\documentclass{article} 
\usepackage[T1]{fontenc}
\usepackage{indentfirst}
\usepackage{amsmath}
\usepackage{amssymb}
\usepackage{amsthm}
\usepackage{cancel}
\usepackage{dsfont}
\usepackage{stmaryrd}
\SetSymbolFont{stmry}{bold}{U}{stmry}{m}{n}
\usepackage{hyperref}
\usepackage{empheq}
\usepackage{cleveref}
\usepackage{bm}
\usepackage{enumitem}
\usepackage[pdftex]{graphicx}
\usepackage{fullpage}

\title{On the existence of a scalar pressure field in the Br\"odinger problem}
\author{Aymeric \textsc{Baradat}\footnote{CMLS, \'Ecole Polytechnique and \'Ecole Normale Sup\'erieure, France. \newline
E-mail: \emph{aymeric.baradat@polytechnique.edu}}}
\date{}

\newcommand{\R}{\mathbb{R}}
\newcommand{\Z}{\mathbb{Z}}
\newcommand{\T}{\mathbb{T}}
\newcommand{\E}{\mathbb{E}}
\newcommand{\N}{\mathbb{N}}

\newcommand{\A}{\mathcal{A}}
\renewcommand{\AA}{\boldsymbol{\mathcal{A}}}
\newcommand{\Fbf}{\boldsymbol{\mathcal{F}}}
\newcommand{\F}{\mathcal{F}}
\renewcommand{\H}{\mathcal{H}}
\newcommand{\Hbar}{\overline{\mathcal{H}}}
\newcommand{\HH}{\boldsymbol{\mathcal{H}}}
\renewcommand{\P}{\mathcal{P}}

\newcommand{\cg}{\langle}
\newcommand{\cd}{\rangle}

\newcommand{\pf}{_\#}
\newcommand{\eps}{\varepsilon}
\newcommand{\I}{\mathcal{I}}
\newcommand{\m}{\mathfrak{m}}
\newcommand{\rhorho}{\boldsymbol{\rho}}

\newcommand{\cc}{\boldsymbol{c}}

\newcommand{\Norm}{\mathsf{N}}

\newcommand{\Bro}{\mathsf{Br\ddot{o}}}

\newcommand{\MBro}{\mathsf{MBr\ddot{o}}}

\theoremstyle{plain}
\newtheorem{Thm}{Theorem}

\newtheorem{Lem}[Thm]{Lemma}

\theoremstyle{definition}
\newtheorem{Def}[Thm]{Definition}
\newtheorem{Rem}[Thm]{Remark}

\newtheorem{Pb}[Thm]{Problem}

\DeclareMathOperator{\Lip}{Lip}
\DeclareMathOperator{\Div}{div}
\DeclareMathOperator{\bDiv}{\bf{div}}

\DeclareMathOperator{\Leb}{Leb}

\DeclareMathOperator{\D}{d\!}
\DeclareMathOperator{\Id}{Id}
\DeclareMathOperator{\Diff}{D \!}

\begin{document} 
\maketitle
\begin{abstract}
In \cite{arn17}, Arnaudon, Cruzeiro, L\'eonard and Zambrini introduced an entropic regularization of the Brenier model for perfect incompressible fluids. We show that as in the original setting, there exists a scalar pressure field which is interpreted as the Lagrange multiplier associated to the incompressibility constraint. The proof goes through a reformulation of the problem in PDE terms.
\end{abstract}
\section*{Introduction}

\paragraph{Motivations.} In the seminal paper \cite{bre89}, Brenier introduces a variational problem - often called Brenier model for incompressible fluids or incompressible optimal transport - aiming at describing the evolution of an incompressible fluid inside a domain $D$ in a Lagrangian way, \emph{i.e.} by prescribing the state of the fluid at the initial and final times, and by minimizing an action functional in the set of dynamics that are incompressible and compatible with the initial and final prescription. This problem is a relaxed version of an older one studied by Arnol'd \cite{arn66} and Arnol'd-Khesin \cite{arn99} in order to derive the Euler equation for incompressible fluids from a least action principle: the problem of finding the geodesics of the formal infinite dimensional Lie group of all measure preserving diffeomorphisms of $D$ whose Lie algebra, which is identified as the set of divergence free vector fields on $D$, is endowed with the $L^2$ metric (see also \cite{ebi70}). 

To be more specific, in Brenier's relaxation, the admissible dynamics are chosen in the set $\P(C^0([0,1]; D))$\footnote{If $\mathcal{X}$ is a polish space, $\P(\mathcal{X})$ stands for the set of Borel probability measures on $\mathcal{X}$.} of \emph{generalized flows} on the physical domain $D$, and the goal is to minimize the functional that associates to a generalized flow $P$ the integral with respect to $P$ of the kinetic action of a single curve, under two constraints: one related to incompressibility, and one prescribing the law under $P$ of the endpoints of the curves. Not only this relaxation always admits solutions (see \cite{bre89}, in contrast with the smooth case \cite{shn87, shn94}), but it is in addition deeply linked to the hydrostatic approximation of the Euler equation \cite{brenier2008generalized}, and to a kinetic version of the Euler equation appearing as the \emph{quasineutral limit} of the Vlasov-Poisson equation \cite{Baradat2018wip}. We also refer non-exhaustively to \cite{bre93,bre99,amb09,ber09} for further studies of the Brenier model.

In this work, we will be interested in the \emph{entropic regularization} of this optimization problem, introduced by Arnaudon, Cruzeiro, L\'eonard and Zambrini \cite{arn17}, which consists in replacing the action functional of the Brenier model by the \emph{relative entropy} with respect to the law of the Brownian motion (all of this being properly defined later). The regularization is hence the same as the one transforming the optimal transport problem into its entropic regularized version, namely the Schr\"odinger problem in statistical mechanics, for which we refer for instance to \cite{sch31,sch32,zambrini1986variational,leonard2012schrodinger,leo13} and to the references therein. For this reason, as it is a mixture of the Brenier model and the Schr\"odinger problem, the authors of \cite{arn17} named this new model the \emph{Br\"odinger problem}. It is natural for at least two reasons. From a theoretical point of view, it links the incompressible optimal transport to the large deviation theory through Sanov's theorem \cite[Theorem~6.2.10]{dem09}. From a numerical point of view, Benamou, Carlier and Nenna \cite{benamou2017generalized} compute approximate solutions of incompressible optimal transport thanks to a time-discrete version of the Br\"odinger problem, using Sinkhorn algorithm \cite{sinkhorn1964relationship,sinkhorn1967diagonal}. Doing so, they extend to the incompressible case the techniques used to compute approximate solutions of the optimal transport problem \cite{cuturi2013sinkhorn,benamou2015iterative}. Let us mention that in \cite{baradat2018small}, written more or less at the same time as the present work, the author and Monsaingeon prove rigorously the convergence of the Br\"odinger problem towards the Brenier model, in the sense of $\Gamma$-convergence of the corresponding functionals. 

\paragraph{The pressure field.} One of the main results in the theory of incompressible optimal transport is the existence of a scalar pressure field only depending on the endpoints conditions (and not on the possibly non-unique solutions) acting as a Lagrange multiplier associated with the incompressibility constraint (see \cite{bre93} or \cite[Section 6]{amb09}), and a lot of attention have been dedicated to the study of its regularity \cite{ambfig08} (see also \cite{santambrogio2018regularity,lavenant2018new} for closely related results) and to its dependence with respect to the endpoints \cite{baradat2019continuous,Baradat2018wip}. 

This pressure field is proved to exist through the envelope theorem: we prove that if the incompressibility constraint is replaced by the prescription of a time dependent smooth density $\rho = \rho(t,x)$, then the optimal action varies at first order as:
\begin{equation*}
\cg p, \rho - 1 \cd,
\end{equation*}
for some scalar distribution $p \in \mathcal{D}'((0,1)\times D)$. In other words, the so-called pressure field $p$ is a distribution appearing as the G\^ateaux-differential of the optimal action when the density changes.

Besides, the pressure field is the relevant quantity when studying the dynamics of the paths charged by the solutions of incompressible optimal transport: it is shown in \cite[Theorem~6.8]{amb09} that if $P$ is a solution of incompressible optimal transport, then $P$ only charges trajectories $\omega$ that minimize for all $\eps \in (0,1/2)$ the classical Lagrangian:
\begin{equation*}
\int_\eps^{1-\eps} \left\{ \frac{|\dot \omega_t|^2}{2} - \overline p (t,\omega_t) \right\} \D t,
\end{equation*}
among the set of curves sharing their positions at times $\eps$ and $1-\eps$ with $\omega$, where $\overline p$ is one particular representative of $p$. All of this makes sense since $p$ is proved in \cite{ambfig08} to belong to some Lebesgue space of type $L^2_{\mbox{\footnotesize loc}}((0,1), L^q_{\mbox{\footnotesize loc}}(D))$ for some $q >1$. Otherwise stated, once the endpoints conditions are given, the incompressible optimal transport problem selects a pressure field, and all the particles follow the laws of classical mechanics corresponding to it.

For these reasons, it seems very natural to ask if it is possible to demonstrate in the Br\"odinger problem the existence of a similar scalar pressure field. The main result of this work is to bring a positive answer to this question. Before stating this result, let us give a precise formulation of the Br\"odinger problem.

\paragraph{Statement of the Br\"odinger problem.} We will work by convenience on the $d$-dimensional flat torus, \emph{i.e.} $D = \T^d := \R^d / \Z^d$. We will denote by $\Leb$ Lebesgue measure on $\T^d$, normalized so that $\Leb(\T^d)=1$, and by $R^\nu \in \P(C^0([0,1]; \T^d))$ the law of the reversible Brownian motion of diffusivity $\nu >0$ between the times $0$ and $1$, that is the Brownian motion whose marginal law at any time is $\Leb$.

Let $\gamma \in \P(\T^d \times \T^d)$. We will say that $\gamma$ is \emph{bistochastic} provided its first and second marginal is $\Leb$, which means that for all measurable $A \subset \T^d$:
\begin{equation*}
\gamma( A \times \T^d ) = \gamma ( \T^d \times A ) = \Leb(A).
\end{equation*}
These bistochastic measures will play the role of the prescribed joint laws at times $t=0$ and $1$ of the competitors in the Br\"odinger problem.

If $\mathcal{X}$ is a Polish space and if $p,r \in \P(\mathcal{X})$, the \emph{relative entropy} (or Kullback-Leibler divergence) of $p$ with respect to $r$ is defined by:\footnote{When it makes sense, we will use indifferently the notations $\int f \D p$ and $\E_p[f]$ for the integral of $f$ with respect to $p$.}
\begin{equation*}
H(p|r) := \left\{ 
\begin{aligned}
&\int \log \left( \frac{\D p}{\D r}\right) \D p = \E_p\left[ \log \left( \frac{\D p}{\D r}\right) \right] &&\mbox{if } p \ll r,\\
&+ \infty &&\mbox{else.}
\end{aligned}
\right.
\end{equation*}
It is well known that $r$ being fixed, $H(\bullet|r)$ is nonnegative, strictly convex, proper and lower semi-continuous with respect to the topology of narrow convergence \cite[Lemma~6.2.12]{dem09}. The Br\"odinger problem is defined as follows.
\begin{Pb}
	\label{pb.Bro}
	Given $\gamma$ bistochastic and $\nu >0$, find a generalized flow $P$ such that:
	\begin{enumerate}[label=(\alph*)]
		\item \label{item.compatible} The flow $P$ is compatible with $\gamma$, \emph{i.e.} for all measurable $A \subset \T^d \times \T^d$:
		\begin{equation}
		\label{eq.compatibility}
		P(\{\omega \mbox{ such that }(\omega_0,\omega_1) \in A \}) = \gamma(A).
		\end{equation}
		\item \label{item.incompressible} The flow $P$ is incompressible \emph{i.e.} for all measurable $A \subset \T^d$:
		\begin{equation}
		\label{eq.incompressibility}
		P(\{\omega \mbox{ such that }\omega_t \in A \}) = \Leb(A).
		\end{equation}
		\item The relative entropy:\footnote{The prefactor $\nu$ is the good scaling constant to get something of order one when $\nu \to 0$, see Theorem \ref{thm.girsanov} below.} 
		\begin{equation*}
		\Hbar_{\nu}(P) := \nu H(P|R^\nu)
		\end{equation*}
		is finite and minimal among the relative entropies of generalized flows satisfying \ref{item.compatible} and \ref{item.incompressible}.
	\end{enumerate}
	
	In the sequel, this problem will be referred to as $\Bro_\nu(\gamma)$. Note that calling $(X_t)_{t \in [0,1]}$ the canonical process on $C^0([0,1]; \T^d)$ and denoting by $\pf$ the \emph{push-forward} operator, condition \ref{item.compatible} can be written $(X_0,X_1)\pf P = \gamma$, and condition \ref{item.incompressible} can be written $\forall t \in [0,1], \, X_t {}\pf P = \Leb$.
\end{Pb}
Concerning the existence of solutions, it is proved in \cite[Corollary~5.2]{arn17} that $\Bro_\nu(\gamma)$ admits a solution if and only if:
\begin{equation}
\label{eq.condition_existence_process}
H(\gamma | \Leb \otimes \Leb) < +\infty.
\end{equation}
In that case, because of the strict convexity of the relative entropy, and in contrast with the incompressible optimal transport case, this solution is unique.

\paragraph{Statement of the main result.} Here is the main theorem we will prove in this work. 
\begin{Thm}
	\label{thm.existence_pressure_process}
	Take $\nu>0$ and $\gamma$ a bistochastic measure satisfying \eqref{eq.condition_existence_process}. Let $P$ be the solution of $\Bro_\nu(\gamma)$. There exists a unique scalar distribution $p \in \mathcal{D}'((0,1) \times \T^d)$ (up to adding a distribution only depending on time) such that for all $\varphi \in \mathcal{D}((0,1)\times \T^d)$ of zero spatial mean at all time, and all generalized flow $Q$ satisfying:
	\begin{itemize}
		\item $(X_0, X_1)\pf Q = \gamma$,
		\item for all $t \in [0,1]$, $X_t {}\pf Q = (1 + \varphi(t, \bullet)) \Leb$,
	\end{itemize}
	then:
	\begin{equation*}
	\Hbar_{\nu}(Q) \geq \Hbar_{\nu}(P) + \cg p, \varphi \cd.
	\end{equation*}
\end{Thm}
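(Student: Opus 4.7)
The strategy is the envelope theorem. Let $\mathcal{C}$ denote the subspace of $\varphi \in \mathcal{D}((0,1)\times\T^d)$ of zero spatial mean at all times, and set
\begin{equation*}
F(\varphi) := \inf\bigl\{ \Hbar_{\nu}(Q) : (X_0,X_1)\pf Q = \gamma \text{ and } X_t\pf Q = (1+\varphi(t,\cdot))\Leb \text{ for all } t \bigr\}.
\end{equation*}
By the definition of $P$, $F(0) = \Hbar_{\nu}(P)$. Convexity of $H(\cdot|R^\nu)$ together with the affine dependence in $Q$ of both the endpoint and the marginal constraints yields convexity of $F$ on $\mathcal{C}$. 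The pressure will be obtained as a subgradient of $F$ at $0$: any linear form $p$ on $\mathcal{C}$ with $F(\varphi) \geq F(0) + \cg p, \varphi \cd$ extends arbitrarily (say, by zero on functions of time only) to a distribution on $\mathcal{D}((0,1)\times\T^d)$, and uniqueness up to a distribution depending only on time is immediate because precisely such distributions annihilate $\mathcal{C}$.

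Extracting such a subgradient via Hahn--Banach requires $F$ to be bounded above on a neighborhood of $0$ for the Fr\'echet topology of $\mathcal{D}$. This is the main analytic content of the theorem and is where the PDE reformulation advertised in the abstract enters. Writing $\rho_t := X_t\pf Q$ and letting $v_t$ be the current velocity of $Q$, the pair $(\rho, v)$ satisfies the Fokker--Planck equation
\begin{equation*}
\partial_t \rho + \bDiv(\rho v) - \tfrac{\nu}{2}\Delta \rho = 0,
\end{equation*}
and a Girsanov-type identity rewrites $\Hbar_{\nu}(Q)$ as an endpoint contribution depending only on $\gamma$ plus a jointly convex functional of $(\rho, v)$ of kinetic plus Fisher-information type. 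Computing $F(\varphi)$ thus reduces to a convex minimization in $(\rho, v)$ under linear PDE and marginal constraints, with $\varphi$ entering only through $\rho_t = 1+\varphi(t,\cdot)$.

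For small $\varphi \in \mathcal{C}$ one then constructs a competitor by perturbing the current velocity $v$ of $P$ additively by a gradient $w_t := \nabla \psi_t$, where $\psi_t$ solves a Poisson-type equation on $\T^d$ whose solvability is ensured by the zero-mean condition on $\varphi$. The pair $(1+\varphi, v + w)$ satisfies the perturbed Fokker--Planck equation to leading order, and a fixed-point iteration absorbs the quadratic remainders. A Girsanov estimate then bounds the associated entropy gain by $\int_0^1 \int_{\T^d} |w|^2 \, \D x \, \D t$, hence by a Sobolev seminorm of $\varphi$, which is exactly the local upper bound on $F$ that Hahn--Banach requires.

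The most delicate step, I expect, is to realize this PDE-level construction as a \emph{bona fide} generalized flow satisfying \emph{exactly} the prescribed endpoint coupling $\gamma$, rather than merely an approximation: an additive drift perturbation generically shifts the joint law of $(X_0,X_1)$, so one must carefully glue the perturbation to the unperturbed Br\"odinger dynamics near $t = 0, 1$ (using the compact support of $\varphi$) and then invoke a superposition-principle lift \`a la Ambrosio--Trevisan to produce the required generalized flow. Once this is in place, convexity of $F$ together with the quadratic upper bound delivers the pressure $p$ as a subgradient at $0$ by Hahn--Banach, completing the proof.
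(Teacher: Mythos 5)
Your skeleton (a value function $F(\varphi)$, convexity, boundedness above near $0$, a subgradient playing the role of the pressure) is indeed the strategy of the paper, but the central analytic step is wrong as you set it up. You reduce $F(\varphi)$ to a convex minimization over a single aggregate pair $(\rho,v)$ with $\rho_t=1+\varphi(t,\cdot)$, claiming a Girsanov-type identity writes $\Hbar_\nu(Q)$ as an endpoint term depending only on $\gamma$ plus a kinetic-plus-Fisher functional of $(\rho,v)$. That is not an identity (the Benamou--Brenier-type expression is only a lower bound, with equality requiring Markovianity), and, more seriously, the aggregate pair retains no trace of the coupling constraint $(X_0,X_1)\pf Q=\gamma$: once $\rho$ is prescribed to be $1+\varphi$, your reduced problem has forgotten $\gamma$ altogether. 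This is precisely why the paper passes to the multiphase problem $\MBro_\nu$: one disintegrates $P$ into $P^{x,y}=P(\cdot\,|\,X_0=x,X_1=y)$ and keeps, for $\gamma$-almost every $(x,y)$, its own density $\rho^{x,y}$ and current velocity $c^{x,y}$ with its own endpoint data, all phases being coupled only through the density constraint; the boundedness of the value function is then obtained by deforming \emph{all phases simultaneously} with the Dacorogna--Moser maps $\phi,\psi$ sending $\Leb$ to $(1+\varphi)\Leb$. Your additive perturbation $v\mapsto v+\nabla\psi_t$ of the aggregate current velocity, with entropy gain bounded by $\int|w|^2$ and hence by a Sobolev seminorm of $\varphi$, also misses the dominant cost: deforming each phase changes its Fisher information, whose control requires two spatial derivatives of the deformation (hence the $W^{2,\infty}$-type norm $\Norm$ and the estimate $\Norm(\xi)+\Norm(\zeta)\lesssim\Norm(\varphi)$ in the paper), and no Girsanov bound on an aggregate drift controls $\int \H_\nu(\rho^{x,y},c^{x,y})\,\D\gamma(x,y)$.

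Two further gaps. First, the lift back to a genuine generalized flow with exact coupling: your ``glue near $t=0,1$ plus a superposition principle'' is the right flavor (it is Lemma \ref{lem.noisy_superposition_principle} and the concatenation in Theorem \ref{thm.lien_Bro_MBro}), but one also needs the restriction to $[\eps,1-\eps]$ together with the Brownian-bridge lemma (the factors $f^{\nu,x,y}_\eps$, $g^{\nu,x,y}_\eps$ bounded away from zero) to guarantee finite entropy of the conditioned laws and to convert path entropies into per-phase $\H_\nu$ with matching endpoint corrections; this two-sided accounting is what yields the key comparison \eqref{eq.comparison_entropies_P_Q}, and nothing in your sketch produces it. Second, uniqueness: convexity plus local boundedness gives a nonempty subdifferential, not a singleton, and observing that time-only distributions annihilate your space $\mathcal{C}$ only shows the equivalence class of a given $p$ is well defined, not that all admissible $p$ lie in one class. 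The paper proves uniqueness by showing that \emph{every} subgradient satisfies the momentum balance \eqref{eq.formule_p_MBro}, which determines $\nabla p$ and hence $p$ up to a distribution of time only; some argument of this kind is needed and is absent from your proposal.
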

In fact, we will not prove this theorem directly, but we will rather state and prove a corresponding result in terms of PDEs, and then prove that these two results are indeed equivalent. The reason is that it is not so easy to modify the density of a generalized flow with finite entropy with respect to $R^\nu$ without losing this finite entropy condition (and hence to get estimates on the optimal entropy when the density changes). So let us now present the PDE framework.

\paragraph{The multiphase formulation of Br\"odinger.} As most of the models in the theory of optimal transport, the Br\"odinger problem comes with a Benamou-Brenier version (see \cite{ben00} for the classical case), \emph{i.e.} in terms of solutions to the continuity equation with prescribed initial and final densities. Here, as it was done by Brenier in the incompressible optimal transport case \cite{bre99} (see also \cite[Section~3 and 4]{amb09}), we will work with several phases of fluid described by their densities and macroscopic velocities denoted by $\rho^i = \rho^i(t,x)$ and $c^i(t,x)$ respectively, where $i$ belongs to a probability space of labels $(\I, \m)$. These phases will be coupled by an incompressibility constraint, meaning that for all $t \in [0,1]$:
\begin{equation*}
\int \rho^i(t, \bullet) \D \m(i) = \Leb.
\end{equation*}

As already observed by numerous papers \cite{chen2016relation,gentil2017analogy,gigli2018benamou}, the quantity to minimize in the entropic regularized framework is the kinetic action plus a penalizing term corresponding to the integral in time of the Fischer information. So we will work with solutions of the continuity equation with an additional regularity property, accordingly with the following definition.

\begin{Def}
	We say that the couple of density and velocity $(\rho, c)$ is a solution of the continuity equation provided:
\begin{itemize}
	\item  the density $\rho \in C^0([0,1]; \P(\T^d))$ and $c = c(t,x)$ is a measurable vector field with the following integrability:
		\begin{equation}
		\label{eq.def_action}
		\A(\rho, c) := \frac{1}{2}\int_0^1 \hspace{-5pt} \int |c(t,x)|^2 \rho(t, \D x) \D t < + \infty,
		\end{equation}
		\item the continuity equation:
		\begin{equation*}
		\partial_t \rho + \Div (\rho c) = 0
		\end{equation*}
		holds in a distributional sense.
\end{itemize}
We say in addition that this solution has finite Fischer information if for almost all time, $\rho$ has a density with respect to $\Leb$ that satisfies $\sqrt \rho \in L^2(0,1;H^1(\T^d))$. In that case, we write in the same way $\rho$ an its density, and we define $\nabla \log \rho$ through the identity:
\begin{equation*}
\frac{1}{2}\nabla \log \rho := \frac{\nabla \sqrt \rho}{\sqrt\rho},
\end{equation*}
which is well defined $\D t \otimes \rho$ almost everywhere. With this notation, $\sqrt \rho \in L^2(0,1;H^1(\T^d))$ translates into:
\begin{equation}
\label{eq.def_Fischer}
\F(\rho) := \frac{1}{2}\int_0^1 \hspace{-5pt} \int \left| \frac{1}{2} \nabla \log \rho (t,x)\right|^2 \rho(t, x) \D x \D t < + \infty.
\end{equation}
If $\sqrt \rho \notin L^2(0,1;H^1(\T^d))$, we just set $\F(\rho) := + \infty$.

Finally, we also define the following functional which will be seen as the Benamou-Brenier counterpart of the relative entropy with respect to the Brownian motion:
\begin{equation}
\label{eq.def_Hnu}
\H_\nu(\rho, c) := \frac{1}{2} \int_0^1 \hspace{-5pt} \int \left\{ |c(t,x)|^2 + \left| \frac{\nu}{2}\nabla \log \rho(t,x) \right|^2  \right\} \rho(t,x) \D x \D t = \A(\rho,c) + \nu^2 \F(\rho).
\end{equation}
\end{Def}

The multiphasic Br\"odinger problem is defined as follows.

\begin{Pb}
	\label{pb.Mbro}
	Given $\nu>0$, $(\I,\m)$ a probability space and $\rhorho_0 = (\rho_0^i)_{i \in \I}$,$\rhorho_1 = (\rho_1^i)_{i \in \I}$ two measurable families of probability measures on $\T^d$ satisfying:
	\begin{equation}
	\label{eq.incompressibility_t01}
		\int \rho^i_0 \D \m(i) =\int \rho^i_1 \D \m(i)= \Leb,
	\end{equation}
	find $(\rhorho, \cc)=(\rho^i, c^i)_{i\in \I}$ a measurable family of solutions of the continuity equation with finite Fischer information, well defined for $\m$-almost every $i$, such that:
		\begin{enumerate}[label=(\alph*)]
		\item \label{item.Mcompatibility} For $\m$-almost all $i \in \I$:
		\begin{equation*}
		\rho^i|_{t=0} = \rho^i_0 \qquad \mbox{and}\qquad \rho^i|_{t=1} = \rho^i_1.
		\end{equation*}
		\item \label{item.Mincompressibility} For all $t \in [0,1]$, we have:
		\begin{equation*}
		\int \rho^i(t,\bullet) \D \m(i) = \Leb.
		\end{equation*}
		\item The functional:
		\begin{equation}
		\label{eq.def_entropy_multiphase}
		\HH_\nu(\rhorho, \cc) :=  \int \H_\nu(\rho^i, c^i) \D \m(i) =: \AA(\rhorho, \cc) + \nu^2 \Fbf(\rhorho)
		\end{equation}
		(where $\AA(\rhorho)$ and $\Fbf(\rhorho)$ are the integrals with respect to $\m$ of $\A(\rho^i, c^i)$ and $\F(\rho^i)$ respectively) is finite and minimal among the measurable families of distributional solutions of the continuity equation satisfying \ref{item.Mcompatibility} and \ref{item.Mincompressibility}.
	\end{enumerate}
In the following, this problem will be called $\MBro_\nu(\rhorho_0, \rhorho_1)$.
\end{Pb}

As far as we know, this work and \cite{baradat2018small} are the first articles dealing with this optimization problem (even though in \cite{baradat2018small}, it is formulated using the notion of transport plans). A consequence of the analysis made in~\cite{baradat2018small} (see Theorem A.1) is that the existence of solution holds for this problem if and only if the initial and finite total entropies are finite, namely if and only if for $\m$-almost all $i$, $\rho^i_0$ and $\rho^i_1$ have densities with respect to $\Leb$, and:
\begin{equation}
\label{eq.CNS_existence_multiphase}
\int \hspace{-5pt}\int \rho^i_0(x) \log \rho^i_0(x) \D x \D \m(i) < + \infty \quad \mbox{and} \quad \int \hspace{-5pt}\int \rho^i_1(x) \log \rho^i_1(x) \D x \D \m(i) < +\infty.
\end{equation}
Once again, in that case, the solution is unique due to the strict convexity of $\H_\nu$ with respect to the variable $(\rho^i, \rho^ic^i)_{i \in \I}$.

As we already said, we will first prove a theorem corresponding to Theorem \ref{thm.existence_pressure_process} in the multiphasic setting, namely:
\begin{Thm}
	\label{thm.existence_pressure_MBro}
	Take $(\I, \m)$ a probability space, $\nu>0$, $\rhorho_0$ and $\rhorho_1$ satisfying \eqref{eq.incompressibility_t01}\eqref{eq.CNS_existence_multiphase}. Let $(\rhorho, \cc) = (\rho^i, c^i)_{i \in \I}$ be the solution of $\MBro_\nu(\rhorho_0, \rhorho_1)$.
	
There exists a unique scalar distribution $p \in \mathcal{D}'((0,1) \times \T^d)$ (up to adding a distribution only depending on time) such that for all $\varphi \in \mathcal{D}((0,1)\times \T^d)$ of zero spatial mean at all time, and for all measurable families $(\tilde{\rhorho}, \tilde{\cc}) = (\tilde{\rho}^i, \tilde{c}^i)_{i \in \I}$ of solutions of the continuity equation satisfying:
	\begin{enumerate}[label=(\roman*)]
		\item \label{item.t01} for $\m$-almost all $i$, $\tilde{\rho}^i|_{t = 0} = \rho^i_0$ et $\tilde{\rho}^i|_{t = 1} = \rho^i_1$,
		\item \label{item.density_varphi} for all $t \in [0,1]$, 
		\begin{equation*}
		\int \tilde{\rho}^i_t \D \m(i) = (1 + \varphi(t, \bullet)) \Leb,
		\end{equation*}
	\end{enumerate}
	Then:
	\begin{equation*}
	\HH_{\nu}(\tilde{\rhorho}, \tilde{\cc}) \geq \HH_{\nu}(\rhorho, \cc) + \cg p, \varphi \cd.
	\end{equation*}
	
	Moreover, the following formula holds in the sense of distribution:
	\begin{equation}
	\label{eq.formule_p_MBro}
	\partial_t \left( \int \rho^i c^i \D \m(i)\right) + \bDiv \left( \int \Big\{ c^i \otimes c^i -  w^i\otimes w^i \Big\}\rho^i \D \m(i) \right) = - \nabla p,
	\end{equation}
	where $w^i$, called the osmotic velocity of phase $i$, is defined by:
	\begin{equation*}
	w^i = \frac{\nu}{2} \nabla \log \rho^i.
	\end{equation*}
\end{Thm}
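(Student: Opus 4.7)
The plan is to produce $p$ as a subgradient at the origin of the value function
\[
V(\varphi) := \inf\bigl\{ \HH_\nu(\tilde{\rhorho}, \tilde{\cc}) : (\tilde{\rhorho}, \tilde{\cc}) \text{ satisfies \ref{item.t01} and \ref{item.density_varphi}} \bigr\},
\]
defined on the space $\mathcal{D}_0 \subset \mathcal{D}((0,1) \times \T^d)$ of test functions of zero spatial mean at every time. The convexity of $\HH_\nu$ in the variables $(\rho^i, \rho^i c^i)_{i \in \I}$, together with the affinity of both constraints in $(\tilde{\rhorho}, \tilde{\cc})$, yields convexity of $V$; optimality of $(\rhorho, \cc)$ for $\MBro_\nu(\rhorho_0, \rhorho_1)$ gives $V(0) = \HH_\nu(\rhorho, \cc) < +\infty$; and $V \geq 0$. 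If $V$ can further be shown bounded above on a neighborhood of $0$ for some locally convex topology on $\mathcal{D}_0$, a Hahn--Banach argument produces a non\-empty subdifferential $\partial V(0)$, and any $p \in \partial V(0)$ is precisely a distribution satisfying the inequality claimed in the theorem. Uniqueness of $p$ modulo a distribution depending only on time is then the usual de Rham-type fact that any $q \in \mathcal{D}'((0,1) \times \T^d)$ pairing trivially with every element of $\mathcal{D}_0$ must depend only on $t$.

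The main technical obstacle is this local upper bound. To obtain it, given $\varphi \in \mathcal{D}_0$ sufficiently small in some $C^k$ norm, I would build an explicit competitor by constructing a smooth family $(\Phi^\varphi_t)_{t \in [0,1]}$ of diffeomorphisms of $\T^d$ satisfying $(\Phi^\varphi_t)_\# \Leb = (1+\varphi(t,\cdot))\Leb$ and $\Phi^\varphi_t = \Id$ for $t$ near $0$ and $1$, which is possible since $\varphi$ is compactly supported in $(0,1)$ and of zero spatial mean. Setting $\tilde\rho^i_t := (\Phi^\varphi_t)_\# \rho^i_t$ and adjusting $\tilde c^i_t$ by the corresponding pushforward formula preserves the continuity equation phase-by-phase. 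The action term is then controlled by $\AA(\rhorho, \cc)$ plus quantities depending continuously on $\varphi$, while the Fischer information, via the change-of-variables identity
\[
\nabla \log \tilde\rho^i_t = (D\Phi^\varphi_t)^{-T}\bigl(\nabla \log \rho^i_t - \nabla \log |\det D\Phi^\varphi_t|\bigr) \circ (\Phi^\varphi_t)^{-1},
\]
is bounded by $(1 + o(1)) \Fbf(\rhorho) + O(\|\varphi\|_{C^k}^2)$, the $o(1)$ coming from the Jacobian and the remainder from the logarithmic derivative of $\det D\Phi^\varphi_t$. This is the delicate step, because the estimate must hold even though the individual $\rho^i_t$ are only known to have integrable Fischer information, not pointwise smoothness.

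Once $p$ is obtained, formula \eqref{eq.formule_p_MBro} follows by specializing the subgradient inequality to perturbations generated by a smooth vector field. Given $u \in \mathcal{D}((0,1) \times \T^d; \R^d)$, let $\Phi^s_t$ be the flow at parameter $s$ of $u(t, \cdot)$, define $\tilde\rho^{i,s}_t := (\Phi^s_t)_\# \rho^i_t$ and adjust $\tilde c^{i,s}_t$ to preserve the continuity equation; the associated density perturbation is $\varphi_s = -s\,\Div u + O(s^2)$, and endpoints are preserved since $u$ has compact support in $(0,1)$. Applying the subgradient inequality to both $+s$ and $-s$ and letting $s \to 0$ yields
\[
\frac{\D}{\D s}\bigg|_{s=0} \HH_\nu(\tilde\rhorho^s, \tilde\cc^s) = -\cg p, \Div u \cd = \cg \nabla p, u \cd.
\]
The left-hand side can be computed phase-by-phase: the action contribution, after integration by parts in space and time, produces the pairing of $u$ with $\partial_t\bigl(\int \rho^i c^i \D \m(i)\bigr) + \bDiv\bigl(\int c^i \otimes c^i \rho^i \D \m(i)\bigr)$, while the Fischer information contribution, using $\tfrac{\nu}{2}\nabla\log\rho^i = w^i$ and the standard variation formula for Fischer information under pushforward by a near-identity diffeomorphism, produces the pairing of $u$ with $-\bDiv\bigl(\int w^i \otimes w^i \rho^i \D \m(i)\bigr)$. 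Identifying the two sides gives \eqref{eq.formule_p_MBro}.
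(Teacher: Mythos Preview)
Your proposal is correct and follows essentially the same strategy as the paper: define the value function on perturbations of the density constraint, establish its convexity, prove a local upper bound by pushing the optimal solution forward through a family of near-identity diffeomorphisms sending $\Leb$ to $(1+\varphi)\Leb$, extract $p$ as a subgradient at the origin, and identify the formula \eqref{eq.formule_p_MBro} by specializing to perturbations $\varphi^\eps = \eps\,\Div\xi + o(\eps)$ generated by a smooth vector field. The only notable differences are cosmetic: the paper works on a slightly larger space $\mathcal{E}_0$ with an explicit norm $\Norm$ controlling one time derivative in $L^2_t L^\infty_x$ and two space derivatives in $L^\infty$, invokes the Dacorogna--Moser theorem to build the diffeomorphisms, and establishes uniqueness by showing directly that \eqref{eq.formule_p_MBro} pins down $\nabla p$ (hence the subdifferential is a singleton), whereas you phrase uniqueness via a de~Rham-type remark; also, in the Fischer-information variation the paper records an extra term $\nabla\Div\bigl(\tfrac{\nu}{2}\int w^i\rho^i\,\D\m(i)\bigr)$ that vanishes thanks to incompressibility, a cancellation you absorb silently.
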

This time, the proof will more or less follow the one given in \cite[Theorem~6.2]{amb09}: We will prove that the optimal $\HH_\nu$ under constraints \ref{item.t01}\ref{item.density_varphi} is a convex function of $\varphi$ (Lemma \ref{lem.Hnustar_convex}), and that this function is bounded in some distributional neighborhood of $0$ (Lemma \ref{lem.Hnu_bounded}). We finally prove that its subdifferential at $0$ is a singleton, characterized by formula \eqref{eq.formule_p_MBro} (Lemma \ref{lem.charact_p}). (For the link between G\^ateaux-differentiability and subdifferential of a convex function, we refer to \cite{ekeland1999convex}.)

We will then transfer this result into Problem \ref{pb.Bro} by showing that every solution $P$ of the Br\"odinger problem gives rise to a solution to the multiphase version. We will be more precise in Section \ref{sec.lien_Bro_MBro} below, but what we will prove is that if $P$ is a solution of $\Bro_\nu(\gamma)$, then calling $P^{x,y} := P(\bullet | X_0 = x, X_1 = y)$, $\rho^{x,y}$ its density and $c^{x,y}$ its \emph{current velocity} (in the sense of \cite[Chapter 13]{nel67}), then up to localizing in time, $(\rho^{x,y}, c^{x,y})_{(x,y) \in \T^d \times \T^d}$ is a solution of the multiphase version of Br\"odinger, with $\I = \T^d \times \T^d$ and $\m = \gamma$.

\paragraph{Outline of the paper.} In Section \ref{sec.notations_chap5}, we introduce some notations, and give some preliminary results. In particular, we present in Subsection \ref{subsec.functional_spaces} the functional spaces we will work with in the rest of the paper. We also state a version of the Girsanov theorem which will be useful for showing the link between $\Bro$ and $\MBro$.

We prove Theorem \ref{thm.existence_pressure_MBro} in Section \ref{sec.existence_MBro}. It will be a consequence of Lemma \ref{lem.Hnustar_convex}, \ref{lem.Hnu_bounded} and \ref{lem.charact_p} that we already discussed.

Then, we develop the link between the problems $\Bro$ and $\MBro$, rigorously stated at Theorem \ref{thm.lien_Bro_MBro} of Section \ref{sec.lien_Bro_MBro}.

The proof of Theorem \ref{thm.existence_pressure_process}, given in Section \ref{sec.existence_Bro}, is a consequence of Theorem~\ref{thm.existence_pressure_MBro}, Theorem~\ref{thm.lien_Bro_MBro} and their respective proofs.

Finally, we give in Section \ref{section.formal_computations} a formal way to recover formula \eqref{eq.formule_p_MBro}, assuming by analogy with the non-viscous case that each phase is the solution of the Schr\"odinger problem corresponding to its endpoints, in the potential given by the pressure field. We do not prove that this condition is always verified, but it should be the "noisy" version of \cite[Theorem~6.8]{amb09} that we have already cited.

\section{Notations and preliminary results}
\label{sec.notations_chap5}

\subsection{Functional spaces of interest}
\label{subsec.functional_spaces}
In the whole chapter, if $\mathcal{B}$ is a Banach space, we denote by $\mathcal{B}'$ its topological dual. Two functional sets will be of particular interest.
\begin{enumerate}
	\item We will often consider the set $C^0([0,1], \P(\T^d))$ of curves on the set of probability measures on the torus. We endow it with the topology of uniform convergence corresponding to the topology of narrow convergence on $\P(\T^d)$. We write $M = (M_t)_{t \in [0,1]} \in C_0^0([0,1], \P(\T^d))$ whenever $M$ belongs to $C^0([0,1], \P(\T^d))$ and $M_0 = M_1 = \Leb$, the Lebesgue measure on $\T^d$.
	\item The set $\mathcal{E}$ will be the vector space of continuous scalar functions $f$ that satisfy 
	\begin{itemize}
		\item for all $t \in [0,1]$, $f(0,\bullet) = f(1,\bullet) = 0$, 
		\item for all $t \in [0,1]$, $f(t,\bullet) \in W^{2, \infty}(\T^d)$ and the Hessian $\mathrm{D}^2 f$ of $f$ satisfies:
		\[
		\sup_{t \in [0,1]} \| \mathrm{D}^2 f(t, \bullet) \|_{L_x^{\infty}} < + \infty,
		\]
		\item for all $x \in \T^d$, $f(\bullet , x) \in AC^2([0,1])$ and the temporal derivative $\partial_t f$, which is punctually defined for almost all $t \in [0,1]$ for almost all $x \in \T^d$, satisfies
		\[
		\int_0^1 \| \partial_t f(t) \|_{L^{\infty}(\T^d)}^2 \D t < + \infty.
		\]
	\end{itemize}
	On $\mathcal{E}$, we define the norm
	\begin{equation}
	\label{eq.def_N_Chap5}
	\forall f \in \mathcal{E}, \quad \Norm(f) := \sup_{t \in [0,1]} \| \mathrm{D}^2 f(t) \|_{L_x^{\infty}} + \left( \int_0^1 \| \partial_t f(t) \|_{L^{\infty}(\T^d)}^2 \D t \right)^{1/2}.
	\end{equation}
	
	If in addition, for all $t \in [0,1]$, $\int f(t, x) \D x = 0$, we write $f \in \mathcal{E}_0$. 
	
	In a slightly abusive way, we keep the same notations if $f$ has its values in $\R^d$.
	
	Remark that with these notations identifying a measure with its density with respect to $\Leb$, we have $C^0_0([0,1]; \P(\T^d)) \cap \mathcal{E} \subset 1 + \mathcal{E}_0$.
	
	In the proof of Theorem \ref{thm.existence_pressure_MBro}, the variations of densities ($\varphi$ in the statement of the theorem) will be studied in the topology of $\mathcal{E}_0$.
\end{enumerate}

\subsection{Preliminary results}
In order to get estimates on the optimal $\HH_\nu$ when the density varies, we will need to use maps that send $\Leb$ onto $(1+\varphi)\Leb$, for which we control enough derivatives (two with respect to space, one with respect to time). This will be possible thanks to the following theorem, which is a direct consequence of a famous result by Dacorogna and Moser \cite{dac90} in which the Monge-Amp\'ere equation is studied as a perturbation of the Poisson equation. We also refer to the Appendix of \cite{baradat2019continuous} for a short proof of this kind of result in the easy case when the domain is the torus.
\begin{Thm}
	\label{dacmos_chap5}
	Let $\varphi \in \mathcal{E}_0$ be such that $\Norm(\varphi) \leq 1/2$. There exists a dimensional constant $C>0$, $\xi = \xi(t,x) \in \R^d$ and $\zeta = \zeta(t,x) \in \R^d$ two vector fields of $\mathcal{E}_0$ such that:
	\begin{itemize}
		\item for all $t \in [0,1]$, $\phi(t, \bullet) := \Id + \xi(t, \bullet)$ and $\psi(t, \bullet) := \Id + \zeta(t, \bullet)$ are two diffeomorphisms of $\T^d$, which are inverses of each other,
		\item for all $t \in [0,1]$, 
		\begin{equation*}
		\phi(t, \bullet) \pf \Big( (1 + \varphi(t, \bullet))\Leb \Big) = \Leb,
		\end{equation*}
		or equivalently, 
		\begin{equation}
		\label{eq.pf_psi}
		\psi(t, \bullet) \pf \Leb = (1 + \varphi(t, \bullet)) \Leb,
		\end{equation}
		\item we have:
		\begin{equation}
		\label{eq.estim_xi_zeta}
		\Norm(\xi) + \Norm(\zeta) \leq C \Norm(\varphi). 
		\end{equation}
	\end{itemize}
\end{Thm}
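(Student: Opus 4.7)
The plan is to translate the pushforward condition into the Monge-Amp\`ere equation
\[
\det(\Id + \mathrm{D}\xi(t,\cdot)) = 1 + \varphi(t,\cdot)
\]
via the change of variables formula, and then solve it by the Dacorogna-Moser method pointwise in $t$, while carefully propagating the time regularity of $\varphi$ to $\xi$. The condition $\phi\pf((1+\varphi)\Leb) = \Leb$ with $\phi = \Id + \xi$ is equivalent to this Monge-Amp\`ere equation provided $\mathrm{D}\phi$ remains uniformly invertible, which the smallness assumption $\Norm(\varphi) \le 1/2$ will guarantee once $\xi$ is small in $C^1$.

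For the spatial construction, I would look for $\xi$ in gradient form, $\xi(t,\cdot) = \nabla u(t,\cdot)$ with $u(t,\cdot)$ of zero spatial mean. Expanding the determinant yields $\Delta u = \varphi - Q(\mathrm{D}^2 u)$, where $Q$ collects the quadratic-and-higher terms. On $\T^d$ the Laplacian is invertible on mean-zero functions, and Schauder-type elliptic estimates in a H\"older scale slightly above $L^\infty$ produce a bound of the form $\|\mathrm{D}^3 u\|_{L^\infty} \lesssim \|\mathrm{D}^2\varphi\|_{L^\infty}$ (modulo lower-order terms controlled by Poincar\'e inequalities on $\T^d$). For $\Norm(\varphi) \le 1/2$, a contraction argument in a small ball $\{u : \|\mathrm{D}^3 u\|_{L^\infty} \le K\}$ yields, for every $t \in [0,1]$, a unique solution $u(t,\cdot)$, and hence the spatial estimate $\sup_t \|\mathrm{D}^2 \xi(t,\cdot)\|_{L^\infty} \le C \sup_t \|\mathrm{D}^2\varphi(t,\cdot)\|_{L^\infty}$. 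The boundary condition $\xi(0,\cdot) = \xi(1,\cdot) = 0$ is then inherited from $\varphi$ via uniqueness of the fixed point.

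For the temporal half of the $\Norm$ estimate, I would differentiate the Monge-Amp\`ere equation in $t$, obtaining
\[
(1+\varphi)\,\mathrm{tr}\bigl((\Id + \mathrm{D}\xi)^{-1}\mathrm{D}\,\partial_t\xi\bigr) = \partial_t\varphi.
\]
This is a linear second-order equation for $\partial_t u$ with coefficients already controlled by the spatial step, and a matching elliptic estimate delivers $\|\partial_t\xi(t,\cdot)\|_{L^\infty} \lesssim \|\partial_t\varphi(t,\cdot)\|_{L^\infty}$ for almost every $t$; squaring and integrating in $t$ yields the $L^2_tL^\infty_x$ contribution to $\Norm(\xi) \lesssim \Norm(\varphi)$. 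The diffeomorphism $\psi = \Id + \zeta$ is then recovered by inverting $\phi$ through the inverse function theorem, applicable because $\|\mathrm{D}\xi\|_{L^\infty} \ll 1$, and the chain rule transports both the spatial and temporal estimates from $\xi$ to $\zeta$.

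The main obstacle I foresee is the regularity mismatch between the two components of $\Norm$: the norm asks for $L^\infty$ control of the spatial Hessian, yet the Laplacian is not bounded from $L^\infty$ into $W^{2,\infty}$. This forces one to work at a H\"older exponent (as in the original Dacorogna-Moser argument) or to exploit the Fourier-multiplier structure available on $\T^d$ to set up a compatible elliptic estimate, and then check that this scale is preserved both by the nonlinear fixed point producing $\xi$ and by the linearized equation governing $\partial_t\xi$. Once such a scale is chosen, the rest of the argument is routine and should closely follow the appendix of \cite{baradat2019continuous} mentioned in the statement.
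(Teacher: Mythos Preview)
The paper does not actually prove this theorem: it states it as ``a direct consequence of a famous result by Dacorogna and Moser'' and refers to the Appendix of \cite{baradat2019continuous} for a short proof on the torus. Your sketch is precisely the Dacorogna--Moser strategy (Monge--Amp\`ere written as a perturbation of the Poisson equation, gradient ansatz, fixed point, then time-differentiation and inversion), so you are aligned with what the paper invokes; your identification of the $L^\infty\to W^{2,\infty}$ obstruction and the need to pass through a H\"older or Fourier-multiplier scale is exactly the point that the cited appendix addresses.
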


We will also need the following distributional representation of the elements of $\mathcal{E}_0'$, that we state here without a proof.
\begin{Lem}
	\label{E'distribution}
	Take $\alpha\in\mathcal{E}_0'$. There is a unique distributional gradient $F(\alpha) \in \nabla \mathcal{D}'((0,1) \times \T^d)$ such that for all $\varphi \in \mathcal{D}((0,1) \times \T^d; \R^d)$,
	\[
	\cg \alpha,  \Div \varphi \cd_{\mathcal{E}_0', \mathcal{E}_0} = - \cg F(\alpha), \varphi \cd_{\mathcal{D}', \mathcal{D}}.
	\]
	Moreover, $F: \mathcal{E}_0' \mapsto \nabla \mathcal{D}'$ is a continuous injection. In the following, we simply call $\nabla$ this operator. 
\end{Lem}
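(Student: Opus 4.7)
The plan is to construct $F(\alpha)$ explicitly via the prescribed formula, identify it as the spatial gradient of a concrete scalar distribution, and finally verify continuity and injectivity. For any test vector field $\varphi\in\mathcal{D}((0,1)\times\T^d;\R^d)$, the function $\Div\varphi$ is smooth and compactly supported in $(0,1)\times\T^d$, and its spatial average vanishes for every $t$ by the divergence theorem on the torus. Thus $\Div\varphi\in\mathcal{E}_0$, and I set
\[
\cg F(\alpha),\varphi\cd_{\mathcal{D}',\mathcal{D}}:=-\cg\alpha,\Div\varphi\cd_{\mathcal{E}_0',\mathcal{E}_0}.
\]
Since $\varphi\mapsto\Div\varphi$ is continuous from $\mathcal{D}(K;\R^d)$ into $\mathcal{E}_0$ for each compact $K\subset(0,1)\times\T^d$, this defines a vector-valued distribution $F(\alpha)\in\mathcal{D}'$. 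Uniqueness of $F(\alpha)$ as a distribution is built into the defining identity, and weak-$\ast$ continuity of $\alpha\mapsto F(\alpha)$ is immediate because, for each $\varphi$, the test element $\Div\varphi$ is a fixed member of $\mathcal{E}_0$ as $\alpha$ varies.

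To see that $F(\alpha)\in\nabla\mathcal{D}'$, I exhibit an explicit scalar potential. For $\chi\in\mathcal{D}((0,1)\times\T^d)$, write $\bar\chi(t):=\int_{\T^d}\chi(t,x)\D x$ and observe that the demeaned function $\chi-\bar\chi$ belongs to $\mathcal{E}_0$ and depends continuously on $\chi$. Setting
\[
\cg q,\chi\cd:=\cg\alpha,\chi-\bar\chi\cd_{\mathcal{E}_0',\mathcal{E}_0}
\]
defines a scalar distribution $q\in\mathcal{D}'((0,1)\times\T^d)$. Because $\overline{\Div\varphi}\equiv 0$ on the torus, one computes $\cg\nabla_x q,\varphi\cd=-\cg q,\Div\varphi\cd=-\cg\alpha,\Div\varphi\cd=\cg F(\alpha),\varphi\cd$, so $F(\alpha)=\nabla_x q$ as claimed.

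It remains to establish injectivity. Assume $F(\alpha)=0$, so $\cg\alpha,\Div\varphi\cd=0$ for every $\varphi\in\mathcal{D}((0,1)\times\T^d;\R^d)$. Any smooth, compactly supported, zero-mean function $h\in\mathcal{D}_0:=\{h\in\mathcal{D}:\bar h\equiv 0\}$ can be written as $\Div\nabla_x u$, where $u(t,\cdot)$ is the zero-mean solution of $\Delta_x u(t,\cdot)=h(t,\cdot)$; this $u$ is smooth and has the same temporal compact support as $h$, so $\nabla_x u\in\mathcal{D}((0,1)\times\T^d;\R^d)$. Hence $\alpha$ annihilates $\mathcal{D}_0$, and to conclude $\alpha=0$ in $\mathcal{E}_0'$ one needs $\mathcal{D}_0$ to be dense in $\mathcal{E}_0$ for the $\Norm$ topology. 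This density is, in my view, the main technical obstacle: given $f\in\mathcal{E}_0$, one must build a sequence $f_n\in\mathcal{D}_0$ with $\Norm(f_n-f)\to 0$, and naive mollification does not converge in the $\sup_t\|\mathrm{D}^2\,\cdot\,\|_{L^\infty_x}$ component because $L^\infty_x$ is not separable. The construction I would attempt combines a smooth time-reparameterization $T^\epsilon:[0,1]\to[0,1]$ flattening to $0$ and $1$ near the endpoints (exploiting $f(0,\cdot)=f(1,\cdot)=0$), a spatial mollifier adapted to the embedding $W^{2,\infty}_x\subset C^{1,1}_x$ which lets one work with the continuous gradient before regularising, and a mean-subtraction to stay in $\mathcal{D}_0$, followed by a diagonal extraction balancing the space and time parameters.
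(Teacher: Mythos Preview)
The paper explicitly states this lemma without proof, so there is nothing to compare your argument against. Your construction of $F(\alpha)$ via the defining identity, the identification $F(\alpha)=\nabla_x q$ with $\langle q,\chi\rangle:=\langle\alpha,\chi-\bar\chi\rangle$, and the weak-$\ast$ continuity check are all correct and essentially forced.

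You are also right that injectivity is equivalent to the density of $\mathcal{D}_0$ in $(\mathcal{E}_0,\Norm)$, and right to be suspicious of it. In fact this density \emph{fails}. Take $f(t,x)=\eta(t)g(x)$ with $\eta\in\mathcal{D}((0,1))$ not identically zero and $g\in W^{2,\infty}(\T^d)$ of zero mean such that $\mathrm{D}^2g$ has a genuine jump discontinuity (in dimension one, let $g''$ be a zero-mean step function and integrate twice). Then $f\in\mathcal{E}_0$, but for any $f_n\in\mathcal{D}_0$ and any $t_0$ with $\eta(t_0)\neq0$ the function $\mathrm{D}^2f_n(t_0,\cdot)$ is continuous while $\mathrm{D}^2f(t_0,\cdot)=\eta(t_0)\mathrm{D}^2g$ is not, so $\|\mathrm{D}^2(f_n-f)(t_0,\cdot)\|_{L^\infty_x}$ is bounded below by a positive constant independent of $n$. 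Hence $\Norm(f_n-f)\not\to0$, and by Hahn--Banach there exists a nonzero $\alpha\in\mathcal{E}_0'$ vanishing on $\mathcal{D}_0$, i.e.\ $F(\alpha)=0$ with $\alpha\neq0$. Your sketched mollification cannot repair this: the embedding $W^{2,\infty}\subset C^{1,1}$ yields uniform convergence of \emph{first} derivatives under mollification, but second derivatives still only converge in $L^p$ for $p<\infty$.

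This gap does not damage the paper's main results. In Lemma~\ref{lem.charact_p} what is actually established is that $\nabla p=F(p)$ is uniquely determined as a distribution by formula~\eqref{eq.formule_p_MBro}; the uniqueness asserted in Theorem~\ref{thm.existence_pressure_MBro} is of $p\in\mathcal{D}'$ modulo time-only distributions, and that follows directly from~\eqref{eq.formule_p_MBro} regardless of whether the lift from $\nabla\mathcal{D}'$ back to $\mathcal{E}_0'$ is unique. A clean repair, if one insists on the lemma as stated, is to replace $\mathcal{E}_0$ throughout Section~\ref{sec.existence_MBro} by the $\Norm$-closure of $\mathcal{D}_0$: the boundedness argument of Lemma~\ref{lem.Hnu_bounded} goes through verbatim on this smaller space, and injectivity then becomes tautological.
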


Finally, we will need the following behavior of the relative entropy with respect to push-forwards. This is a simple consequence of the additivity property of the logarithm.
\begin{Lem}
	Let $\mathcal{X},\mathcal{Y}$ be two Polish spaces, $P,R \in \P(\mathcal{X})$ be two probability measure on $\mathcal{X}$ and $X: \mathcal{X} \to \mathcal{Y}$ be a Borel map. We have:
	\begin{equation}
	\label{eq.entropie_additive}
	H(P|R) = H\big(X \pf P \big| X \pf R\big) + \E_P\Big[H\Big(P^X \Big| R^X\Big)\Big],
	\end{equation}
	where $P^X$ and $R^X$ stand for the conditional probabilities $P(\bullet | X)$ and $R(\bullet | X)$ respectively. In particular,
	\begin{equation}
	\label{eq.entropie_diminue}
	H(X \pf P | X \pf R) \leq H(P|R) .
	\end{equation}
\end{Lem}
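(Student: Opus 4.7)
The plan is to prove the identity \eqref{eq.entropie_additive} by disintegrating both $P$ and $R$ along the fibers of $X$ and then factoring the Radon-Nikodym density $\D P / \D R$ accordingly. The bound \eqref{eq.entropie_diminue} then follows immediately from the nonnegativity of relative entropy.

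\textbf{Reduction to the absolutely continuous case.} First I would observe that if $P \not\ll R$, then because $\E_P[H(P^X|R^X)] \geq 0$ in all cases, it suffices to show that $X\pf P \not\ll X\pf R$ forces $H(P|R) = +\infty$ (trivial) and to check that the two infinite sides match. More delicately, one must handle the case where $X\pf P \ll X\pf R$ yet $P \not\ll R$: here one shows that on a set of positive $X\pf P$-measure the conditional $P^y$ is not absolutely continuous with respect to $R^y$, so that $\E_P[H(P^X|R^X)] = +\infty$. This follows from the disintegration theorem, which provides regular conditional probabilities $P^y$ and $R^y$ on $\mathcal{X}$ for $y \in \mathcal{Y}$, together with the standard fact that $P \ll R$ iff $X\pf P \ll X\pf R$ and $P^y \ll R^y$ for $(X\pf P)$-almost every $y$.

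\textbf{The factorization.} Assuming now $P \ll R$ with $f := \D P / \D R$, I would use the uniqueness of disintegration to verify that for $(X\pf P)$-almost every $y$, $P^y \ll R^y$ with
\begin{equation*}
\frac{\D P^y}{\D R^y}(x) = \frac{f(x)}{g(y)} \qquad \text{for } R^y\text{-a.e.\ } x,
\end{equation*}
where $g := \D (X\pf P)/\D (X\pf R)$. Indeed, writing $\D R = R^y(\D x)\,\D (X\pf R)(y)$ and using that $f(x)\,\D R = \D P = P^y(\D x)\, \D(X\pf P)(y) = g(y)\,P^y(\D x)\,\D(X\pf R)(y)$, the identification follows from the uniqueness of disintegration.

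\textbf{Integration.} Taking logarithms and integrating against $P$,
\begin{equation*}
H(P|R) = \E_P\bigl[\log f\bigr] = \E_P\bigl[\log g(X)\bigr] + \E_P\!\left[\log \frac{\D P^X}{\D R^X}\right].
\end{equation*}
The first term equals $\int \log g \, \D (X\pf P) = H(X\pf P | X\pf R)$. For the second, I disintegrate $P$ as above:
\begin{equation*}
\E_P\!\left[\log \frac{\D P^X}{\D R^X}\right] = \int \left( \int \log \frac{\D P^y}{\D R^y}(x) \, \D P^y(x) \right) \D (X\pf P)(y) = \E_P\bigl[H(P^X|R^X)\bigr],
\end{equation*}
which gives \eqref{eq.entropie_additive}. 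Finally, \eqref{eq.entropie_diminue} is immediate since $H(P^X|R^X) \geq 0$.

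The main obstacle is purely measure-theoretic bookkeeping: ensuring the existence and uniqueness of regular conditional probabilities on the Polish space $\mathcal{X}$ (which is standard), and carefully tracking the cases where one of the absolute continuities fails so that both sides of \eqref{eq.entropie_additive} are simultaneously $+\infty$. The algebraic factorization $f(x) = g(X(x)) \cdot (\D P^{X(x)}/\D R^{X(x)})(x)$ is then the heart of the proof, and reduces the whole statement to the additivity of the logarithm.
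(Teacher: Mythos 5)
Your proof is correct and follows exactly the route the paper has in mind: the paper states this lemma without proof, remarking only that it is ``a simple consequence of the additivity property of the logarithm,'' which is precisely your factorization $f(x) = g(X(x))\cdot \frac{\D P^{X(x)}}{\D R^{X(x)}}(x)$ followed by taking logarithms and integrating. The only point worth keeping in mind (standard bookkeeping, not a gap) is that splitting $\E_P[\log f]$ into the two expectations is legitimate because the negative parts of both relative-entropy integrands are always $P$-integrable, so no $\infty-\infty$ ambiguity arises even when one term is infinite.
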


\subsection{Girsanov theorem under finite entropy condition}
The link between $\Bro_\nu$ and $\MBro_\nu$ will be seen as a consequence of the following theorem of Girsanov type. This theorem states that if $P$ has finite entropy with respect to $R^\nu$, then $P$ is entirely characterized by its \emph{forward} and \emph{backward Nelson velocities} in the sense of \cite{nel67}. We refer to \cite{leo12} for the proof of this theorem.
\begin{Thm}
	\label{thm.girsanov}
	Take $\nu >0$ and $P \in \P(C^0([0,1]; \T^d))$ with finite entropy with respect to $R^{\nu}$. Then for almost all $t\in [0,1]$, the forward and backward drift:
	\begin{equation}
	\label{eq.b}
	\overrightarrow{b_t} := \lim_{h \to 0} \E_P\left[ \frac{X_{t+h} - X_t}{h} \Big| X_s, \, s \in [0,t] \right] \quad \mbox{and} \quad \overleftarrow{b_t} := \lim_{h \to 0} \E_P\left[ \frac{X_{t} - X_{t-h}}{h} \Big| X_s, \, s \in [t,1] \right]
	\end{equation}
	exist $P$-almost everywhere, have the following integrability:
	\begin{equation*}
	 \int_0^1  \E_P\left[\left| \overrightarrow{b_t} \right|^2 \right]\D t + \int_0^1  \E_P\left[\left| \overleftarrow{b_t} \right|^2 \right]\D t < + \infty ,
	\end{equation*}
	and satisfy in the sense of It\^o:
	\begin{equation}
	\label{eq.SDE_girsanov} \D X_t = \overrightarrow{b_t} \D t + \nu \D B_t \qquad \mbox{and} \qquad \D X_{1-t} = - \overleftarrow{b_t} \D t + \nu \D B_{1-t},
	\end{equation}
	where $(B_t)_{t \in [0,1]}$ is a standard Brownian motion under $P$.
	
	In addition, the entropy of $P$ with respect to $R^{\nu}$ can be expressed in terms of $(\overrightarrow{b_t})_{t \in [0,1]}$ and $(\overleftarrow{b_t})_{t \in [0,1]}$:
	\begin{equation}
	\label{eq.entropie_girsanov}
	\Hbar_\nu(P)  = \nu H(\rho_0 | \Leb) + \frac{1}{2}  \int_0^1  \E_P\left[\left| \overrightarrow{b_t} \right|^2 \right] \D t = \nu H(\rho_1 | \Leb) + \frac{1}{2} \int_0^1  \E_P\left[ \left| \overleftarrow{b_t} \right|^2 \right]\D t,
	\end{equation}
	where $\rho_0$ and $\rho_1$ stand for $X_0{}\pf P$ and $X_1{}\pf P$ respectively. 
\end{Thm}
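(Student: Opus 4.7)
}

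The plan is to reduce the statement to the classical Girsanov theorem applied in its entropic form (as developed by F\"ollmer), and then use the time-reversibility of $R^\nu$ to get the backward version. First, I would factor $P$ as $P = \int P^x\,\rho_0(\D x)$ where $\rho_0 = X_0\pf P$ and $P^x = P(\,\cdot\mid X_0 = x)$; by \eqref{eq.entropie_additive} applied to $X_0$, one has $H(P|R^\nu) = H(\rho_0|\Leb) + \E_{\rho_0}[H(P^x | R^{\nu,x})]$ where $R^{\nu,x}$ is the law of $x + \nu B$ on $[0,1]$. Since the left-hand side is finite, $\rho_0 \ll \Leb$ with finite entropy and $H(P^x|R^{\nu,x})<\infty$ for $\rho_0$-a.e.\ $x$.

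Next, for each such $x$, F\"ollmer's entropic version of Girsanov (this is exactly the content of \cite{leo12}) gives a progressively measurable drift $\overrightarrow{b}^x$ adapted to the forward filtration $\F_t = \sigma(X_s, s\leq t)$, such that under $P^x$ one has $\D X_t = \overrightarrow{b}^x_t\,\D t + \nu\,\D B_t$ with $B$ a $P^x$-Brownian motion, together with the identity $H(P^x|R^{\nu,x}) = \tfrac{1}{2\nu}\E_{P^x}\int_0^1 |\overrightarrow{b}^x_t|^2\,\D t$. Gluing these drifts into a single process $\overrightarrow{b}_t$ on the original probability space and integrating against $\rho_0$ recovers the first equality in \eqref{eq.entropie_girsanov} after multiplying by $\nu$ (this is where the prefactor $\nu$ in $\Hbar_\nu$ produces the $\frac12$ in front of $\int \E_P|\overrightarrow b|^2$). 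To identify $\overrightarrow{b}_t$ with the pointwise conditional expectation in \eqref{eq.b}, I would apply the SDE over $[t, t+h]$ and divide by $h$: the Brownian increment has vanishing conditional expectation, and the drift term is $\tfrac{1}{h}\int_t^{t+h} \E_P[\overrightarrow{b}_s|\F_t]\,\D s$, which converges to $\overrightarrow b_t$ in $L^2(\D t \otimes \D P)$ by Lebesgue differentiation in $L^2$.

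For the backward drift and the second equality in \eqref{eq.entropie_girsanov}, the key is the reversibility of $R^\nu$: the time-reversal map $\omega \mapsto \bar\omega := \omega(1-\cdot)$ pushes $R^\nu$ onto itself. Hence $\bar P := (\bar{\phantom{\omega}})\pf P$ has the same entropy with respect to $R^\nu$ as $P$ does, by \eqref{eq.entropie_diminue} applied in both directions. Applying the forward result above to $\bar P$ produces a forward drift for $\bar P$, which by time-reversal is exactly $\overleftarrow{b}$; and the entropy identity for $\bar P$ translates into the second equality of \eqref{eq.entropie_girsanov} after noting that $X_0\pf \bar P = X_1\pf P = \rho_1$. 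Finally, the $P$-a.s.\ validity of the backward SDE in \eqref{eq.SDE_girsanov} follows by transporting the forward SDE for $\bar P$ through the reversal.

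The main obstacle is the first step, F\"ollmer's identification of the drift of a finite-entropy measure together with the entropy formula: proving that finite relative entropy with respect to the reference diffusion is \emph{equivalent} to being the law of a semi-martingale with square-integrable drift requires a careful approximation argument (truncation of $\log \D P/\D R^\nu$, passing to the limit in a Girsanov density, and identifying the limit drift as the predictable projection of the forward difference quotient). Once this is granted, everything else is either conditioning via \eqref{eq.entropie_additive}, elementary Lebesgue differentiation for \eqref{eq.b}, or a clean use of the reversibility of $R^\nu$.
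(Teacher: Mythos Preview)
The paper does not prove Theorem~\ref{thm.girsanov}: it states the result and refers to \cite{leo12} for the proof. Your outline is a faithful sketch of the standard argument behind that reference (disintegrate on $X_0$ via \eqref{eq.entropie_additive}, apply F\"ollmer's entropic Girsanov fiberwise, and use the invariance of $R^\nu$ under time reversal for the backward statement), so there is nothing to compare against in the paper itself. Your identification of the main technical obstacle---proving that finite entropy forces a semimartingale decomposition with square-integrable drift and the entropy identity---is exactly the nontrivial content that the paper is outsourcing to \cite{leo12}.
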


It is often convenient to use a version of \eqref{eq.entropie_girsanov} where the arrow of time does not intervene. To do so, we do the half some of the two equalities in \eqref{eq.entropie_girsanov} and use the parallelogram identity, in order to get:
\begin{equation}
\label{eq.entropie_girsanov_symetrique}
\Hbar_\nu(P) = \nu \frac{H(\rho_0 | \Leb) + H( \rho_1 | \Leb)}{2} + \frac{1}{2}\int_0^1 \E_P\left[ \left| \frac{\overrightarrow{b_t} + \overleftarrow{b_t}}{2} \right|^2 + \left| \frac{\overrightarrow{b_t} - \overleftarrow{b_t}}{2} \right|^2 \right]\D t.
\end{equation}

Finally, we define the so-called \emph{current} and \emph{osmotic} velocities by the formulas:
\begin{equation}
\label{eq.def_courant_osmotique}
P \mbox{-p.s.} \qquad c(t,X_t) = \E_P\left[ \frac{\overrightarrow{b_t} + \overleftarrow{b_t}}{2} \bigg| X_t \right] \qquad \mbox{and} \qquad w(t,X_t) = \E_P\left[ \frac{\overrightarrow{b_t} - \overleftarrow{b_t}}{2} \bigg| X_t \right].
\end{equation} 
It is easily proved using \eqref{eq.SDE_girsanov} and the It\^o formula that $(\rho,c)$ is a solution of the continuity equation. Moreover, the following result due to F\"ollmer in \cite[Theorem 3.10]{follmer1986time} in the case of dimension 1 and with a straightforward generalization in higher dimension characterizes entirely $w$.

\begin{Thm}[F\"ollmer 1986]
	\label{thm.follmer}
	Under the assumption of Theorem \ref{thm.girsanov}, defining $\rho_t := X_t{}\pf P$ and identifying $\rho_t$ with its density with respect to $\Leb$, for all $t \in [0,1]$, we have:\footnote{Once again, $\nabla \log \rho_t$, is understood through the formula $\nabla \log \rho_t := 2 \nabla \sqrt{\rho_t} / \sqrt{\rho_t}$, using the fact that $\sqrt\rho \in L^2_t H^1_x$.}
	\begin{equation*}
	P \mbox{-p.s.} \qquad w(t,X_t) = \frac{\nu}{2} \nabla \log \rho(t,X_t).
	\end{equation*}
\end{Thm}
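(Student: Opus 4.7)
The plan is to derive a forward and a backward Fokker-Planck equation for $\rho$, subtract them, and close the identification via F\"ollmer's time-reversal formula for diffusions.

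First, starting from the forward It\^o decomposition in \eqref{eq.SDE_girsanov}, I would apply It\^o's formula to $\varphi(t, X_t)$ for an arbitrary test function $\varphi \in \mathcal{D}((0,1) \times \T^d)$ and take $P$-expectations. The stochastic integral vanishes, and since $\nabla \varphi(t, X_t)$ is $\sigma(X_t)$-measurable one may replace $\overrightarrow{b_t}$ by its conditional expectation $\overrightarrow b(t, X_t) := \E_P[\overrightarrow{b_t} \mid X_t]$. This would produce the forward Fokker-Planck equation
\[
\partial_t \rho + \Div(\rho\, \overrightarrow b) = \tfrac{\nu}{2}\, \Delta \rho
\]
in $\mathcal{D}'((0,1) \times \T^d)$. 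An entirely analogous argument applied to the pushforward $\hat P$ of $P$ under the time-reversal involution $\omega \mapsto \omega(1-\cdot)$, which has finite entropy with respect to $R^\nu$ by reversibility of the reference Brownian motion and whose forward drift coincides with $-\overleftarrow b(1-t,\cdot)$, would yield the backward Fokker-Planck equation
\[
\partial_t \rho + \Div(\rho\, \overleftarrow b) = -\tfrac{\nu}{2}\, \Delta \rho.
\]

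Subtracting the two would give $\Div\bigl(2 \rho w - \nu \nabla \rho\bigr) = 0$ in $\mathcal{D}'$, where $w = (\overrightarrow b - \overleftarrow b)/2$ is the osmotic velocity. This divergence-free identity alone does not force $2\rho w = \nu \nabla \rho$ pointwise, so to close the gap I would follow F\"ollmer and establish the time-reversal formula for drifts directly. Namely, I would introduce the candidate reversed diffusion $Y$ with drift $-\overrightarrow b(1-s,\cdot) + \nu \nabla \log \rho(1-s,\cdot)$ and the same diffusion coefficient as $X$, started from $Y_0 \sim \rho_1 \Leb$, and verify via a Girsanov change of measure that its path law equals $\hat P$. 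The crucial integrability input here is $\F(\rho) < +\infty$, a standard by-product of the finite entropy of $P$ with respect to $R^\nu$. Reading off the forward drift of $\hat P$ via Theorem~\ref{thm.girsanov} would then deliver the time-reversal identity
\[
\overleftarrow b(t,x) = \overrightarrow b(t,x) - \nu \nabla \log \rho(t,x),
\]
from which $w = \tfrac{\nu}{2} \nabla \log \rho$ is immediate.

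The main obstacle is the Girsanov identification: the drift correction $\nu \nabla \log \rho$ is only square-integrable against $\rho\, \D t \otimes \D x$, so Novikov's criterion does not apply and one must justify the change of measure by a truncation/approximation procedure at this low regularity level. This is precisely the content of F\"ollmer's 1986 paper, and the extension to higher spatial dimension is routine. A secondary technical point is to confirm that $\nabla \log \rho$ indeed makes sense as a measurable vector field on $\{\rho > 0\}$ with the correct integrability, which is guaranteed by the Fischer information bound inherited from the finite entropy of $P$ with respect to $R^\nu$.
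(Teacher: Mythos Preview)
The paper does not prove this theorem at all: it is simply quoted from F\"ollmer's 1986 paper \cite[Theorem 3.10]{follmer1986time}, with the remark that the extension to higher dimension is straightforward. So there is no ``paper's own proof'' to compare against, and your proposal is in fact a sketch of F\"ollmer's original argument rather than a reconstruction of anything in the present article.

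That said, your outline is essentially the right one and matches F\"ollmer's strategy: derive the forward and backward Fokker--Planck equations, observe that the divergence identity alone is insufficient, and then close the argument via the time-reversal formula for the drift obtained through a Girsanov-type identification under finite entropy. Your identification of the main technical difficulty---justifying the change of measure when the drift correction $\nu\nabla\log\rho$ is only in $L^2(\rho\,\D t\otimes\D x)$ and Novikov does not apply---is accurate, and this is precisely what F\"ollmer handles by approximation. One minor comment: the detour through the candidate reversed diffusion $Y$ and the subtraction of the two Fokker--Planck equations are somewhat redundant; once the time-reversal identity $\overleftarrow b = \overrightarrow b - \nu\nabla\log\rho$ is established, the formula for $w$ is immediate and the PDE subtraction plays no role.
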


As a consequence of formula \eqref{eq.entropie_girsanov_symetrique}\eqref{eq.def_courant_osmotique}, Theorem \ref{thm.follmer} and Jensen's inequality, we get the following inequality:
\begin{equation}
\label{eq.entropy_BB}
\nu \frac{H(\rho_0 | \Leb) + H( \rho_1 | \Leb)}{2} + \H_{\nu}(\rho, c) \leq \Hbar_{\nu}(P).
\end{equation}
This inequality turns out to be an equality if and only if for almost all $t$, $P$ almost everywhere:
\begin{equation}
\label{eq.drift_markov}
c(t,X_t) = \frac{\overrightarrow{b_t} + \overleftarrow{b_t}}{2}  \qquad \mbox{and} \qquad w(t,X_t) = \frac{\overrightarrow{b_t} - \overleftarrow{b_t}}{2}, 
\end{equation}
\emph{i.e.} if and only if $\overrightarrow{b_t}$ and $\overleftarrow{b_t}$ only depend on $X_t$, which is true if and only if $P$ is Markov.

\section{Existence of the pressure in the multiphasic model \sf{MBr\"o}}
\label{sec.existence_MBro}
The purpose of this section is to prove Theorem \ref{thm.existence_pressure_MBro}, so we work with the multiphasic problem $\MBro$, defined in Problem~\ref{pb.Mbro}. We fix $(\I, \m)$ a probability space of labels for the different phases and $\nu>0$ a level of noise. We also fix $\rhorho_0 = (\rho^i_0)_{i \in \I}$ and $\rhorho_1 = (\rho^i_1)_{i \in \I}$ two measurable families of probability measures on $\T^d$ satisfying the incompressibility condition~\eqref{eq.incompressibility_t01} and condition~\eqref{eq.CNS_existence_multiphase}, so that the problem $\MBro_\nu(\rhorho_0, \rhorho_1)$ admits a unique solution.

We first introduce a new optimization problem, relaxing the incompressibility constraint in $\MBro$.

\subsection{A modified optimization problem}
We define this problem as follows.
\begin{Pb}
	\label{pb.Mbro_density}
	Given $M = (M_t)_{t \in [0,1]} \in C^0_0([0,1]; \P(\T^d))$,
	find $(\rhorho, \cc)=(\rho^i, c^i)_{i\in \I}$ a measurable family of solutions of the continuity equation with finite Fischer information, well defined for $\m$-almost every $i$, such that:
	\begin{enumerate}[label=(\alph*)]
		\item \label{item.Mcompatibilitydensity} For $\m$-almost all $i \in \I$:
		\begin{equation*}
		\rho^i|_{t=0} = \rho^i_0 \qquad \mbox{and}\qquad \rho^i|_{t=1} = \rho^i_1.
		\end{equation*}
		\item \label{item.Mincompressibility_density} For all $t \in [0,1]$, we have:
		\begin{equation*}
		\int \rho^i(t,\bullet) \D \m(i) = M_t.
		\end{equation*}
		\item The functional $\HH_\nu(\rhorho, \cc)$ is finite and minimal among the measurable families of distributional solutions of the continuity equation satisfying \ref{item.Mcompatibilitydensity} and \ref{item.Mincompressibility_density}.
	\end{enumerate}
\end{Pb}

From now on, as $\rhorho_0$ and $\rhorho_1$ are supposed to be fixed, we will simply call this problem $\MBro_\nu(M)$, and $\HH_\nu^*(M)$ the corresponding optimal value of $\HH_\nu$. We fix by convention $\HH_\nu^*(M) = + \infty$, if $\MBro_\nu( M)$ has no solution. As $\rhorho_0$ and $\rhorho_1$ satisfy the condition of existence \eqref{eq.CNS_existence_multiphase} for the problem $\MBro_\nu(\rhorho_0, \rhorho_1)$, we know that\footnote{With an abuse of notation, we write $\Leb$ for the curve $t\in [0,1] \mapsto \Leb$.} $\HH_\nu^*(\Leb) < + \infty$.

The functional $\HH_\nu^*( M)$ is convex and lower semi-continuous, as stated in the following lemma.
\begin{Lem}
	\label{lem.Hnustar_convex}
	The functional
	\begin{equation*}
	M \in C^0_0([0,1]; \P(T^d)) \mapsto \HH_\nu^*( M)
	\end{equation*}
	is convex and lower semi-continuous for the topology of $C^0([0,1]; \P(T^d))$.
	
	In particular, it is also semi-continuous for any stronger topology, as the one of $\mathcal{E}$, so that:\footnote{We set $\HH_\nu^*( 1 + \varphi ) = + \infty$ in case $1 + \varphi$ is not everywhere nonnegative.}
	\begin{equation*}
	\varphi \in \mathcal{E}_0 \mapsto \HH_\nu^*( 1 + \varphi )
	\end{equation*}
	is also convex and lower semi-continuous.
\end{Lem}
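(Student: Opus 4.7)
For convexity, I would exploit the fact that $\H_\nu$ is jointly convex in the change of variable $(\rho, m) := (\rho, \rho c)$: the kinetic term $|m|^2/(2\rho)$ is the standard perspective function on $\R_+ \times \R^d$, and $\nu^2 \F(\rho) = \frac{\nu^2}{2}\int |\nabla \sqrt\rho|^2 = \frac{\nu^2}{8}\int |\nabla\rho|^2/\rho$ is equally a perspective in $(\rho, \nabla\rho)$, with $\nabla\rho$ linear in $\rho$. Given $M^0, M^1 \in C^0_0([0,1];\P(\T^d))$ with finite $\HH_\nu^*$-value, I would pick minimizers $(\rhorho^0, \cc^0)$ and $(\rhorho^1, \cc^1)$ of $\MBro_\nu(M^0)$ and $\MBro_\nu(M^1)$, and form, for $\lambda \in [0,1]$ and $\m$-a.e.\ $i$,
\[
\rho^{i,\lambda} := (1-\lambda)\rho^{i,0} + \lambda \rho^{i,1}, \qquad m^{i,\lambda} := (1-\lambda)\rho^{i,0} c^{i,0} + \lambda \rho^{i,1} c^{i,1},
\]
with $c^{i,\lambda} = m^{i,\lambda}/\rho^{i,\lambda}$. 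Linearity of the continuity equation and of the endpoints/incompressibility constraints makes $(\rhorho^\lambda, \cc^\lambda)$ an admissible competitor for $\MBro_\nu((1-\lambda)M^0 + \lambda M^1)$, and the pointwise convex inequality, once $\m$-integrated, yields the desired bound on $\HH_\nu^*$.

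For lower semi-continuity in the topology of $C^0([0,1];\P(\T^d))$, I would run the direct method. Take $M_n \to M$ with $L := \liminf \HH_\nu^*(M_n) < +\infty$, pass to a subsequence realizing $L$, and consider the (unique) minimizers $(\rhorho_n, \cc_n)$. I would package the data as $\Pi_n(\D i \D t \D x) := \m(\D i) \otimes \D t \otimes \rho^i_n(t, \D x) \in \P(\I \times [0,1] \times \T^d)$ together with the momentum measure $\mm_n := \cc_n \, \D\Pi_n$. Tightness of $\Pi_n$ is automatic (the $\I$-marginal is the fixed $\m$, the $t$-marginal is Lebesgue, and $\T^d$ is compact), and the kinetic bound $\int |\cc_n|^2 \D\Pi_n \leq 2L+2$ combined with Cauchy--Schwarz gives uniform total-variation bounds on $\mm_n$. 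After extraction, $\Pi_n \rightharpoonup \Pi$ and $\mm_n \rightharpoonup \mm$ in the weak-$*$ sense. The Fischer bound, together with weak-$*$ LSC of the Benamou--Brenier functional $(\Pi, \mm) \mapsto \int |\D\mm/\D\Pi|^2 \D\Pi$, ensures that $\mm \ll \Pi$, so I recover a measurable family $(\rho^i, c^i)_{i \in \I}$. Passing to the limit in the continuity equation (distributional, linear), in the endpoint conditions (time-marginals of $\Pi_n$ converge uniformly, and $(\rho^i_n|_{t=0,1})$ are fixed equal to $\rho^i_{0,1}$), and in the incompressibility constraint (using $M_n \to M$ uniformly), I obtain an admissible $(\rhorho, \cc)$ for $\MBro_\nu(M)$. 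A final application of convex weak-$*$ LSC of $\H_\nu$ in $(\rho^i, m^i)$ combined with Fatou over $(\I, \m)$ gives $\HH_\nu(\rhorho, \cc) \leq \liminf \HH_\nu(\rhorho_n, \cc_n) = L$, hence $\HH_\nu^*(M) \leq L$.

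The delicate step is the passage to the limit on the measurable family: the extracted subsequence must be $i$-uniform, which is why I work on the joint space $\I \times [0,1] \times \T^d$ rather than disintegrating first, and then use the disintegration of the weak limit $\Pi$ against $\m$ only at the end. For the concluding ``in particular'' statement, a sequence $\varphi_n \to \varphi$ in $\mathcal{E}_0$ converges (thanks to the $L^\infty_t L^\infty_x$ control coming from $\Norm(\cdot)$ and the Sobolev embedding $W^{2,\infty} \hookrightarrow C^0$) uniformly on $[0,1] \times \T^d$; consequently, the curves $(1+\varphi_n)\Leb$ converge to $(1+\varphi)\Leb$ narrowly, uniformly in $t$, that is, in $C^0_0([0,1];\P(\T^d))$. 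Convexity and LSC therefore transfer to $\mathcal{E}_0$ by composition with the affine continuous map $\varphi \mapsto 1 + \varphi$, with the usual convention $\HH_\nu^*(1+\varphi) = +\infty$ whenever $1+\varphi$ fails to be nonnegative (which preserves both convexity and LSC, the set where $1+\varphi \geq 0$ being closed and convex in $\mathcal{E}_0$).
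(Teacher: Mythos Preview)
Your proposal is correct and follows the same strategy as the paper: joint convexity of $\H_\nu$ in the variables $(\rho, \rho c)$ for the convexity claim, and the direct method (compactness of minimizers plus lower semi-continuity of $\HH_\nu$) for the lower semi-continuity. You spell out the compactness step in more detail than the paper, which simply refers to \cite{bre97,bre99}; note only that your packaging as measures on $\I \times [0,1] \times \T^d$ tacitly puts a topology on the abstract probability space $\I$, whereas the paper's formulation via convergence in law of the random variable $i \mapsto (\rho^{n,i}, m^{n,i})$ (with values in a Polish space of space--time measures) sidesteps this assumption.
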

\begin{proof}
	We start by proving the convexity. Let us take $M_1,M_2 \in C^0_0([0,1]; \P(T^d))$ and $\lambda \in [0,1]$. If $\HH_\nu^*(M_1)$ or $\HH_\nu^*(M_2)$ is infinite, there is nothing to prove. Else, let us consider $(\rhorho^1, \cc^1) = (\rho^{1,i}, c^{1,i})_{i \in \I}$ and $(\rhorho^2, \cc^2)=(\rho^{2,i}, c^{2,i})_{i \in \I}$ the solutions of $\MBro_\nu(M_1)$ and $\MBro_\nu(M_2)$ respectively. We define for $\m$-almost all $i \in \I$ and all $t \in [0,1]$:
	\begin{equation*}
	\widetilde{\rho}{}^{i}_t := (1-\lambda) \rho^{1,i}_t + \lambda \rho^{2,i}_t, \quad \widetilde{m}{}^i_t := (1-\lambda) \rho^{1,i}_t c^{1,i}_t + \lambda \rho^{2,i}_t c^{2,i}_t, \quad \widetilde{c}{}^i_t := \frac{\D \widetilde{m}{}^i_t}{\D \widetilde{\rho}{}^{i}_t},
	\end{equation*}
	and $(\widetilde{\rhorho},\widetilde{\cc}, \widetilde{\boldsymbol{m}}) := (\widetilde{\rho}{}^{i},\widetilde{m}{}^i,\widetilde{c}{}^i)_{i \in \I}$. It is well-known that $\AA$ (see \cite[Proposition 3.4]{bre97}) and $\Fbf$ (straightforward computation), and hence $\HH_\nu$ are convex, when considered as a function of the couple density/momentum, namely $(\widetilde{\rhorho}, \widetilde{\boldsymbol{m}})$ here.
	
	Consequently, as $(\widetilde{\rhorho}, \widetilde{\cc})$ is a competitor for $\MBro_\nu((1-\lambda) M_1 + \lambda M_2)$, we have:
	\begin{align*}
	\HH_\nu^*((1-\lambda) M_1 + \lambda M_2) &\leq \HH_\nu(\widetilde{\rhorho}, \widetilde{\cc})\\ &\leq (1-\lambda) \HH_\nu(\rhorho^1, \cc^1) + \lambda \HH_\nu(\rhorho^2, \cc^2)\\ &= (1-\lambda) \HH_\nu^*(M_1) + \lambda \HH_\nu^*(M_2).
	\end{align*}
	
	The semi-continuity works the same way. We first remark that $\AA$ (still thanks to \cite[Proposition 3.4]{bre97}), $\Fbf$ (by standard arguments), and hence $\HH_\nu$ are lower semi-continuous (when considered as a function of the couple density/momentum). Then, let us take $(M_n)_{n \in \N}$ a sequence of $C^0_0([0,1]; \P(T^d))$ converging to $M$, and $(\rhorho^n, \cc^n)$ the solution of $\MBro_\nu(M_n)$. If $\liminf_n \HH_\nu(\rhorho^n, \cc^n) = + \infty$, there is nothing to prove. Else, up to forgetting some labels, $(\HH_\nu(\rhorho^n, \cc^n))$ is bounded. But then, as $\HH_\nu \geq \AA$, the corresponding sequence $(\rhorho^n, \boldsymbol{m}^n) := (\rho^{n,i}, \rho^{n,i} c^{n,i})_{i \in \I, n \in \N}$ has its values in a compact\footnote{For the convergence in law and almost sure of the measurable map $i \mapsto  (\rho^{n,i}, \rho^{n,i} c^{n,i})_{i \in \I}$, with values in the set of space time measures endowed with the topology of narrow convergence. We are not more specific and refer to \cite{bre97,bre99} for more details.}. If $(\rhorho, \boldsymbol{m})$ is a limit point, and if $(\rhorho, \cc)$ is the corresponding couple of densities/velocities, then it is a competitor for $\MBro_\nu(M)$, and we have:
	\begin{equation*}
	\HH_\nu^*(M) \leq \HH_\nu(\rhorho, \cc) \leq \liminf_{n \to + \infty} \HH_\nu(\rhorho^n, \cc^n) = \liminf_{n \to + \infty} \HH_\nu^*(\rhorho^n, \cc^n). 
	\end{equation*}
	This concludes the proof.
\end{proof}
From now on, we decompose the proof of Theorem \ref{thm.existence_pressure_MBro} into two parts: in Lemma \ref{lem.Hnu_bounded}, we show that $\HH_\nu^*$ is bounded in an $\mathcal{E}_0$-neighborhood of $\Leb$, so that it admits a non-empty subdifferential at $\Leb$. In Lemma~\ref{lem.charact_p}, we show that this subdifferential is a singleton, and we derive formula~\eqref{eq.formule_p_MBro} for $p$, its only element. We conclude the proof of the theorem in Subsection \ref{subsec.conclusion_existence_pressure_MBro}.
\subsection{Boundedness of the optimal value}
Because of Lemma \ref{lem.Hnustar_convex}, and because $\HH_\nu^*(\Leb)< + \infty$, a sufficient condition for $\HH^*_\nu$ to admit a non-empty subdifferential $\partial \HH^*_\nu(\Leb) \subset \mathcal{E}_0'$ at $M = \Leb$ is to be bounded in a $\mathcal{E}_0$-neighbourhood of $\Leb$. This is the subject of the following lemma, which is the main part in the proof of Theorem~\ref{thm.existence_pressure_MBro}. We recall that the norm $\Norm$ is defined by formula \eqref{eq.def_N_Chap5}.

\begin{Lem}
	\label{lem.Hnu_bounded}
	There is $C>0$ only depending on the dimension, $\nu$, $\rhorho_0$ and $\rhorho_1$ such that for all $\varphi \in \mathcal{E}_0$ satisfying the estimate $\Norm(\varphi) \leq 1/2$, we have:
	\begin{equation*}
	\HH_\nu^*(1 + \varphi) \leq C.
	\end{equation*}
\end{Lem}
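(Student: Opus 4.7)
The strategy is to build an explicit competitor for $\MBro_\nu(1+\varphi)$ by distorting the (already fixed) solution of $\MBro_\nu(\Leb)$ with the family of diffeomorphisms supplied by Theorem~\ref{dacmos_chap5}. Let $(\rhorho, \cc)=(\rho^i, c^i)_{i \in \I}$ be the unique finite-energy solution of $\MBro_\nu(\rhorho_0, \rhorho_1)$, which exists by \eqref{eq.CNS_existence_multiphase}; note that $\AA(\rhorho, \cc)$ and $\Fbf(\rhorho)$ then depend only on $\rhorho_0, \rhorho_1, \nu$. Given $\varphi \in \mathcal{E}_0$ with $\Norm(\varphi)\le 1/2$, Theorem~\ref{dacmos_chap5} yields $\xi, \zeta \in \mathcal{E}_0$ with $\Norm(\xi)+\Norm(\zeta)\le C\Norm(\varphi)$, such that $\phi(t,\bullet)=\Id+\xi(t,\bullet)$ and $\psi(t,\bullet)=\Id+\zeta(t,\bullet)$ are mutually inverse diffeomorphisms of $\T^d$ with $\psi(t,\bullet)\pf\Leb = (1+\varphi(t,\bullet))\Leb$. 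I set, for $\m$-a.e. $i$,
\[
\tilde\rho^i_t := \psi(t,\bullet)\pf\rho^i_t, \qquad \tilde c^i(t,y) := \partial_t\psi(t,\phi(t,y)) + \Diff\psi(t,\phi(t,y))\,c^i(t,\phi(t,y)),
\]
so that pushing the continuity equation for $(\rho^i,c^i)$ through $\psi$ shows $(\tilde\rho^i,\tilde c^i)$ solves it too. Because $\zeta(0,\bullet)=\zeta(1,\bullet)=0$ (membership in $\mathcal{E}$), one has $\psi(0,\bullet)=\psi(1,\bullet)=\Id$, so $\tilde\rho^i_0=\rho^i_0$ and $\tilde\rho^i_1=\rho^i_1$ automatically, while linearity of the push-forward gives $\int \tilde\rho^i_t\,\D\m(i) = \psi(t,\bullet)\pf\Leb = (1+\varphi(t,\bullet))\Leb$.

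It remains to bound $\HH_\nu(\tilde\rhorho,\tilde\cc)$ uniformly in $\varphi$. First, since $\xi,\zeta$ have zero spatial mean, periodicity gives $\int \Diff\xi\,\D x=0$, so together with $\|\Diff^2\xi(t,\bullet)\|_{L^\infty}\le\Norm(\xi)$ a standard Poincaré-on-the-torus argument yields $\|\Diff\xi\|_{L^\infty_{t,x}}+\|\Diff\zeta\|_{L^\infty_{t,x}}\le C\Norm(\varphi)\le C/2$. Consequently $\|\Diff\phi\|_{L^\infty}$, $\|\Diff\psi\|_{L^\infty}$ and $|\det\Diff\phi|^{-1}$ are uniformly bounded. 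The action estimate then follows from the change of variable $y=\psi(t,x)$ and $|a+b|^2\le 2|a|^2+2|b|^2$:
\[
\AA(\tilde\rhorho,\tilde\cc) \le 2\int_0^1 \|\partial_t\psi(t,\bullet)\|_{L^\infty}^2\,\D t + 2\|\Diff\psi\|_{L^\infty}^2\,\AA(\rhorho,\cc) \le 2\Norm(\zeta)^2 + C\,\AA(\rhorho,\cc),
\]
which is bounded by a constant depending only on $d,\nu,\rhorho_0,\rhorho_1$.

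The Fischer estimate is the key technical step. From $\tilde\rho^i(t,y)=\rho^i(t,\phi(t,y))\,|\det\Diff\phi(t,y)|$ one computes
\[
\nabla_y\log\tilde\rho^i(t,y) = \Diff\phi(t,y)^{\top}\bigl(\nabla_x\log\rho^i\bigr)(t,\phi(t,y)) + \nabla_y\log|\det\Diff\phi(t,y)|,
\]
and pulling back via $y=\psi(t,x)$, with $\int\rho^i(t,x)\,\D x=1$, yields
\[
\Fbf(\tilde\rhorho) \le 2\|\Diff\phi\|_{L^\infty}^2\,\Fbf(\rhorho) + \tfrac14\bigl\|\nabla\log|\det\Diff\phi|\bigr\|_{L^\infty_{t,x}}^2.
\]
The first summand is bounded since $\Fbf(\rhorho)<\infty$ is fixed; the second is the delicate one, and this is where the strength of the norm $\Norm$ matters: $\nabla\log|\det\Diff\phi|$ is a rational expression in $\Diff\phi$ and $\Diff^2\phi=\Diff^2\xi$, with denominator bounded below, so it is controlled by $\|\Diff^2\xi\|_{L^\infty}\le\Norm(\xi)\le C$. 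Adding the two estimates, $\HH_\nu(\tilde\rhorho,\tilde\cc)=\AA(\tilde\rhorho,\tilde\cc)+\nu^2\Fbf(\tilde\rhorho)\le C$ for some $C=C(d,\nu,\rhorho_0,\rhorho_1)$, and since $(\tilde\rhorho,\tilde\cc)$ is admissible for $\MBro_\nu(1+\varphi)$ this gives the lemma. The main obstacle is precisely the need to control $\nabla\log\det\Diff\phi$, which is exactly why $\Norm$ contains the second spatial derivative of $f$ and not merely a first-order $L^\infty$ norm.
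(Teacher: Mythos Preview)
Your proof is correct and follows essentially the same route as the paper: you push the solution of $\MBro_\nu(\rhorho_0,\rhorho_1)$ forward through the Dacorogna--Moser diffeomorphism $\psi$ from Theorem~\ref{dacmos_chap5}, verify it is admissible for $\MBro_\nu(1+\varphi)$, and bound the action and Fischer parts separately via the same change-of-variables computations. Your explicit mean-value/Poincar\'e justification that $\|\Diff\xi\|_{L^\infty}\lesssim\Norm(\xi)$ is the one point the paper leaves implicit (it simply asserts that $\Norm(\zeta)$ controls $\sup_t\Lip(\zeta(t,\bullet))$).
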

\begin{proof}
	In the whole proof, the symbol $\lesssim$ means "lower than, up to a multiplicative dimensional constant".
	
	For a given $\varphi$ as in the statement of the lemma, we take $\xi$, $\zeta $ the two vector fields and $\phi$, $\psi$ the corresponding time-dependent diffeomorphisms given by Theorem \ref{dacmos_chap5}. We call $(\rhorho, \cc) = (\rho^i, c^i)_{i \in \I}$ the solution of $\MBro_\nu(\rhorho_0, \rhorho_1)$.
	
	We start by defining a competitor for the problem $\MBro_\nu(1 + \varphi)$.
	
	\paragraph{\underline{Step 1}: Definition of a competitor for $\boldsymbol{\MBro_\nu(1+\varphi)}$.}
	We define for $\m$-almost all $i \in \I$, all $t \in [0,1]$ and $x \in \T^d$:
	\begin{gather}
	\label{eq.def_rhophi} \rho^{\varphi,i}(t,x) := \rho^i(t, \phi(t,x)) \det \Diff \phi(t,x),\\
	\label{eq.def_cphi} c^{\varphi,i}(t,x) := \partial_t \psi(t, \phi(t,x)) + \Diff \psi(t, \phi(t,x)) \cdot c^i(t, \phi(t,x)),
	\end{gather}
	where $\Diff \phi$ and $\Diff \psi$ stand for the differentials with respect to $x$ of $\phi$ and $\psi$ respectively. Then, we call $(\rhorho^{\varphi}, \cc^{\varphi}) := (\rho^{\varphi,i}, c^{\varphi, i})_{i \in \I}$. 
	
	Let us prove that $(\rhorho^{\varphi}, \cc^{\varphi})$ is a competitor for the problem $\MBro_\nu(1 + \varphi)$.
	
	First of all, for $\m$-almost all $i$, $(\rho^{\varphi,i}, c^{\varphi, i})$ is a solution to the continuity equation. Indeed, by the change of variable formula, equation \eqref{eq.def_rhophi} exactly means that for all $t \in [0,1]$ and $\m$-almost all $i$:
	\begin{equation}
	\label{eq.rhophi_pf}
	\rho^{\varphi,i}(t,\bullet)  = \psi(t, \bullet) \pf \rho^i(t, \bullet).
	\end{equation}
	Hence, if $f$ is a test function, we have:
	\begin{align*}
	\frac{\D \phantom{t}}{\D t} \int f(x) \rho^{\varphi,i}(t,x) \D x&= \frac{\D \phantom{t}}{\D t}  \int f( \psi(t,x)) \rho^{i}(t,x)\D x&&\mbox{by \eqref{eq.rhophi_pf}},\\
	&= \int \left\{ \partial_t \Big( f( \psi(t,x)) \Big) +  \Big\cg c^i(t,x) , \nabla \Big( f( \psi(t,x)) \Big) \Big\cd \right\} \rho^i(t,x)\D x\\
	&= \int \Big\cg \nabla  f(\psi(t,x)), \partial_t \psi(t,x) +  \Diff \psi(t,x)\cdot c^i(t,x) \Big\cd  \rho^i(t,x)\D x \\
	&= \int \Big\cg \nabla  f(\psi(t,x)),  c^{\varphi,i}(t,\psi(t,x)) \Big\cd \rho^i(t,x) \D x &&\mbox{by \eqref{eq.def_cphi}}, \\
	&= \int \Big\cg \nabla  f(x),  c^{\varphi,i}(t,x)) \Big\cd  \rho^{\varphi,i}(t,x)\D x  &&\mbox{by \eqref{eq.rhophi_pf}},
	\end{align*}
	where in the second line, we used the fact that $(\rho^i, c^i)$ is a solution to the continuity equation. Hence, the claim.
	
	Moreover, formula \eqref{eq.pf_psi} implies that for all $t \in [0,1]$, the mean density of $\rhorho$ at time $t$ is $1 + \varphi(t, \bullet)$:
	\begin{equation*}
	\int \rho^{\varphi,i}(t,\bullet)  \D \m(i) = \psi(t, \bullet)\pf \int \rho^i(t, \bullet) \D \m(i) = \psi(t, \bullet)\pf \Leb = (1 + \varphi(t, \bullet))\Leb.
	\end{equation*}
	
	As a consequence, $(\rhorho^{\varphi}, \cc^{\varphi})$ is a competitor for $\MBro_\nu(1 + \varphi)$, and:
	\begin{equation}
	\label{eq.rhophi,cphi_competitor}
	\HH_\nu^*(1 + \varphi) \leq \HH_\nu(\rhorho^{\varphi}, \cc^{\varphi}) = \AA(\rhorho^{\varphi}, \cc^{\varphi}) + \nu^2\Fbf(\rhorho^{\varphi}).
	\end{equation}
	(We recall that $\AA$ and $\Fbf$ are defined in \eqref{eq.def_entropy_multiphase}.) To get the result, it remains to estimate $\HH_\nu(\rhorho^{\varphi}, \cc^{\varphi})$. Let us estimate first $\AA(\rhorho^{\varphi}, \cc^{\varphi})$, and then $\Fbf(\rhorho^{\varphi})$.
	\paragraph{\underline{Step 2}: Estimation of $\AA(\rhorho^{\varphi}, \cc^{\varphi})$.} For $i \in \I$, we have:
	\begin{align*}
	\frac{1}{2} \int_0^1 \hspace{-5pt} \int &|c^{\varphi,i}(t,x)|^2 \rho^{\varphi,i}(t,x)\D x \D t  \\
	&= \frac{1}{2} \int_0^1 \hspace{-5pt} \int |c^{\varphi,i}(t,\psi(t,x))|^2  \rho^i(t,x) \D x \D t &&\mbox{by \eqref{eq.rhophi_pf}}, \\
	&= \frac{1}{2} \int_0^1 \hspace{-5pt} \int |\partial_t \psi(t, x) + \Diff \psi(t, x) \cdot c^i(t, x)|^2 \rho^{i}(t,x) \D x \D t &&\mbox{by \eqref{eq.def_cphi}},  \\
	&= \frac{1}{2} \int_0^1 \hspace{-5pt} \int |c^i(t, x) + \partial_t \zeta(t, x)  + \Diff \zeta(t, x) \cdot c^i(t, x)|^2 \rho^i(t,x)\D x \D t &&\mbox{because }\psi = \Id + \zeta,\\
	&\lesssim \frac{1}{2}  \int_0^1 \hspace{-5pt} \int \Big\{ |c^i(t, x)|^2 + |\partial_t \zeta(t, x)|^2 + |\Diff \zeta(t, x) \cdot c^i(t, x)|^2  \Big\} \rho^i(t,x) \D x\D t \\
	&\lesssim \left(1 +  \frac{1}{2}\int_0^1 \hspace{-5pt} \int |c^i(t, x)|^2 \rho^i(t,x) \D x \D t \right) \left(1 + \Norm(\zeta)^2\right),
	\end{align*}
	where the last line is obtained thanks to the definition of $\Norm$ \eqref{eq.def_N_Chap5} and by observing that $\Norm(\zeta)$ controls $\sup_t \Lip(\zeta(t, \bullet))$. It remains to integrate this inequality with respect to $\m$ to obtain:
	\begin{equation}
	\label{eq.estim_AA}
	\AA(\rhorho^{\varphi}, \cc^{\varphi}) \lesssim (1 + \AA(\rhorho, \cc)) \left(1 + \Norm(\zeta)^2\right).
	\end{equation}
	\paragraph{\underline{Step 3}: Estimation of $\Fbf(\rhorho^{\varphi})$.} If $i \in \I$, using the definition \eqref{eq.def_rhophi} of $\rho^{\varphi,i}$, we can compute explicitly for $\m$-almost all $i$ and all $t$ and $x$:
	\begin{equation*}
	\nabla \log \rho^{\varphi,i}(t,x) = {}^t \Diff \phi(t,x) \cdot\nabla \log \rho^i(t, \phi(t,x)) + \nabla \log \det \Diff \phi(t,x),
	\end{equation*}
	where ${}^t \Diff \phi$ is the adjoint of $\Diff \phi$. As a consequence,
	\begin{align*}
	\frac{1}{2} \int_0^1 \hspace{-5pt} \int& \left| \frac{1}{2} \nabla \log \rho^{\varphi,i}(t,x)\right|^2 \rho^{\varphi, i}(t,x) \D x \D t \\
	&= \frac{1}{8} \int_0^1 \hspace{-5pt} \int |\nabla \log \rho^{\varphi,i}(t,\psi(t,x))|^2 \rho^{i}(t,x) \D x \D t &&\mbox{by \eqref{eq.rhophi_pf}},\\
	&= \frac{1}{8} \int_0^1 \hspace{-5pt} \int |{}^t \Diff \phi(t,\psi(t,x)) \cdot\nabla \log \rho^i(t,x) + \nabla \log \det \Diff \phi(t,\psi(t,x))|^2  \rho^{i}(t,x)\D x \D t \\
	&\lesssim \frac{1}{8} \int_0^1 \hspace{-5pt} \int \Big\{ |{}^t \Diff \phi(t,\psi(t,x)) \cdot\nabla \log \rho^i(t,x)|^2 + |\nabla \log \det \Diff \phi(t,\psi(t,x))|^2\Big\} \rho^{i}(t,x) \D x \D t.
	\end{align*}
	The first term can be estimated thanks to:
	\begin{align}
	\notag	\frac{1}{8} \int_0^1 \hspace{-5pt} \int & |{}^t \Diff \phi(t,\psi(t,x)) \cdot\nabla \log \rho^i(t,x)|^2  \rho^{i}(t,x) \D x \D t \\
	\notag	&= \frac{1}{8} \int_0^1 \hspace{-5pt} \int  |(\Id + {}^t \Diff \xi(t,\psi(t,x))) \cdot\nabla \log \rho^i(t,x)|^2  \rho^{i}(t,x) \D x \D t &&\mbox{because }\phi = \Id + \xi,\\
	\label{eq.estim_term1_fischer} &\lesssim \left(\frac{1}{8} \int_0^1 \hspace{-5pt} \int |\nabla \log \rho^{i}(t,x)|^2 \rho^{ i}(t,x) \D x \D t\right) \left(1 + \Norm(\xi)^2\right).
	\end{align}
	For the second term, quick computations show that for all $(t,x)$ where $\xi$ (and consequently $\phi$) is twice differentiable with respect to space:
	\begin{equation}
	\label{eq.nabla_log_det}
	\nabla \log \det \Diff \phi(t,\psi(t,x)) = (\Id + {}^t \Diff \zeta(t,x) )\cdot \nabla \Div \xi (t, \psi(t,x)) ,
	\end{equation} 
	so that:
	\begin{equation*}
	\Big\| 	\nabla \log \det \Diff \phi(t,\psi(t,x)) \Big\|_{\infty} \lesssim (1 + \Norm(\zeta))\Norm(\xi).
	\end{equation*}
	Consequently, we get:
	\begin{equation}
	\label{eq.estim_term2_fischer}
	\frac{1}{8} \int_0^1 \hspace{-5pt} \int |\nabla \log \det \Diff \phi(t,\psi(t,x))|^2  \rho^{i}(t,x) \D x \D t \lesssim  (1 + \Norm(\zeta))^2\Norm(\xi)^2.
	\end{equation}
	Gathering \eqref{eq.estim_term1_fischer} and \eqref{eq.estim_term2_fischer} and integrating with respect to $\m$, we end up with:
	\begin{equation}
	\label{eq.estim_fischer}\Fbf(\rhorho^{\varphi}) \lesssim (1 + \Fbf(\rhorho))\left( 1 + \Norm(\xi)^2 \right)\left( 1 + \Norm(\zeta)^2\right).
	\end{equation}
	\paragraph{\underline{Step 4}: Conclusion.} Gathering the two estimates \eqref{eq.estim_AA} and \eqref{eq.estim_fischer}, inequality \eqref{eq.rhophi,cphi_competitor}, the control \eqref{eq.estim_xi_zeta} on $\xi$ and $\zeta$ given by Theorem \ref{dacmos_chap5} and $\Norm(\varphi) \leq 1/2$, we get:
	\begin{equation*}
	\HH_\nu^*(1 + \varphi) \lesssim 1 + \nu^2 + \HH_\nu(\rhorho, \cc) \lesssim 1 + \nu^2 + \HH_\nu^*(\Leb).
	\end{equation*}	
	The result follows.
\end{proof}
\subsection{Characterization of the pressure as a distribution}
In the following lemma, we show that if $\HH_\nu^*$ admits a non-empty differential at $\Leb$, then its subdifferential is a singleton.
\begin{Lem}
	\label{lem.charact_p}
	Take $p \in \partial \HH_\nu^*(\Leb) \subset \mathcal{E}_0'$. Let $\nabla p$ be the distribution given by Lemma \ref{E'distribution}, and $(\rhorho, \cc) = (\rho^i, c^i)_{i \in \I}$ be the solution of $\MBro_\nu(\rhorho_0, \rhorho_1)$. Then in the sense of distributions:
	\begin{equation}
	\label{eq.pression_MBro}
	- \nabla p = \partial_t \left( \int \rho^i c^i \D \m(i)\right) + \bDiv \left( \int \Big\{ c^i \otimes c^i -  w^i\otimes w^i \Big\} \rho^i \D \m(i) \right), 
	\end{equation}
	where for $i \in \I$:
	\begin{equation*}
	w^i := \frac{\nu}{2} \nabla \log \rho^i.
	\end{equation*}
\end{Lem}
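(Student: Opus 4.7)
The plan is to identify $\cg p, \varphi \cd$ as the first variation of $\HH_\nu$ along a one-parameter family of competitors obtained by pushing the solution $(\rhorho, \cc)$ through a near-identity diffeomorphism, exactly as in Step 1 of Lemma \ref{lem.Hnu_bounded}, and then to read off $\nabla p$ through integration by parts. I restrict attention to test functions of the form $\varphi = -\Div v$ with $v \in \mathcal{D}((0,1)\times \T^d; \R^d)$: by Lemma \ref{E'distribution} this is enough to recover $\nabla p$ as a distribution, since $\cg \nabla p, v \cd_{\mathcal{D}',\mathcal{D}} = -\cg p, \Div v \cd_{\mathcal{E}_0',\mathcal{E}_0} = \cg p, \varphi \cd$.

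For $\eps$ small, Theorem \ref{dacmos_chap5} applied to $\eps\varphi$ (possibly composed with an $O(\eps^2)$ correction so that the first-order term is aligned with the chosen $v$ rather than only with its divergence) produces a time-dependent diffeomorphism $\psi_\eps$ of $\T^d$ with $\psi_\eps \pf \Leb = (1+\eps\varphi)\Leb$ and $\Norm(\psi_\eps - \Id - \eps v) = O(\eps^2)$. Defining $(\rhorho^\eps, \cc^\eps)$ through formulas \eqref{eq.def_rhophi}--\eqref{eq.def_cphi}, Step 1 of the previous lemma guarantees a competitor for $\MBro_\nu(1+\eps\varphi)$, so $\HH_\nu^*(1+\eps\varphi) \leq \HH_\nu(\rhorho^\eps, \cc^\eps)$. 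Combined with the subdifferential inequality at both $+\eps$ and $-\eps$, this forces
\begin{equation*}
\cg p, \varphi \cd = \mathcal{D}[\varphi] := \frac{\D}{\D \eps}\bigg|_{\eps=0}\HH_\nu(\rhorho^\eps, \cc^\eps).
\end{equation*}

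Expanding $\HH_\nu(\rhorho^\eps, \cc^\eps) = \AA(\rhorho^\eps, \cc^\eps) + \nu^2 \Fbf(\rhorho^\eps)$ to first order in $\eps$, as in Steps 2--3 of Lemma \ref{lem.Hnu_bounded} but retaining the linear term, gives
\begin{equation*}
\mathcal{D}[\varphi] = \int\!\!\int_0^1\!\!\int \Big\{\cg c^i, \partial_t v + \Diff v \cdot c^i \cd + \cg w^i, -{}^t\Diff v \cdot w^i + \tfrac{\nu}{2}\nabla \varphi \cd\Big\}\rho^i \,\D x\,\D t\,\D \m(i).
\end{equation*}
The decisive cancellation is that the $\nabla \varphi$ cross-term vanishes once integrated in $i$: integration by parts in $x$ rewrites it as $-\int\!\int_0^1\!\int \tfrac{\nu^2}{4}\varphi\, \Delta \rho^i\,\D x\,\D t\,\D\m(i)$, and by incompressibility $\int \Delta \rho^i \,\D \m(i) = \Delta\!\int\! \rho^i\,\D\m(i) = \Delta \Leb \equiv 0$. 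Integrating the surviving terms by parts in $t$ and $x$ (the compact support of $v$ in $(0,1)\times \T^d$ discarding all boundary contributions) collapses $\mathcal{D}[\varphi]$ into
\begin{equation*}
\mathcal{D}[\varphi] = -\Big\langle v,\,\partial_t\!\int\! \rho^i c^i \,\D\m(i) + \bDiv\!\int\!\big(c^i \otimes c^i - w^i \otimes w^i\big)\rho^i \,\D\m(i)\Big\rangle,
\end{equation*}
and equating with $\cg p, \varphi \cd = \cg \nabla p, v \cd$ yields precisely \eqref{eq.pression_MBro}.

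The main technical obstacle will be to rigorously control the $o(\eps)$ remainders in the expansion of $\HH_\nu(\rhorho^\eps, \cc^\eps)$. After the change of variables along $\psi_\eps$, these involve pointwise quantities of the type $|\Diff v \cdot c^i|^2 \rho^i$ and $|{}^t\Diff v \cdot w^i|^2 \rho^i$, which are integrable against $\m\otimes\D t\otimes\D x$ thanks to the smoothness of $v$ and the finite energy $\AA(\rhorho,\cc) + \nu^2\Fbf(\rhorho) = \HH_\nu^*(\Leb) < +\infty$; combined with the $\Norm$-bound on $\psi_\eps - \Id$ from \eqref{eq.estim_xi_zeta}, this enables a dominated convergence argument justifying the interchange of the derivative at $\eps=0$ with the integrals. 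A secondary subtlety is that the Dacorogna--Moser construction does not a priori fix the first-order term of $\psi_\eps$ to be our chosen $v$, but this is easily arranged by composing with a small correction and, in any case, since the final formula depends only on $\Div v = -\varphi$, the conclusion is insensitive to this choice.
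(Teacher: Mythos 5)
Your proposal is correct and follows essentially the same route as the paper: perturb the optimal $(\rhorho,\cc)$ by a near-identity time-dependent diffeomorphism as in Lemma \ref{lem.Hnu_bounded}, use the subdifferential inequality for both signs of $\eps$ to identify $\cg p,\varphi\cd$ with the first-order variation of $\AA+\nu^2\Fbf$, cancel the $\nabla\varphi$ (in the paper, $\nabla\Div\xi$) term by incompressibility, and integrate by parts to read off \eqref{eq.pression_MBro}. The only real difference is the parametrization: the paper fixes an arbitrary $\xi\in\mathcal{D}$, sets $\phi^\eps=\Id+\eps\xi$ and lets $\varphi^\eps=\det\Diff\phi^\eps-1$, which avoids Dacorogna--Moser and the alignment issue altogether; your primary fix (composing $\Id+\eps v$ with an $O(\eps^2)$ Dacorogna--Moser correction) is sound, but be aware that your fallback remark that the conclusion is ``insensitive to this choice'' because it only depends on $\Div v$ is not valid on its own --- pairing only against the unmodified Dacorogna--Moser fields would identify just the gradient part of the momentum equation's left-hand side, which is circular unless one already knows it is a gradient.
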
 
\begin{proof}
	Take $\xi \in \mathcal{D}((0,1) \times \T^d)$ be a smooth vector field, and define for all $\eps>0$, $t\in [0,1]$ and $x \in \T^d$:
	\begin{gather}
	\label{eq.def_phieps}\phi^\eps(t,x) := x + \eps \xi(t,x),\\
	\notag \varphi^\eps(t,x) := \det \Diff \phi^{\eps}(t,x) - 1.
	\end{gather}
	For all $\eps>0$, the function $\varphi^{\eps}$ belongs to $\mathcal{E}_0$, so that using $p \in \partial \HH_\nu^*(\Leb)$:
	\begin{equation}
	\label{eq.ineg_convexite_phieps}
	\HH_\nu^*(\Leb) + \big\cg p, \varphi^\eps \big\cd_{\mathcal{E}_0', \mathcal{E}_0} \leq \HH_\nu^*(1 + \varphi^\eps).
	\end{equation}
	
	First, we can check that for all $t$ and $x$:
	\begin{equation*}
	\varphi^{\eps}(t,x) = \eps \Div \xi(t,x) + \eps \delta^{\eps}(t,x), 
	\end{equation*}
	where $\delta^{\eps} = \delta^\eps(t,x) \in \R$ tends to zero in any reasonable space of functions. As a consequence, with the notations of Lemma \ref{E'distribution}, we can estimate $\cg p, \varphi^\eps \cd$ in formula \eqref{eq.ineg_convexite_phieps} by:
	\begin{equation}
	\label{eq.estim_bracket}
	\big\cg p, \varphi^\eps \big\cd_{\mathcal{E}_0', \mathcal{E}_0} = - \eps \big\cg \nabla p , \Div \xi \big\cd_{\mathcal{D}', \mathcal{D}} + \! \underset{\eps \to 0}{o}(\eps).
	\end{equation}
	
	It remains to give an estimate for $\HH_\nu^*(1 + \varphi^\eps)$. To do so, we build a competitor for $\MBro_\nu(1 + \varphi^\eps)$ as in the proof of Lemma \ref{lem.Hnu_bounded}, by defining:
	\begin{gather*}
	\rho^{\eps,i}(t,x) := \rho^i(t, \phi^\eps(t,x)) \det \Diff \phi^\eps(t,x),\\
	c^{\eps,i}(t,x) := \partial_t \psi^\eps(t, \phi^\eps(t,x)) + \Diff \psi^\eps(t, \phi^\eps(t,x)) \cdot c^i(t, \phi^\eps(t,x)),
	\end{gather*}
	where $\psi^\eps$ is the spatial inverse of $\phi^\eps$. It is well defined provided $\eps$ is sufficiently small, and it satisfies for all $t$ and $x$:
	\begin{equation}
	\label{eq.DL_psieps}
	\psi^{\eps} (t,x) = x - \eps \xi(t,x) + \eps r^{\eps}(t,x),
	\end{equation}
	where $r^{\eps} = r^\eps(t,x) \in \R^d$ tends to zero in any reasonable space of functions. For all $t$ and $x$, we also call:
	\begin{equation}
	\label{eq.def_zetaeps}
	\zeta^{\eps} (t,x) =\eps \xi(t,x) + \eps r^{\eps}(t,x),
	\end{equation}
	
	As in the proof of Lemma \ref{lem.Hnu_bounded}, $(\rhorho^\eps, \cc^\eps) := (\rho^{\eps,i}, c^{\eps,i})_{i \in \I}$ is a competitor for $\MBro_\nu(1 + \varphi^{\eps})$, so that:
	\begin{equation}
	\label{eq.Hnueps}
	\HH_\nu^*(1 + \varphi^{\eps}) \leq \HH_\nu(\rhorho^\eps, \cc^\eps) = \AA(\rhorho^\eps, \cc^\eps) + \nu^2 \Fbf(\rhorho^\eps).
	\end{equation}
	Once again, we will estimate $\AA(\rhorho^\eps, \cc^\eps)$ and $\Fbf(\rhorho^\eps)$ separately.
	
	\paragraph{Estimation of $\AA(\rhorho^\eps, \cc^\eps)$.} With the same computations as in Step 2 of the proof of Lemma \ref{lem.Hnu_bounded}, we get for $i \in \I$:
	\begin{align*}
	\frac{1}{2} \int_0^1 \hspace{-5pt} \int |c^{\eps,i}(t,x)|^2 \rho^{\eps,i}(t,x) \D x \D t &= \frac{1}{2} \int_0^1 \hspace{-5pt} \int |c^i(t, x) + \partial_t \zeta^\eps(t, x)  + \Diff \zeta^\eps(t, x) \cdot c^i(t, x)|^2 \rho^{i}(t,x) \D x \D t \\
	&= \frac{1}{2} \int_0^1 \hspace{-5pt} \int | c^i(t, x) -\eps \partial_t \xi(t,x) - \eps \Diff \xi(t, x) \cdot c^i(t, x)|^2 \rho^{i}(t,x) \D x \D t+ \!\underset{\eps \to 0}{o}(\eps),
	\end{align*}
	where the second line is obtained using \eqref{eq.def_zetaeps}. By expanding the square, we get:
	\begin{multline*}
	\frac{1}{2} \int_0^1 \hspace{-5pt} \int |c^{\eps,i}(t,x)|^2 \rho^{\eps,i}(t,x) \D x \D t \\ = \frac{1}{2} \int_0^1 \hspace{-5pt} \int |c^{i}(t,x)|^2 \rho^{i}(t,x) \D x \D t - \eps \int_0^1 \hspace{-5pt} \int\Big\cg c^{i}(t,x), \partial_t \xi(t,x) + \Diff \xi(t, x) \cdot c^i(t, x) \Big\cd \rho^{i}(t,x) \D x \D t + \!\underset{\eps \to 0}{o}(\eps).
	\end{multline*}
	Our first estimate is obtained by integrating this inequality with respect to $\m$, and by performing integrations by parts:
	\begin{align}
	\notag \AA(\rhorho^\eps, \cc^\eps) &= \AA(\rhorho, \cc) - \eps \int \hspace{-5pt}\int_0^1 \hspace{-5pt} \int\Big\cg c^{i}(t,x), \partial_t \xi(t,x) + \Diff \xi(t, x) \cdot c^i(t, x) \Big\cd \rho^{i}(t,x) \D x \D t \D \m(i)+ \!\underset{\eps \to 0}{o}(\eps) \\
	\label{eq.estim_Aeps} &=  \AA(\rhorho, \cc) + \eps \left\cg \partial_t \left( \int c^i \rho^i \D \m(i) \right) + \bDiv \left( \int c^i \otimes c^i \rho^i \D \m(i) \right), \xi \right\cd_{\mathcal{D}', \mathcal{D}} + \!\underset{\eps \to 0}{o}(\eps).
	\end{align}
	\paragraph{Estimation of $\Fbf(\rhorho^\eps)$.} Here, for $i \in \I$, the computations of Step 3 of the proof of Lemma \ref{lem.Hnu_bounded} give:
	\begin{multline*}
	\frac{1}{8} \int_0^1 \hspace{-5pt} \int |\nabla \log \rho^{\eps,i}(t,x)|^2 \rho^{\eps, i}(t,x) \D x \D t\\
	= \frac{1}{8} \int_0^1 \hspace{-5pt} \int |{}^t \Diff \phi^\eps(t,\psi^\eps(t,x)) \cdot\nabla \log \rho^i(t,x) + \nabla \log \det \Diff \phi^\eps(t,\psi^\eps(t,x))|^2 \rho^{i}(t,x) \D x \D t.
	\end{multline*}
	But because of \eqref{eq.def_phieps}, \eqref{eq.DL_psieps} and \eqref{eq.nabla_log_det}, we have for all $t$ and $x$:
	\begin{gather*}
	{}^t \Diff \phi^\eps(t,\psi^\eps(t,x)) = \Id + \eps \, {}^t \Diff \xi(t,x) + \!\underset{\eps \to 0}{o}(\eps),\\
	\nabla \log \det \Diff \phi^\eps(t,\psi^\eps(t,x)) = \eps \nabla \Div \xi(t,x) + \!\underset{\eps \to 0}{o}(\eps), 
	\end{gather*}
	where the $o(\eps)$ is uniform in $t$ and $x$. Plugging these equalities in the previous formula leads to:
	\begin{align*}
	\frac{1}{8} \int_0^1 \hspace{-5pt} \int& |\nabla \log \rho^{\eps,i}(t,x)|^2 \rho^{\eps, i}(t,x) \D x \D t\\
	&= \frac{1}{8} \int_0^1 \hspace{-5pt} \int |( \Id + \eps \, {}^t \Diff \xi(t,x)) \cdot\nabla \log \rho^i(t,x) +  \eps \nabla \Div \xi(t,x) |^2 \rho^{i}(t,x) \D x \D t + \!\underset{\eps \to 0}{o}(\eps) \\
	&= 	\frac{1}{8} \int_0^1 \hspace{-5pt} \int |\nabla \log \rho^{i}(t,x)|^2 \rho^{i}(t,x) \D x \D t \\
	&\qquad + \frac{\eps}{4} \int_0^1 \hspace{-5pt} \int \Big\cg \nabla \log \rho^{i}(t,x), \Diff \xi(t,x) \cdot \nabla \log \rho^{i}(t,x) + \nabla \Div \xi(t,x) \Big\cd \rho^{i}(t,x) \D x \D t + \!\underset{\eps \to 0}{o}(\eps).
	\end{align*}
	Integrating with respect to $\m$, multiplying by $\nu^2$, calling $w^i :=\nu \nabla \log \rho^i/2$ and performing integrations by parts, we get:
	\begin{align}
	\notag &\qquad\nu^2 \Fbf(\rhorho^\eps) \\
	\notag &= \nu^2 \Fbf(\rhorho) + \frac{\eps}{4} \nu^2 \int \hspace{-5pt}\int_0^1 \hspace{-5pt} \int \Big\cg \nabla \log \rho^{i}(t,x) , \Diff \xi(t,x) \cdot \nabla \log \rho^{i}(t,x) + \nabla \Div \xi(t,x) \Big\cd \rho^{i}(t,x) \D x \D t \D \m(i) + \!\underset{\eps \to 0}{o}(\eps)\\
	\notag &=\nu^2 \Fbf(\rhorho) - \eps  \left\cg \bDiv \left( \int w^i \otimes w^i \rho^i \D \m(i) \right) - \nabla \Div\left(\frac{\nu}{2} \int w^i \rho^i \D \m(i)\right), \xi \right\cd_{\mathcal{D}', \mathcal{D}} + \!\underset{\eps \to 0}{o}(\eps)\\
	\label{eq.estim_Feps}& = \nu^2 \Fbf(\rhorho) - \eps  \left\cg \bDiv \left( \int w^i \otimes w^i \rho^i \D \m(i) \right), \xi \right\cd_{\mathcal{D}', \mathcal{D}} + \!\underset{\eps \to 0}{o}(\eps),
	\end{align}
	where the last line is obtained using:
	\begin{equation*}
	\int w^i \rho^i \D \m(i) = \frac{1}{2} \int \nabla \rho^i \D \m(i) = \frac{1}{2}\nabla \int \rho^i \D \m(i) = \frac{1}{2} \nabla \Leb = 0. 
	\end{equation*}
	
	\paragraph{Conclusion.} Hence, gathering the convex inequality \eqref{eq.ineg_convexite_phieps}, the expansion of the bracket \eqref{eq.estim_bracket}, inequality \eqref{eq.Hnueps} and the two estimates \eqref{eq.estim_Aeps} and \eqref{eq.estim_Feps}, we get:
	\begin{multline*}
	\HH_\nu^*(\rhorho, \cc) - \eps \big\cg \nabla p , \Div \xi \big\cd_{\mathcal{D}', \mathcal{D}} \\
	\leq \HH_\nu^*(\rhorho, \cc) + \eps  \left\cg \partial_t \left( \int c^i \rho^i \D \m(i) \right) + \bDiv \left( \int \Big\{ c^i \otimes c^i - w^i \otimes w^i \Big\} \rho^i \D \m(i) \right), \xi \right\cd_{\mathcal{D}', \mathcal{D}} + \!\underset{\eps \to 0}{o}(\eps).
	\end{multline*}
	Letting $\eps$ go to zero, this formula implies that for all $\xi \in \mathcal{D}((0,1) \times \T^d)$,
	\begin{equation*}
	\left\cg \partial_t \left( \int c^i \rho^i \D \m(i) \right) + \bDiv \left( \int \Big\{ c^i \otimes c^i - w^i \otimes w^i \Big\} \rho^i \D \m(i) \right) + \nabla p, \xi \right\cd_{\mathcal{D}', \mathcal{D}} \geq 0.
	\end{equation*}
	But replacing $\xi$ by $- \xi$, this inequality is in fact an equality, and it exactly means that \eqref{eq.pression_MBro} holds in a distributional sense.
\end{proof}
\subsection{Conclusion of the proof of Theorem \ref{thm.existence_pressure_MBro}}
\label{subsec.conclusion_existence_pressure_MBro}

Theorem \ref{thm.existence_pressure_MBro} follows easily from Lemma \ref{lem.Hnustar_convex}, Lemma \ref{lem.Hnu_bounded} and Lemma \ref{lem.charact_p}. Because of Lemma~\ref{lem.Hnustar_convex}, $\HH_\nu^*$ is convex and lower semi-continuous, and thanks to Lemma \ref{lem.charact_p}, we can find $p \in \partial \HH_\nu^* \subset \mathcal{E}_0'$ \emph{i.e.} such that for all $\varphi \in \mathcal{E}_0$,
\begin{equation*}
\HH_\nu^*(1 + \varphi) \geq \HH_\nu^*(\Leb) + \cg p , \varphi \cd.
\end{equation*}
But in that case, if $(\widetilde\rhorho, \widetilde\cc)$ is as in the statement of Theorem \ref{thm.existence_pressure_MBro}, and if $(\rhorho, \cc)$ is the solution of $\MBro_\nu(\rhorho_0, \rhorho_1)$, then
\begin{equation*}
\HH_\nu(\widetilde\rhorho, \widetilde\cc) \geq \HH_\nu^*(1 + \varphi) \geq \HH_\nu^*(\Leb) + \cg p , \varphi \cd = \HH_\nu(\rhorho, \cc) + \cg p , \varphi \cd .
\end{equation*}
Uniqueness and formula \eqref{eq.formule_p_MBro} are directly given by Lemma \ref{lem.charact_p}. \qed

\section{Link between \sf{MBr\"o} and \sf{Br\"o}}
\label{sec.lien_Bro_MBro}
\subsection{Statement of the result}
We take $\gamma$ a bistochastic measure satisfying the condition \eqref{eq.condition_existence_process} of existence of a solution for the problem $\Bro_\nu(\gamma)$, defined in Problem~\ref{pb.Bro}. We call as in the introduction $R^\nu \in \mathcal{P}(C^0([0,1]; \R^d))$ the law of the Brownian motion starting from $\Leb$. Let $P$ be a solution of $\Bro_\nu(\gamma)$. We define for $\gamma$-almost all $(x,y) \in \T^d \times \T^d$:
\begin{equation}
\label{eq.def_Pxy}
P^{x,y} := P(\bullet | X_0 =x \mbox{ and } X_1 = y).
\end{equation}
(As usual, if $t \in [0,1]$, $X_t$ is the evaluation map at time $t$.) Also call $R^{\nu,x,y}$ the Brownian bridge:
\begin{equation}
\label{eq.def_Rnuxy}
R^{\nu, x,y} := R^\nu(\bullet | X_0 =x \mbox{ and } X_1 = y).
\end{equation}

For $a<b \in [0,1]$, we call $X_{[a,b]}$ the restriction operator:
\begin{equation*}
X_{[a,b]}: \omega \in C^0([0,1]; \T^d) \mapsto \omega|_{[a, b]} \in  C^0([a,b]; \T^d).
\end{equation*}
Then, we define:
\begin{equation}
\label{eq.def_Pxyeps}
P^{x,y}_\eps := X_{[\eps,1-\eps]} {}\pf P^{x,y}, \quad R^\nu_\eps :=  X_{[\eps,1-\eps]} {}\pf R^\nu, \quad  R^{\nu,x,y}_\eps :=  X_{[\eps,1-\eps]} {}\pf R^{\nu,x,y}.
\end{equation}
We will prove that the family of couples of density and current velocity of $P^{x,y}$ (in the sense of formula \eqref{eq.def_courant_osmotique} and Theorem \ref{thm.girsanov} of the introduction), with $(x,y) \in \T^d \times \T^d$ is a solution of the problem $\MBro_\nu$ with respect to its own endpoints, with $\I = \T^d \times \T^d$ and $\m = \gamma$.

\begin{Thm}
	\label{thm.lien_Bro_MBro}
	Take $\gamma$ a bistochastic measure satisfying the condition \eqref{eq.condition_existence_process} of existence for $\Bro_\nu(\gamma)$, $P$ the solution of $\Bro_\nu(\gamma)$, $\eps\in (0,1/2)$ and for $\gamma$-almost all $(x,y) \in \T^d \times \T^d$, consider $P^{x,y}_\eps$ as defined by formula~\eqref{eq.def_Pxyeps}.
	
	For $\gamma$-almost all $(x,y)$, we have:
	\begin{equation}
	\label{eq.finite_entropy}
	H(P^{x,y}_\eps | R^\nu_\eps) < + \infty.
	\end{equation}
	For all $t \in [0, 1]$, call $\rho^{x,y}_{t} := X_t {}\pf P^{x,y}$ and take $c^{x,y}:[\eps, 1-\eps] \times \T^d \to \R^d$ the current velocity of $P^{x,y}_\eps$ as given by formula \eqref{eq.def_courant_osmotique} and Theorem \ref{thm.girsanov}.\footnote{\label{footnote.restriction_time} \emph{A priori}, $c^{x,y}$ depends on $\eps$. In fact, we can show with \eqref{eq.b}\eqref{eq.def_courant_osmotique} that if $\eps_1 < \eps_2$, then $c_{\eps_2}^{x,y}$ is the restriction of $c^{x,y}_{\eps_1}$ to the set of times $[\eps_2, 1-\eps_2]$. For this reason, and to lighten the notations, we will omit dependence of $c^{x,y}$ in $\eps$.}
	
	Then $(\rhorho, \cc) := (\rho^{x,y}, c^{x,y})_{(x,y) \in \T^d \times \T^d}$ is the solution of $\MBro_\nu(\rhorho_{\eps}, \rhorho_{1-\eps})$ between the times $t=\eps$ and $t=1 - \eps$, with $\I = \T^d \times \T^d$ and $\m = \gamma$.
\end{Thm}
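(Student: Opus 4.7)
My plan is to split the proof into three steps: finite entropy, the multiphasic competitor property, and optimality.

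\textbf{Step 1: Finite entropy and competitor property.} To prove \eqref{eq.finite_entropy}, I first apply the entropy additivity formula \eqref{eq.entropie_additive} to $P$ and $R^\nu$ disintegrated by the endpoint map $(X_0,X_1)$: since $\Hbar_\nu(P)<+\infty$ and $H(\gamma|\Leb\otimes\Leb)<+\infty$ by \eqref{eq.condition_existence_process}, this yields $H(P^{x,y}|R^{\nu,x,y})<+\infty$ for $\gamma$-a.e.\ $(x,y)$. The data-processing inequality \eqref{eq.entropie_diminue} then propagates the bound to $H(P^{x,y}_\eps|R^{\nu,x,y}_\eps)$, and switching to $R^\nu_\eps$ as reference only adds an integral of $\log(\D R^{\nu,x,y}_\eps/\D R^\nu_\eps)$; by Markovity of $R^\nu$ this density equals $p^\nu_\eps(x,X_\eps)\,p^\nu_\eps(X_{1-\eps},y)/p^\nu_1(x,y)$ with $p^\nu_t$ the heat kernel, hence is bounded above and bounded away from zero at $t=\eps\in(0,1/2)$ on the compact torus. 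This gives \eqref{eq.finite_entropy}. Theorem~\ref{thm.girsanov} and Theorem~\ref{thm.follmer} applied to $P^{x,y}_\eps$ then identify the current velocity $c^{x,y}$ and the osmotic velocity $\tfrac{\nu}{2}\nabla\log\rho^{x,y}$, establish the continuity equation, and ensure finite Fisher information. Incompressibility $X_t\pf P=\Leb$ yields $\int\rho^{x,y}_t\D\gamma(x,y)=\Leb$ for every $t\in[\eps,1-\eps]$, and inequality \eqref{eq.entropy_BB} shows that $\H_\nu(\rho^{x,y},c^{x,y})$ is finite $\gamma$-a.e., so $(\rhorho,\cc)$ is a legitimate competitor for $\MBro_\nu(\rhorho_\eps,\rhorho_{1-\eps})$.

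\textbf{Step 2: A concatenated Br\"odinger competitor.} For the optimality, given any competitor $(\tilde\rhorho,\tilde\cc)$ of $\MBro_\nu(\rhorho_\eps,\rhorho_{1-\eps})$, I aim to build a competitor $\tilde P$ of $\Bro_\nu(\gamma)$ whose entropy gap with $P$ is bounded by the multiphasic cost gap. For $\gamma$-a.e.\ $(x,y)$, let $\tilde Q^{x,y}\in\P(C^0([\eps,1-\eps];\T^d))$ be the unique Markov lift of $(\tilde\rho^{x,y},\tilde c^{x,y})$, namely the law of the It\^o SDE with forward drift $\tilde c^{x,y}+\tfrac{\nu}{2}\nabla\log\tilde\rho^{x,y}$ started from $\tilde\rho^{x,y}_\eps$. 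Glue the three intervals as
\[
\tilde P^{x,y}(\D\omega):=P^{x,y}\bigl(\D\omega|_{[0,\eps]}\bigm|X_\eps=\omega_\eps\bigr)\,\tilde Q^{x,y}(\D\omega|_{[\eps,1-\eps]})\,P^{x,y}\bigl(\D\omega|_{[1-\eps,1]}\bigm|X_{1-\eps}=\omega_{1-\eps}\bigr),
\]
and $\tilde P:=\int\tilde P^{x,y}\D\gamma(x,y)$. This is well defined since $\tilde Q^{x,y}$ and $P^{x,y}$ share their marginals at $t=\eps$ (both equal $\rho^{x,y}_\eps$) and at $t=1-\eps$. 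Verifying that $(X_0,X_1)\pf\tilde P=\gamma$ and $X_t\pf\tilde P=\Leb$ at every $t$ is routine: at interior times this is the incompressibility of $(\tilde\rhorho,\tilde\cc)$, while at outer times marginalizing the conditioning restores the original $P^{x,y}$-marginal.

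\textbf{Step 3: Entropy comparison and conclusion.} The main step is to control $\Hbar_\nu(\tilde P)-\Hbar_\nu(P)$. I disintegrate each $\tilde P^{x,y}$ and $P^{x,y}$ with respect to $R^{\nu,x,y}$ via the map $(X_\eps,X_{1-\eps})$; by Markovity, the conditional reference factorizes as a product of three independent Brownian bridges on $[0,\eps]$, $[\eps,1-\eps]$, $[1-\eps,1]$. For $\tilde P^{x,y}$ the conditional is itself a product by construction, hence entropy is strictly additive; for $P^{x,y}$, subadditivity of relative entropy under a product reference, combined with Jensen's inequality (convexity of $H(\cdot|\,\text{fixed})$), yields a matching lower bound. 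Crucially, the two ``side'' contributions---expectations of $H(P^{x,y}(\cdot|_{[0,\eps]}|X_\eps=a)|\mathrm{BB}^\nu_{x\to a})$ against $\rho^{x,y}_\eps$ and the analogue at $1-\eps$---are the same on both sides, thanks to the endpoint matching of marginals. Changing the reference $R^{\nu,x,y}_\eps\rightsquigarrow R^\nu_\eps$ contributes only a function of $(X_\eps,X_{1-\eps})$ whose expectations under $\tilde Q^{x,y}$ and under $P^{x,y}_\eps$ coincide (same marginal laws at $\eps$ and $1-\eps$), so it also cancels. Applying \eqref{eq.entropy_BB} as an equality for the Markov $\tilde Q^{x,y}$ and as an inequality for $P^{x,y}_\eps$, then integrating against $\gamma$, yields
\[
\Hbar_\nu(\tilde P)-\Hbar_\nu(P)\leq\HH_\nu(\tilde\rhorho,\tilde\cc)-\HH_\nu(\rhorho,\cc).
\]
The Br\"odinger minimality of $P$ forces the left-hand side to be nonnegative, proving optimality of $(\rhorho,\cc)$. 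The delicate point is this entropy bookkeeping: securing the cancellation of the side terms despite $\tilde P^{x,y}$ and $P^{x,y}$ having potentially different joint $(X_\eps,X_{1-\eps})$-laws is what makes the whole comparison work.
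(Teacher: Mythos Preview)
Your proof is correct and follows essentially the same route as the paper. The overall architecture---finite entropy via disintegration and the explicit heat-kernel density of $R^{\nu,x,y}_\eps$ against $R^\nu_\eps$, construction of a Br\"odinger competitor by concatenation, and the entropy bookkeeping showing $\Hbar_\nu(\tilde P)-\Hbar_\nu(P)\le \HH_\nu(\tilde\rhorho,\tilde\cc)-\HH_\nu(\rhorho,\cc)$---matches the paper exactly; you disintegrate by $(X_\eps,X_{1-\eps})$ where the paper disintegrates by the full middle path $X_{[\eps,1-\eps]}$, but the two computations are equivalent once one further conditions on the middle segment, and your use of subadditivity with respect to a product reference together with Jensen is precisely the content of the paper's inequality for the side terms.

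One technical point worth flagging: your ``unique Markov lift'' of $(\tilde\rho^{x,y},\tilde c^{x,y})$ via the It\^o SDE with drift $\tilde c^{x,y}+\tfrac{\nu}{2}\nabla\log\tilde\rho^{x,y}$ is not \emph{a priori} well posed, since the drift has no regularity beyond the finite-energy bound. The paper isolates this step as a separate lemma (its Lemma~\ref{lem.noisy_superposition_principle}) and proves it by mollifying $(\tilde\rho,\tilde c)$, solving the smooth SDE, and passing to a limit using the compactness of entropy sublevels; the conclusion is only the inequality \eqref{eq.entropy_converse_BB}, which is all you need. Your invocation of \eqref{eq.entropy_BB} ``as an equality for the Markov $\tilde Q^{x,y}$'' is morally right but should be replaced by this construction to be fully rigorous.
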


To prove this theorem, we will need two lemmas. The first one will be useful to show that for $\gamma$-almost all $(x,y)$, $P^{x,y}_\eps$ has a finite entropy with respect to $R^\nu_\eps$. It writes as follows.
\begin{Lem}
	\label{lem.time_restriction}
	Take $\eps \in (0,1/2)$ and $x,y \in \T^d$. We have $R_\eps^{\nu,x,y} \ll R^\nu_\eps$, and there exist positive smooth functions $f^{\nu,x,y}_\eps$ and $g^{\nu,x,y}_\eps$ on $\T^d$ such that the Radon-Nikodym derivative of $R^{\nu,x,y}_\eps$ with respect to $R^\nu_\eps$ is:
	\begin{equation}
	\label{eq.bridge_sol_schro}
	\frac{\D R^{\nu,x,y}_\eps}{\D R^\nu_\eps\phantom{{}^{,x,y}}} = f^{\nu,x,y}_\eps(X_\eps) g^{\nu,x,y}_\eps(X_{1- \eps}) 
	\end{equation}
	
	As a consequence, for all $Q \in \P(C^0([0,1]; \T^d))$, we have:
	\begin{equation}
	\label{eq.entropy_bridge}
	H(Q_\eps | R^{\nu}_\eps) = H(Q_\eps | R^{\nu, x, y}_\eps) + \int \log f^{\nu,x,y}(x) \D \rho^Q_\eps(x) + \int \log g_\eps^{\nu,x,y}(x) \D \rho^Q_{1-\eps}(x),
	\end{equation}
	where $Q_\eps := X_{[\eps,1-\eps]}{}\pf Q$, $\rho^Q_\eps := X_\eps {}\pf Q$ and $\rho^Q_{1-\eps} := X_{1-\eps} {}\pf Q$.
\end{Lem}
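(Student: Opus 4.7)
The plan is to compute the Radon--Nikodym derivative $\D R^{\nu,x,y}_\eps / \D R^\nu_\eps$ explicitly, combining the Markov property of Brownian motion with the smoothness and positivity of the heat kernel on $\T^d$. Let $r^\nu_t(a,b)$ denote the transition density of $R^\nu$ on $\T^d$ at time $t$; it is classical that $r^\nu_t \in C^\infty(\T^d\times\T^d)$ and $r^\nu_t > 0$ for every $t>0$, by hypoellipticity of the heat operator together with the compactness of $\T^d$.

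First I would exploit the stationarity $X_0\pf R^\nu = \Leb$ and the Markov property to write the joint law of $(X_0, X_\eps, X_{1-\eps}, X_1)$ under $R^\nu$ as having density
\begin{equation*}
(x,a,b,y) \longmapsto r^\nu_\eps(x,a)\, r^\nu_{1-2\eps}(a,b)\, r^\nu_\eps(b,y)
\end{equation*}
with respect to $\Leb^{\otimes 4}$. By definition of $R^{\nu,x,y}$, the law of $(X_\eps, X_{1-\eps})$ under $R^{\nu,x,y}$ therefore has density $(a,b)\mapsto r^\nu_\eps(x,a)\,r^\nu_{1-2\eps}(a,b)\,r^\nu_\eps(b,y)/r^\nu_1(x,y)$, while under $R^\nu$ it has density $r^\nu_{1-2\eps}$. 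By the Markov property, conditionally on $(X_\eps,X_{1-\eps})=(a,b)$, both $R^\nu$ and $R^{\nu,x,y}$ induce the same law for $X_{[\eps,1-\eps]}$, namely the Brownian bridge on $[\eps,1-\eps]$ joining $a$ to $b$. Hence the Radon--Nikodym derivative between $R^{\nu,x,y}_\eps$ and $R^\nu_\eps$ depends only on $(X_\eps, X_{1-\eps})$ and equals the ratio of the two joint densities above; the factor $r^\nu_{1-2\eps}(a,b)$ cancels, yielding
\begin{equation*}
\frac{\D R^{\nu,x,y}_\eps}{\D R^\nu_\eps}(\omega) \;=\; \frac{r^\nu_\eps(x,\omega_\eps)\, r^\nu_\eps(\omega_{1-\eps},y)}{r^\nu_1(x,y)},
\end{equation*}
which is \eqref{eq.bridge_sol_schro} with, for instance, $f^{\nu,x,y}_\eps(z) := r^\nu_\eps(x,z)/r^\nu_1(x,y)$ and $g^{\nu,x,y}_\eps(z) := r^\nu_\eps(z,y)$, both smooth and strictly positive on the compact torus.

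For \eqref{eq.entropy_bridge}, since $f^{\nu,x,y}_\eps$ and $g^{\nu,x,y}_\eps$ are bounded above and below by positive constants, $R^{\nu}_\eps$ and $R^{\nu,x,y}_\eps$ are equivalent measures and $\log f^{\nu,x,y}_\eps,\log g^{\nu,x,y}_\eps$ are bounded. I would then take the logarithm of the multiplicative decomposition
\begin{equation*}
\frac{\D Q_\eps}{\D R^\nu_\eps} \;=\; \frac{\D Q_\eps}{\D R^{\nu,x,y}_\eps}\, f^{\nu,x,y}_\eps(X_\eps)\, g^{\nu,x,y}_\eps(X_{1-\eps})
\end{equation*}
(valid whenever $Q_\eps \ll R^\nu_\eps$; in the opposite case both sides of \eqref{eq.entropy_bridge} are $+\infty$ by equivalence of the two reference measures) and integrate with respect to $Q_\eps$ to obtain the three announced terms. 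The only non-routine step is the use of the Markov property to identify the conditional law of $X_{[\eps,1-\eps]}$ under $R^{\nu,x,y}$ with that under $R^\nu$, but this is standard: conditioning $R^\nu$ on $(X_0,X_1)=(x,y)$ does not alter the Markov transitions inside $(0,1)$. Past this point, the lemma reduces to the heat-kernel computation above.
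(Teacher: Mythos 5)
Your proposal is correct and follows essentially the same route as the paper: factor the joint law of $(X_0,X_\eps,X_{1-\eps},X_1)$ through the heat kernel, use the Markov property to see that $R^\nu$ and $R^{\nu,x,y}$ share the same bridges over $[\eps,1-\eps]$ so that the Radon--Nikodym derivative depends only on $(X_\eps,X_{1-\eps})$, and then take logarithms of the resulting multiplicative decomposition to get \eqref{eq.entropy_bridge}. The only differences are cosmetic (the paper splits the normalizing constant $\tau^\nu_1(y-x)$ symmetrically between $f^{\nu,x,y}_\eps$ and $g^{\nu,x,y}_\eps$ via square roots), and your explicit treatment of the case $Q_\eps \not\ll R^\nu_\eps$, together with the remark that $\log f^{\nu,x,y}_\eps$ and $\log g^{\nu,x,y}_\eps$ are bounded, is if anything slightly more careful than the paper's computation.
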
 
\begin{Rem}
	The first point of the lemma implies that up to time restrictions, the Brownian bridge $R^{\nu,x,y}$ is the solution of the dynamical Schr\"odinger problem with respect to its own endpoints, see \cite[Theorem~3.3]{leo13}. 
\end{Rem}

The second lemma will let us associate to a solution $(\rho,c)$ to the continuity equation with $\H_\nu(\rho,c) < +\infty$ (recall that $\H_\nu$ is defined by formula \eqref{eq.def_Hnu}) a (Markov) process $Q$ satisfying $\nu H(Q | R^\nu) \leq \H_\nu(\rho,c)\, +$ endpoint terms, and whose density is $\rho$.
\begin{Lem}
	\label{lem.noisy_superposition_principle}
	Let $(\rho,c)$ be a solution to the continuity equation with:
	\begin{equation*}
	\H_{\nu}(\rho,c) < +\infty.
	\end{equation*}
	($\H_\nu$ is defined by \eqref{eq.def_Hnu}.) There exist $Q \in \P(C^0([0,1]; \T^d))$ such that:
	\begin{itemize}
		\item the entropy of $Q$ with respect to $R^\nu$ is given by:\footnote{We could check that our construction leads to a law $Q$ whose current velocity is $c$, and hence because of inequality~\eqref{eq.entropy_BB}, this inequality is in fact an equality. But as we will not need this fact, we will not prove it.}
		\begin{equation}
		\label{eq.entropy_converse_BB}
		\Hbar_\nu(Q) \leq \nu \frac{H(\rho_0| \Leb) + H(\rho_1 | \Leb)}{2} + \H_\nu(\rho,c) < +\infty,
		\end{equation}
		\item for all $t \in [0,1]$, $X_t {}\pf Q = \rho_t$.
	\end{itemize}
\end{Lem}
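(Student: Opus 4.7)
The candidate for $Q$ is the law of the Nelson-type forward diffusion
\begin{equation*}
\D X_t = \tilde b(t,X_t)\,\D t + \sqrt{\nu}\,\D B_t,\qquad X_0\sim \rho_0,
\end{equation*}
with drift $\tilde b := c + w$, where $w := \tfrac{\nu}{2}\nabla\log\rho$ is the osmotic velocity of the prescribed density. The hypothesis $\H_\nu(\rho,c)<+\infty$ immediately gives $c$ and $w$ in $L^2(\D t\otimes\rho)$, hence $\tilde b\in L^2(\D t\otimes\rho)$. Since $\rho w=\tfrac{\nu}{2}\nabla\rho$ one has $\Div(\rho w)=\tfrac{\nu}{2}\Delta\rho$, so combining with $\partial_t\rho+\Div(\rho c)=0$ shows that $(\rho,\tilde b)$ satisfies the Fokker--Planck PDE $\partial_t\rho+\Div(\rho\tilde b)=\tfrac{\nu}{2}\Delta\rho$ distributionally. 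This is exactly the Kolmogorov forward equation associated to the SDE above, which motivates why the Markov process driven by $\tilde b$ ought to have marginals $\rho_t$.

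To cope with the low regularity of $\tilde b$, the plan is to regularize and pass to the limit. Let $\kappa_\eta$ be a spatial mollifier and set $\rho^\eta := \rho*\kappa_\eta$, $c^\eta := ((\rho c)*\kappa_\eta)/\rho^\eta$, $w^\eta := \tfrac{\nu}{2}\nabla\log\rho^\eta$, and $\tilde b^\eta := c^\eta + w^\eta$. The pair $(\rho^\eta,c^\eta)$ still solves the continuity equation exactly (mollification is in space only and the equation is linear in $c$). Each $\tilde b^\eta$ is smooth and bounded, so the SDE driven by $\tilde b^\eta$ with initial law $\rho_0^\eta := \rho_0*\kappa_\eta$ has a unique strong solution $Q^\eta$, and classical uniqueness for the smooth Fokker--Planck equation yields $X_t{}\pf Q^\eta = \rho_t^\eta$ for all $t$. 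Girsanov's theorem against $R^\nu$ then gives
\begin{equation*}
\Hbar_\nu(Q^\eta) = \nu\, H(\rho_0^\eta\,|\,\Leb) + \frac{1}{2}\,\E_{Q^\eta}\!\left[\int_0^1 |\tilde b^\eta|^2\,\D t\right].
\end{equation*}
Expanding $|\tilde b^\eta|^2 = |c^\eta|^2 + 2\,c^\eta\!\cdot\! w^\eta + |w^\eta|^2$ and using the key identity $\int_0^1\!\int c^\eta\cdot\nabla\rho^\eta\,\D x\,\D t = H(\rho_1^\eta|\Leb) - H(\rho_0^\eta|\Leb)$, obtained by testing the continuity equation for $\rho^\eta$ against $1+\log\rho^\eta$, the cross term gets absorbed into endpoint entropies, yielding the clean identity
\begin{equation*}
\Hbar_\nu(Q^\eta) = \nu\,\frac{H(\rho_0^\eta\,|\,\Leb) + H(\rho_1^\eta\,|\,\Leb)}{2} + \H_\nu(\rho^\eta, c^\eta).
\end{equation*}

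Finally, the plan is to send $\eta\to 0$. Convexity of $\H_\nu$ as a function of $(\rho, \rho c)$ combined with contraction of the Fisher information under convolution (Stam's inequality) yields $\limsup_\eta \H_\nu(\rho^\eta, c^\eta) \leq \H_\nu(\rho,c)$, while $H(\rho_i^\eta|\Leb) \to H(\rho_i|\Leb)$ for $i = 0, 1$ by standard convergence of convolutions under finite relative entropy. Consequently $(\Hbar_\nu(Q^\eta))_\eta$ stays bounded, so by the entropy-tightness principle \cite[Lemma~6.2.12]{dem09} the family $(Q^\eta)_\eta$ is narrowly relatively compact. Any narrow cluster point $Q$ inherits the marginal identity $X_t{}\pf Q = \rho_t$ from $\rho_t^\eta\to\rho_t$, and satisfies \eqref{eq.entropy_converse_BB} by narrow lower semicontinuity of $P\mapsto H(P|R^\nu)$.

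The main obstacle will be the analytic control of the Fisher information along the mollification, namely the bound $\limsup_\eta \Fbf(\rho^\eta)\leq\Fbf(\rho)$, which relies on the classical but nontrivial monotonicity of the Fisher information under convolution, together with verifying that the DiPerna--Lions commutator implicit in the definition of $c^\eta$ does not inflate the kinetic term $\AA$ in the limit. Once these standard PDE and probability ingredients are granted, the remainder of the argument is bookkeeping.
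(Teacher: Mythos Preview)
Your argument is correct and follows essentially the same route as the paper: mollify $(\rho,c)$, solve the forward SDE with drift $c^\eta+w^\eta$ started from $\rho_0^\eta$, identify the marginals via uniqueness for the smooth Fokker--Planck equation, obtain the entropy identity, and pass to a narrow cluster point by compactness of the sublevels of $\Hbar_\nu$ and lower semicontinuity. The only cosmetic difference is that you reach the identity $\Hbar_\nu(Q^\eta)=\nu\,\tfrac{H(\rho_0^\eta|\Leb)+H(\rho_1^\eta|\Leb)}{2}+\H_\nu(\rho^\eta,c^\eta)$ by expanding $|c^\eta+w^\eta|^2$ and integrating the cross term against the continuity equation, whereas the paper invokes the symmetric Girsanov formula~\eqref{eq.entropie_girsanov_symetrique} together with Markovianity of $Q^\eta$.

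One simplification for the ``main obstacle'' you flag at the end: since $\rho^\eta w^\eta=\tfrac{\nu}{2}\nabla\rho^\eta=\tfrac{\nu}{2}(\nabla\rho)*\kappa_\eta=(\rho w)*\kappa_\eta$, the mollified osmotic velocity has exactly the same structure $w^\eta=((\rho w)*\kappa_\eta)/\rho^\eta$ as $c^\eta$, so $\nu^2\F(\rho^\eta)=\A(\rho^\eta,w^\eta)\le\A(\rho,w)=\nu^2\F(\rho)$ by the \emph{same} Jensen inequality that gives $\A(\rho^\eta,c^\eta)\le\A(\rho,c)$; likewise $H(\rho_i^\eta|\Leb)\le H(\rho_i|\Leb)$ by Jensen. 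All three bounds hold for each fixed $\eta$, so neither Stam's inequality nor any DiPerna--Lions commutator estimate is needed---the paper handles this in two lines.
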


We prove Theorem \ref{thm.lien_Bro_MBro} in the next subsection and postpone the proof of Lemma \ref{lem.time_restriction} to Subsection \ref{subsec.proof_lemma_time_restriction}, and the proof of Lemma \ref{lem.noisy_superposition_principle} to Subsection \ref{subsec.proof_lemma_noisy_superposition}.

In these proofs, we will have to build laws $P$ on $C^0([0,1]; \T^d)$ by \emph{concatenation}. The idea is the following. Let $a<b<c \in [0,1]$ be three given times, and $P_1,P_2$ be laws on $C^0([a,b]; \T^d)$ and $C^0([b,c]; \T^d)$ respectively. In the case when there is $x \in \T^d$ such that $P_1$-almost everywhere and $P_2$-almost everywhere, $X_b = x$, we will denote by:
\begin{equation*}
P_1 \otimes P_2 \in \P(C^0([a,c]; \T^d))
\end{equation*}
the product measure of $P_1$ and $P_2$ \emph{via} the identification:
\begin{equation*}
C^0([a,c]; \T^d) \cap \{X_b = x\} = \Big( C^0([a,b]; \T^d) \cap \{X_b = x\} \Big) \times \Big( C^0([b,c]; \T^d) \cap \{X_b = x\} \Big).
\end{equation*}
This construction is easily adapted when there are more than two laws.

Also, if $0<a<b<1$, and if $P$ is a law on $C^0([0,1]; \T^d)$, the conditional law:
\begin{equation*}
P\Big(  \bullet \Big| X_{[a,b]}\Big),
\end{equation*} 
which is well defined $P$-almost everywhere, will be seen as an element of:
\begin{equation*}
\P\Big(C^0([0,a]; \T^d) \times C^0([b,1]; \T^d)\Big).
\end{equation*}

\subsection{Proof of Theorem \ref{thm.lien_Bro_MBro} using Lemma \ref{lem.time_restriction} and Lemma \ref{lem.noisy_superposition_principle}}
Take $\gamma$, $P$, $\eps$ as in the statement of the theorem. Let us first check the entropy condition \eqref{eq.finite_entropy}. By the disintegration formula for the entropy \eqref{eq.entropie_additive} used with the map $X := (X_0, X_1)$, we have:
\begin{equation}
\label{eq.disintegration_0,1}
H( P|R^\nu ) = H\big( \gamma \big| (X_0,X_1)\pf R^\nu \big) + \E_P\Big[ H\big(P^{X_0,X_1} \big| R^{\nu, X_0, X_1}\big) \Big],
\end{equation}
where $(P^{x,y})$ and $(R^{\nu,x,y})$ are defined in \eqref{eq.def_Pxy} and \eqref{eq.def_Rnuxy} respectively. In particular, as all the entropies are nonnegative and as $H(P|R^\nu) < +\infty$, we have;
\begin{equation*}
P\mbox{-almost everywhere}, \qquad H\big(P^{X_0,X_1} \big| R^{\nu, X_0, X_1}\big) < +\infty,
\end{equation*}  
which exactly means that $H\big(P^{x,y} \big| R^{\nu, x, y}\big)< +\infty$ for $\gamma$-almost all $(x,y)$. Using formula \eqref{eq.entropie_diminue} with $X = X_{[\eps,1-\eps]}$, the restriction operator, we deduce that:
\begin{equation*}
\mbox{for }\gamma\mbox{-almost all }(x,y) , \qquad H\big(P^{x,y}_\eps \big| R^{\nu, x,y}_\eps \big) < +\infty.
\end{equation*} 
We conclude by estimating $H\big(P^{x,y}_\eps \big| R^{\nu, x,y}_\eps \big)$ with the help of formula \eqref{eq.entropy_bridge} of Lemma \ref{lem.time_restriction}, using the fact that $f^{\nu,x,y}_\eps$ and $g^{\nu,x,y}_\eps$ are bounded away from $0$.

Hence, we consider $(\rhorho, \cc)$ as in the statement of the theorem. We need to prove that this is a solution of $\MBro_\nu(\rhorho_{\eps}, \rhorho_{ 1-\eps})$ between the times $\eps$ and $1 - \eps$. So let us take an other competitor $(\widetilde{\rhorho}, \widetilde{\cc}) = (\widetilde{\rho}{}^{x,y}, \widetilde{c}{}^{x,y})_{(x,y) \in \T^d \times \T^d}$ for this problem (in particular, $(\widetilde{\rhorho}, \widetilde{\cc})$ is only defined between the times $\eps$ and $1-\eps$). We will build from $(\widetilde{\rhorho}, \widetilde{\cc})$ a competitor $Q$ for $\Bro_\nu(\gamma)$.

\paragraph{Construction of $\boldsymbol{Q}$.} For $(x,y) \in \T^d \times \T^d$, the following construction of $Q^{x,y}$ clearly makes sense $\gamma$-almost everywhere.
First we define:
\begin{equation*}
P_{\eps,0}^{x,y} := X_{[0,\eps]} {}\pf P^{x,y}, \qquad \mbox{and} \qquad P_{\eps,1}^{x,y} := X_{[1-\eps,1]} {}\pf P^{x,y}.
\end{equation*}
Then, we take $Q^{x,y}_\eps \in \P(C^0([\eps, 1-\eps]; \T^d))$ as given by Lemma \ref{lem.noisy_superposition_principle} from $(\widetilde{\rho}{}^{x,y}, \widetilde{c}{}^{x,y})$. We define $Q^{x,y}$ by concatenation:
\begin{equation}
\label{eq.def_Qxy}
Q^{x,y} := \int_{C^0([\eps, 1-\eps]; \T^d)} P_{\eps,0}^{x,y}(\bullet | X_\eps = \omega_\eps) \otimes \delta_{\omega} \otimes P_{\eps,1}^{x,y}(\bullet | X_{1-\eps} = \omega_{1-\eps}) \D Q^{x,y}_\eps(\omega).
\end{equation}
Finally, we define $Q$ by:
\begin{equation}
\label{eq.def_Q}
Q := \int Q^{x,y} \D \gamma(x,y).
\end{equation}

\paragraph{Marginal laws of $\boldsymbol{Q^{x,y}}$.} From formula \eqref{eq.def_Qxy}, we easily get:
\begin{align}
\notag X_{[\eps,1-\eps]} {}\pf Q^{x,y} &= \int X_{[\eps,1-\eps]} {}\pf\Big\{ P_{\eps,0}^{x,y}(\bullet | X_\eps = \omega_\eps) \otimes \delta_{\omega} \otimes P_{\eps,1}^{x,y}(\bullet | X_{1-\eps} = \omega_{1-\eps}) \Big\}\D Q^{x,y}_\eps(\omega)\\
\label{eq.marginal_middle}& = \int \delta_{\omega} \D Q^{x,y}_\eps(\omega) = Q^{x,y}_\eps.
\end{align}
In addition, as $X_\eps {}\pf Q^{x,y}_\eps = \rho^{x,y}_\eps = X_\eps {}\pf P_{0,\eps}^{x,y}$, we also have:
\begin{align}
\notag X_{[0,\eps]} {}\pf Q^{x,y} &= \int X_{[0,\eps]} {}\pf\Big\{ P_{\eps,0}^{x,y}(\bullet | X_\eps = \omega_\eps) \otimes \delta_{\omega} \otimes P_{\eps,1}^{x,y}(\bullet | X_{1-\eps} = \omega_{1-\eps}) \Big\}\D Q^{x,y}_\eps(\omega)\\
\notag & = \int  P_{\eps,0}^{x,y}(\bullet | X_\eps = \omega_\eps)  \D Q^{x,y}_\eps(\omega)\\
\label{eq.marginal_beginning}& = \int  P_{\eps,0}^{x,y}(\bullet | X_\eps = z)  \D \rho^{x,y}_\eps |_{t=\eps} (z) = P_{0,\eps}^{x,y}.
\end{align}
In the same way:
\begin{equation}
\label{eq.marginal_end}
X_{[1-\eps,1]} {}\pf Q^{x,y} = P_{1,\eps}^{x,y}
\end{equation}

\paragraph{The law $\boldsymbol{Q}$ is a competitor for Br\"odinger.}
First $Q^{x,y}$-almost all path joins $x$ to $y$, so that by \eqref{eq.def_Q}:
\begin{equation*}
(X_0,X_1)\pf Q = \gamma.
\end{equation*}

Let us check the incompressibility. From formulae \eqref{eq.marginal_middle}, \eqref{eq.marginal_beginning} and \eqref{eq.marginal_end}, we deduce that for all $t \in [0,1]$ and $\gamma$-almost all $(x,y) \in \T^d \times \T^d$:
\begin{equation*}
X_t {} \pf Q^{x,y} = \left\{  
\begin{aligned}
&\rho^{x,y}_t &&\mbox{if }t \in [0,\eps],\\
&\widetilde{\rho}{}^{x,y}_t &&\mbox{if }t \in [\eps,1-\eps],\\
&\rho^{x,y}_t &&\mbox{if }t \in [1-\eps,1].
\end{aligned}
\right.
\end{equation*}
Consequently, if $t \in [0,\eps] \cup [1-\eps, \eps]$, we have:
\begin{equation*}
X_t {}\pf Q = \int \rho^{x,y}_t \D \gamma(x,y) = \int X_t {}\pf P^{x,y} \D \gamma(x,y) = X_t {}\pf \int P^{x,y} \D \gamma(x,y) = X_t {}\pf P = \Leb,
\end{equation*}
because $P$, as the solution of $\Bro_\nu(\gamma)$, is incompressible and compatible with $\gamma$. If $t \in [\eps, 1-\eps]$,
\begin{equation*}
X_t {}\pf Q = \int \widetilde{\rho}^{x,y}_t \D \gamma(x,y) = \Leb,
\end{equation*}
because $(\widetilde{\rhorho}, \widetilde{\cc})$ is a competitor for $\MBro(\rhorho_\eps, \rhorho_{1-\eps})$ with $(\I, \m)=(\T^d \times \T^d, \gamma)$, and is hence incompressible. We conclude that $Q$ is a competitor for $\Bro_\nu(\gamma)$. In particular:
\begin{equation}
\label{eq.P_sol}
\Hbar_{\nu}(Q) = \nu H(Q | R^\nu) \geq \nu H(P | R^\nu) = \Hbar_{\nu}(P).
\end{equation}
From now on, the goal is to express the entropies $H(P | R^\nu)$ and $H(Q | R^\nu)$ in terms of $\HH_\nu(\rhorho, \cc)$ and $\HH_\nu(\widetilde{\rhorho}, \widetilde{\cc})$, and to use \eqref{eq.P_sol} to compare $\HH_\nu(\rhorho, \cc)$ and $\HH_\nu(\widetilde{\rhorho}, \widetilde{\cc})$.\footnote{\label{footnote.redution_time} With a slight abuse of notation, we still call $\H_\nu$ and $\HH_\nu$ the functionals defined by formulas \eqref{eq.def_Hnu}\eqref{eq.def_entropy_multiphase}, but only integrating between the times $\eps$ and $1-\eps$.}

\paragraph{Computation of the entropy of $\boldsymbol{P}$.}
We first compute $H(P | R^\nu)$. First, \eqref{eq.disintegration_0,1} can be rewritten:
\begin{equation}
\label{eq.disintegration_endpoints}
H(P | R^\nu) = H\big(\gamma \big| (X_0,X_1) \pf R^\nu) + \int H(P^{x,y} | R^{\nu,x,y}) \D \gamma(x,y).
\end{equation}
Then to compute $H(P^{x,y} | R^{\nu,x,y})$, we use the additive property of the entropy \eqref{eq.entropie_additive}, but this time with $X = X_{[\eps,1-\eps]}$. This leads for $\gamma$-almost all $(x,y)$ to:
\begin{equation}
\label{eq.disintegration_eps}
H(P^{x,y} | R^{\nu,x,y}) = H(P_{\eps}^{x,y} | R^{\nu,x,y}_\eps) + \E_{P^{x,y}} \Big[ H\Big(P^{x,y}(\bullet | X_{[\eps,1-\eps]}) \Big| R^{\nu,x,y}(\bullet | X_{[\eps,1-\eps]})\Big) \Big]
\end{equation}
We compute the first term thanks to formula \eqref{eq.entropy_bridge} of Lemma \ref{lem.time_restriction} and inequality \eqref{eq.entropy_BB}:
\begin{multline}
\label{eq.entropy_wrt_bridge}
\nu H(P_{\eps}^{x,y} | R^{\nu,x,y}_\eps) \geq \nu \frac{H(\rho_\eps^{x,y} | \Leb) + H( \rho_{1-\eps}^{x,y} | \Leb)}{2} \\- \nu \int \log f^{\nu,x,y}_\eps \D \rho_\eps^{x,y} - \nu \int \log g_\eps^{\nu,x,y}\D \rho_{1-\eps}^{x,y} + \H_\nu(\rho^{x,y}, c^{x,y}).
\end{multline}

\paragraph{An inequality for the second term.} On the other hand, as $R^{\nu,x,y}$ is Markovian. In particular, calling:
\begin{equation*}
R_{\eps,0}^{\nu,x,y} := X_{[0,\eps]} {}\pf R^{\nu,x,y}, \qquad \mbox{and} \qquad R_{\eps,1}^{\nu,x,y} := X_{[1-\eps,1]} {}\pf R^{\nu,x,y},
\end{equation*}
we have:
\begin{gather*}
X_{[0,\eps]} {}\pf R^{\nu,x,y}(\bullet | X_{[\eps,1-\eps]}) = R_{\eps,0}^{\nu,x,y}(\bullet | X_\eps),\\
R^{\nu,x,y}(\bullet | X_{[0,1-\eps]}) = R_{\eps,1}^{\nu,x,y}(\bullet|X_{1-\eps}).
\end{gather*}
Consequently, using \eqref{eq.entropie_additive} with $X = X_{[0, \eps]}$, we have $P^{x,y}$-almost surely:
\begin{align*}
&\quad H\Big(P^{x,y}(\bullet | X_{[\eps,1-\eps]}) \Big| R^{\nu,x,y}(\bullet | X_{[\eps,1-\eps]})\Big) \\
&= H\Big( X_{[0,\eps]} {}\pf P^{x,y}(\bullet | X_{[\eps,1-\eps]}) \Big| R^{\nu,x,y}_{\eps,0}(\bullet | X_\eps) \Big) + \E_{P^{x,y}(\bullet | X_{[\eps, 1-\eps]})}\Big[ H\Big( P^{x,y}(\bullet | X_{[0,1-\eps]}) \Big| R^{\nu,x,y}_{\eps,1}(\bullet|X_{1-\eps})\Big)\Big]\\
&= H\Big( X_{[0,\eps]} {}\pf P^{x,y}(\bullet | X_{[\eps,1-\eps]}) \Big| R^{\nu,x,y}_{\eps,0}(\bullet | X_\eps) \Big) + \E_{P^{x,y}}\Big[ H\Big( P^{x,y}(\bullet | X_{[0,1-\eps]}) \Big| R^{\nu,x,y}_{\eps,1}(\bullet|X_{1-\eps})\Big) \Big| X_{[\eps, 1-\eps]}\Big].
\end{align*}

Remark the following identities:
\begin{gather*}
\E_{P^{x,y}}\Big[ X_{[0,\eps]} {}\pf P^{x,y}(\bullet | X_{[\eps,1-\eps]}) \Big| X_\eps \Big] = P^{x,y}_{\eps,0}(\bullet|X_\eps), \\
\E_{P^{x,y}}\Big[ P^{x,y}(\bullet | X_{[0,1-\eps]}) \Big| X_{1-\eps} \Big] = P^{x,y}_{\eps,1}(\bullet|X_{1-\eps}).
\end{gather*}
Integrating the previous formula with respect to $P^{x,y}$ and using Jensen's inequality in the last line:
\begin{align}
\notag \E_{P^{x,y}} \Big[& H\Big(P^{x,y}(\bullet | X_{[\eps,1-\eps]}) \Big| R^{\nu,x,y}(\bullet | X_{[\eps,1-\eps]})\Big) \Big]\\
\notag &=\E_{P^{x,y}} \Big[  H\Big( X_{[0,\eps]} {}\pf P^{x,y}(\bullet | X_{[\eps,1-\eps]}) \Big| R^{\nu,x,y}_{\eps,0}(\bullet | X_\eps) \Big)  \Big]\\ 
\notag &\hspace{5cm}+ \E_{P^{x,y}} \Big[ H\Big(P^{x,y}(\bullet | X_{[0,1-\eps]}) \Big| R^{\nu,x,y}_{\eps,1}(\bullet|X_{1-\eps})\Big) \Big] \\
\notag &=  \E_{P^{x,y}} \Big[ \E_{P^{x,y}} \Big[  H\Big( X_{[0,\eps]} {}\pf P^{x,y}(\bullet | X_{[\eps,1-\eps]}) \Big| R^{\nu,x,y}_{\eps,0}(\bullet | X_\eps) \Big) \Big| X_\eps \Big] \Big] \\
\notag &\hspace{5cm}+  \E_{P^{x,y}} \Big[ \E_{P^{x,y}} \Big[ H\Big(P^{x,y}(\bullet | X_{[0,1-\eps]}) \Big| R^{\nu,x,y}_{\eps,1}(\bullet|X_{1-\eps})\Big) \Big| X_{1-\eps}\Big]\Big]\\
\label{eq.P_not_Markov_apriori} &\geq \E_{P^{x,y}}  \Big[  H\Big( P^{x,y}_{\eps,0}(\bullet|X_\eps) \Big| R^{\nu,x,y}_{\eps,0}(\bullet | X_\eps) \Big) \Big] +   \E_{P^{x,y}} \Big[ H\Big(P^{x,y}_{\eps,1}(\bullet|X_{1-\eps}) \Big| R^{\nu,x,y}_{\eps,1}(\bullet|X_{1-\eps})\Big) \Big].
\end{align}

\paragraph{The entropy of $\boldsymbol{Q}$.}
We can do the same computations for $Q$ instead of $P$. In that case:
\begin{itemize}
	\item The formulae \eqref{eq.disintegration_endpoints} and \eqref{eq.disintegration_eps} are exactly the same, replacing the letter $P$ by the letter $Q$.
	\item The inequality \eqref{eq.entropy_wrt_bridge} is in the other sense (because of formula \eqref{eq.entropy_converse_BB} of Lemma \ref{lem.noisy_superposition_principle}), and $\H_\nu(\rho^{x,y}, c^{x,y})$ is replaced by $\H_\nu(\widetilde{\rho}^{x,y}, \widetilde{c}^{x,y})$:
	\begin{multline}
	\label{eq.entropy_wrt_bridge_Q}
	\nu H(Q_{\eps}^{x,y} | R^{\nu,x,y}_\eps) \leq \nu \frac{H(\rho_\eps^{x,y} | \Leb) + H( \rho_{1-\eps}^{x,y} | \Leb)}{2} \\- \nu \int \log f^{\nu,x,y}_\eps \D \rho_\eps^{x,y} - \nu \int \log g_\eps^{\nu,x,y}\D \rho_{1-\eps}^{x,y} + \H_\nu(\widetilde{\rho}^{x,y}, \widetilde{c}^{x,y}).
	\end{multline}
	(Recall that $\widetilde{\rhorho}$ and $\rhorho$ coincide at time $t=\eps$ and at time $t=1-\eps$.)
	\item As thanks to \eqref{eq.def_Qxy}: 
	\begin{equation*}
	Q^{x,y}(\bullet | X_{[\eps,1-\eps]}) = P_{\eps,0}^{x,y}(\bullet | X_\eps) \otimes P_{\eps,1}^{x,y}(\bullet | X_{1-\eps}),
	\end{equation*}
	we get an equality in \eqref{eq.P_not_Markov_apriori}:
	\begin{align}
	\notag \E&_{Q^{x,y}}\Big[H\Big(Q^{x,y}(\bullet | X_{[\eps,1-\eps]}) \Big| R^{\nu,x,y}(\bullet | X_{[\eps,1-\eps]})\Big)\Big]\\
	\notag &= \E_{Q^{x,y}}\left[ H\Big( P_{\eps,0}^{x,y}(\bullet | X_\eps) \otimes P_{\eps,1}^{x,y}(\bullet | X_{1-\eps})  \Big| R^{\nu,x,y}(\bullet | X_{[\eps,1-\eps]})\Big)\right]\\
	\notag &= \E_{Q^{x,y}}  \Big[  H\Big( P^{x,y}_{\eps,0}(\bullet|X_\eps) \Big| R^{\nu,x,y}_{\eps,0}(\bullet | X_\eps) \Big) \Big] +   \E_{Q^{x,y}} \Big[ H\Big(P^{x,y}_{\eps,1}(\bullet|X_{1-\eps}) \Big| R^{\nu,x,y}_{\eps,1}(\bullet|X_{1-\eps})\Big) \Big]\\
	\label{eq.Q_Markov} &= \E_{P^{x,y}}  \Big[  H\Big( P^{x,y}_{\eps,0}(\bullet|X_\eps) \Big| R^{\nu,x,y}_{\eps,0}(\bullet | X_\eps) \Big) \Big] +   \E_{P^{x,y}} \Big[ H\Big(P^{x,y}_{\eps,1}(\bullet|X_{1-\eps}) \Big| R^{\nu,x,y}_{\eps,1}(\bullet|X_{1-\eps})\Big) \Big].
	\end{align} 
	(The third line follows easy computations using the Markov property:
	\begin{equation*}
	R^{\nu,x,y}(\bullet | X_{[\eps,1-\eps]}) = R^{\nu,x,y}_{\eps,0}(\bullet | X_\eps)\otimes R^{\nu,x,y}_{\eps,1}(\bullet|X_{1-\eps}),
	\end{equation*}
	and the last one follows from the fact that the marginals of $P^{x,y}$ and $Q^{x,y}$ coincide at time $t = \eps$ and $1-\eps$.)
\end{itemize}
Gathering the formulae \eqref{eq.disintegration_endpoints}, \eqref{eq.disintegration_eps} for $P$ and $Q$, and \eqref{eq.entropy_wrt_bridge}, \eqref{eq.P_not_Markov_apriori}, \eqref{eq.entropy_wrt_bridge_Q} and \eqref{eq.Q_Markov}, we get:
\begin{equation}
\label{eq.comparison_entropies_P_Q}
\Hbar_{\nu}(Q) - \Hbar_{\nu}(P) \leq \HH_\nu(\widetilde{\rhorho}, \widetilde{\cc}) - \HH_\nu(\rhorho, \cc).
\end{equation}

\paragraph{Conclusion.}
Using \eqref{eq.P_sol}, we get as announced:
\begin{equation*}
\HH_\nu(\widetilde{\rhorho}, \widetilde{\cc}) \geq \HH_\nu(\rhorho, \cc),
\end{equation*}
or in other terms, $(\rhorho, \cc)$ is the solution of $\MBro_\nu(\rhorho_\eps, \rhorho_{1-\eps})$.

Remark that in the specific case when $(\widetilde{\rhorho}, \widetilde{\cc}) = (\rhorho, \cc)$, we get $\Hbar_{\nu}(Q) \leq \Hbar_{\nu}(P)$, which is compatible with \eqref{eq.P_sol} if and only if $\Hbar_{\nu}(Q) = \Hbar_{\nu}(P)$. Hence, in that case, by uniqueness of minimizers in $\Bro_\nu(\gamma)$, $P=Q$. It means that inequalities \eqref{eq.entropy_wrt_bridge} and \eqref{eq.P_not_Markov_apriori} are in fact equalities. We recover the known fact that for $\gamma$-almost all $(x,y)$, $P^{x,y}$ is Markovian, see \cite[Section 3]{arn17}. \qed
\subsection{Proof of Lemma \ref{lem.time_restriction}}
\label{subsec.proof_lemma_time_restriction}
First of all, because the Markov property of the Brownian motion $R^\nu$, the laws $R^\nu_\eps$ and $R_\eps^{\nu,x,y}$ have the same bridges:
\begin{equation*}
R^{\nu,x,y}\mbox{-almost everywhere}, \quad R_\eps^{\nu,x,y}(\bullet | X_\eps, X_{1-\eps}) = R_\eps^\nu(\bullet | X_\eps,  X_{1-\eps}).
\end{equation*}
In particular, $R_\eps^{\nu,x,y} \ll R^\nu_\eps$ if and only if $(X_\eps, X_{1-\eps})\pf R_\eps^{\nu,x,y} \ll (X_\eps, X_{1-\eps})\pf R^\nu_\eps$, and in that case:
\begin{equation}
\label{eq.same_bridges}
\frac{\D R^{\nu,x,y}_\eps}{\D R^\nu_\eps\phantom{{}^{,x,y}}} = 	\frac{\D \, (X_\eps, X_{1-\eps})\pf R^{\nu,x,y}_\eps}{\D\, (X_\eps, X_{1-\eps})\pf R^\nu_\eps\phantom{{}^{,x,y}}}\circ (X_\eps, X_{1-\eps}).
\end{equation}

In this proof, we will call:
\begin{equation*}
R^\nu_{\eps, 1-\eps} := (X_\eps, X_{1-\eps})\pf R^\nu , \quad 	R^{\nu,x,y}_{\eps, 1-\eps}:=  (X_\eps, X_{1-\eps})\pf R^{\nu,x,y}, \quad R^{\nu}_{0,\eps,1-\eps,1} := (X_0,X_\eps, X_{1-\eps},X_1)\pf R^\nu.
\end{equation*}
Let $(\tau^\nu_s)_{s\geq 0}$ be the heat flow of diffusivity $\nu$ on the torus \emph{i.e.} the solution to:
\begin{equation*}
\left\{  
\begin{gathered}
\partial_s \tau^\nu_s = \frac{\nu}{2} \Delta \tau^\nu_s, \\
\tau^\nu_0 = \delta_0.
\end{gathered}
\right.
\end{equation*}
Since the Brownian motion $R^\nu$ is a Markov process of generator $\nu / 2 \Delta$, the density of $R^{\nu}_{0,\eps, 1-\eps, 1}$ has the following Radon-Nikodym derivative with respect to the measure $\Leb^{\otimes 4}$:
\begin{equation*}
\frac{\D R^{\nu}_{0,\eps, 1-\eps, 1} }{\D \Leb^{\otimes 4}}(a,b,c,d) = \tau^\nu_{\eps}(b-a) \times \tau^\nu_{1-2\eps}(c-b) \times \tau^\nu_\eps(d-c). 
\end{equation*}
So by classical results concerning the behaviour of Radon-Nikodym derivatives towards conditionings:
\begin{align}
\notag\frac{\D R^{\nu,x,y}_{\eps, 1-\eps} }{\D \Leb^{\otimes 2}}(b,c) &= \frac{\tau^\nu_{\eps}(b-x) \times \tau^\nu_{1-2\eps}(c-b) \times \tau^\nu_\eps(y-c) }{ \int \tau^\nu_{\eps}(b'-x) \times \tau^\nu_{1-2\eps}(c'-b') \times \tau^\nu_\eps(y-c') \D b' \D c' } \\
\label{eq.density_bridge} &= \frac{\tau^\nu_{\eps}(b-x) \times \tau^\nu_{1-2\eps}(c-b) \times \tau^\nu_\eps(y-c)}{\tau^\nu_1(y-x)}.
\end{align}
(The second equality is deduced from the semi-group property of $(\tau^\nu_s)$.) On the other hand, we have:
\begin{equation}
\label{eq.density_brownian} \frac{\D R^{\nu}_{\eps, 1-\eps} }{\D \Leb^{\otimes 2}}(b,c) = \tau^\nu_{1-2\eps}(c-b).
\end{equation}
Gathering formulae \eqref{eq.density_bridge} and \eqref{eq.density_brownian}, we get:
\begin{equation*}
\frac{\D R^{\nu,x,y}_{\eps, 1-\eps} }{\D R^{\nu}_{\eps, 1-\eps} }(b,c) = \frac{\tau^\nu_{\eps}(b-x) \times \tau^\nu_\eps(y-c)}{\tau^\nu_1(y-x)}.
\end{equation*}
Plugging this identity into \eqref{eq.same_bridges}, we get \eqref{eq.bridge_sol_schro} with:
\begin{equation*}
f^{\nu,x,y}_\eps(b) := \frac{\tau^\nu_{\eps}(b-x)}{\sqrt{\tau^\nu_1(y-x)}} \qquad \mbox{and} \qquad g_\eps^{\nu,x,y}(c) := \frac{\tau^\nu_{\eps}(y-c)}{\sqrt{\tau^\nu_1(y-x)}}.
\end{equation*}

Then, \eqref{eq.entropy_bridge} just follow from the following easy computations:
\begin{align*}
H(Q_\eps | R^\nu_\eps) &= \E_{Q_\eps}\left[ \log \left( \frac{\D Q_\eps}{\D R^\nu_\eps} \right)\right]\\
&= \E_{Q_\eps}\left[ \log \left(\frac{\D Q_\eps}{\D R^{\nu,x,y}_\eps}\right) \right] + \E_{Q_\eps}\left[ \log\left( \frac{\D R^{\nu,x,y}_\eps}{\D R^\nu_\eps} \right)\right]\\
&= H(Q_\eps | R^{\nu,x,y}_\eps) + \E_{Q_\eps}\left[ \log f^{\nu,x,y}(X_\eps) \right] + \E_{Q_\eps}\left[ \log g^{\nu,x,y}(X_{1-\eps}) \right].
\end{align*}
The result follows easily. \qed

\subsection{Proof of Lemma \ref{lem.noisy_superposition_principle}}
\label{subsec.proof_lemma_noisy_superposition}

The proof follows closely the one of \cite[Theorem 3.4]{ambrosio2014continuity}. We take $(\rho,c)$ as in the statement of the lemma and $(\tau_\eps)_{\eps>0}$ a convolution kernel, everywhere positive. For a given $\eps>0$, we define:
\begin{equation*}
\rho^{\eps} := \rho \ast \tau_{\eps} \quad \mbox{and} \quad c^{\eps} := \frac{(\rho c) \ast \tau_{\eps}}{\rho^{\eps}}.
\end{equation*}
With this definition, $(\rho^\eps, c^\eps)$ is clearly a solution to the continuity equation and the following inequality is classical (see formula (3.5) in \cite{ambrosio2014continuity} with $\Theta = |\bullet|^2/2$):
\begin{equation}
\label{eq.A_decreases}
\A(\rho^\eps, c^\eps) \leq \A(\rho, c).
\end{equation}
(Recall that $\A$ is defined by formula \eqref{eq.def_action}.) Moreover, calling:
\begin{equation*}
w := \frac{\nu}{2} \nabla \log \rho \quad \mbox{and} \quad w^\eps := \frac{\nu}{2} \nabla \log \rho^\eps,
\end{equation*}
we have:
\begin{equation*}
w^{\eps} := \frac{\nu}{2} \frac{\nabla \rho^{\eps}}{\rho^{\eps}} = \frac{\nu}{2} \frac{(\nabla \rho) \ast \tau_{\eps}}{\rho^{\eps}}= \frac{(w \rho) \ast \tau_{\eps}}{\rho^{\eps}},
\end{equation*}
which means that $w^\eps$ is obtained from $w$ in the same way as $c^\eps$ is obtained from $c$. In particular,
\begin{equation}
\label{eq.F_decreases}
\nu^2 \F(\rho^\eps) = \A(\rho^\eps, w^\eps) \leq \A(\rho,w) = \nu^2 \F(\rho).
\end{equation}
(Recall that $\F$ is defined by \eqref{eq.def_Fischer}.) Gathering \eqref{eq.A_decreases} and \eqref{eq.F_decreases}, we get:
\begin{equation}
\label{eq.Hnu_decreases}
\H_\nu(\rho^\eps, c^\eps) \leq \H_\nu(\rho,c).
\end{equation}

Finally, we also get easily:
\begin{equation}
\label{eq.H_decreases}
H(\rho^\eps_0 | \Leb) \leq H(\rho_0 | \Leb) \quad \mbox{and} \quad H(\rho^\eps_1 | \Leb) \leq H(\rho_1 | \Leb).
\end{equation}
We can suppose that the entropies $ H(\rho_0 | \Leb)$ and $H(\rho_1 | \Leb)$ are finite\footnote{In fact, it is always the case, because we could show that these quantities are controlled by $\H_\nu(\rho,c)$, see \cite[Remark A.3]{baradat2018small}.}, because if they are not, Lemma \ref{lem.noisy_superposition_principle} reduces to \cite[Theorem 3.4]{ambrosio2014continuity}.

At this level of regularity, we can define $Q^\eps$ the (unique) law of the solution to the stochastic differential equation:
\begin{equation*}
\D X_t = v^{\eps}(t,X_t) \D t + \nu \D B_t, 
\end{equation*}
starting from $\rho^\eps_0$, where $v^\eps := c^\eps + w^\eps$, and where $B$ is a standard Brownian motion. For $t \in[0,1]$, we call $\widetilde{\rho}{}^\eps_t := X_t {}\pf Q^\eps$, the density of $Q^\eps$ at time $t$. Because $(\rho^{\eps}, c^\eps)$ is a solution to the continuity equation, by definition of $v^\eps$:
\begin{equation*}
\partial_t \rho^\eps_t + \Div(\rho^\eps v^\eps) = \frac{\nu}{2} \Delta \rho^\eps.
\end{equation*}
But by a standard application of the It\^o formula, we also have: 
\begin{equation*}
\partial_t \widetilde{\rho}{}^\eps_t + \Div(\widetilde{\rho}{}^\eps v^\eps) = \frac{\nu}{2} \Delta \widetilde{\rho}^\eps.
\end{equation*}
Consequently, $\rho^\eps$ and $\widetilde{\rho}^\eps$ are two solutions to the same parabolic equation with smooth coefficients, and with the same initial condition. So they coincide.

Thanks to formula \eqref{eq.entropie_girsanov} (here $\overrightarrow{b_t} = v^\eps(t, X_t)$), $H(Q^\eps | R^\nu)< + \infty$. So by Theorem \ref{thm.follmer}, the osmotic velocity of $Q^\eps$ is $\nu/2 \nabla \log \rho^\eps$, and by \eqref{eq.def_courant_osmotique}, its current velocity at time $t$ is $Q^\eps$-almost everywhere:
\begin{equation*}
v^\eps(t,X_t) - \frac{\nu}{2} \nabla \log \rho^\eps(t, X_t) = c^\eps(t,X_t).
\end{equation*}
In particular, thanks to \eqref{eq.entropie_girsanov_symetrique} and \eqref{eq.drift_markov}, we have:
\begin{equation}
\label{eq.estim_HbarP}
\Hbar_{\nu}(Q^\eps) = \nu \frac{H(\rho^\eps_0 | \Leb) + H(\rho^\eps_1 | \Leb)}{2} + \H_\nu(\rho^\eps, c^\eps) \leq \nu \frac{H(\rho_0 | \Leb) + H(\rho_1 | \Leb)}{2} + \H_\nu(\rho,c),
\end{equation}
where the last inequality is obtained thanks to \eqref{eq.Hnu_decreases} and \eqref{eq.H_decreases}. But $\Hbar_{\nu}$ has compact sublevels for the topology of narrow convergence, so we can find $Q$ a limit point of $(Q^\eps)_{\eps>0}$. 

The density of $Q$ is clearly $\rho$ (the density of a law is continuous with respect to narrow convergence). By lower semi-continuity of $\Hbar_{\nu}$, passing to the limit in \eqref{eq.estim_HbarP}, we get:
\begin{equation*}
\Hbar_{\nu}(Q) \leq \nu\frac{H(\rho_0 | \Leb) + H(\rho_1 | \Leb)}{2} + \H_\nu(\rho,c).
\end{equation*}
Hence, the result. \qed

\section{Existence of the pressure in the standard problem \sf{Br\"o}}
\label{sec.existence_Bro}
We are now ready to prove Theorem \ref{thm.existence_pressure_process}. The structure of the proof is the same as the one of \ref{thm.existence_pressure_MBro}, so we only treat the details of the parts that differ.

Given a bistochastic $\gamma$ satisfying condition \eqref{eq.condition_existence_process} and $\varphi \in \mathcal{E}_0$ with compact support in $(0,1)\times \T^d$, we define a new problem prescribing the density $(1 + \varphi)$ instead of $\Leb$ in $\Bro_\nu(\gamma)$, as in Problem~\ref{pb.Mbro_density} in the case of $\MBro$. We call $\H{}^*_\nu(1 + \varphi)$ the optimal value of $\Hbar_{\nu}$ in this new problem. As in Lemma~\ref{lem.Hnustar_convex}, $\H{}^*_\nu$ is convex and lower semi-continuous for the topology of $\mathcal{E}_0$.

Let us prove that $\eps$ being fixed, there exists $C>0$\footnote{Here, $C$ may depend on the dimension, $\eps$, $\rhorho_\eps$, $\rhorho_{1-\eps}$ and $\nu$. Contrary to before, we do not follow its dependence with respect to $\nu$.} such that for all $\varphi \in \mathcal{E}$ with $\Norm(\varphi) \leq 1/2$ and whose support is included in $(\eps, 1-\eps) \times \T^d$,
\begin{equation*}
\Hbar_\nu*(1 + \varphi) \leq C,
\end{equation*}
where $C$ only depend on the dimension, $\eps$ and $(\rhorho_\eps, \rhorho_{1-\eps})$

We define $(P^{x,y}_\eps, \rho^{x,y}, c^{x,y})_{(x,y) \in \T^d \times \T^d}$ as given by Theorem \ref{thm.lien_Bro_MBro}. Recall that by Theorem \ref{thm.lien_Bro_MBro}, $(\rhorho, \cc) = (\rho^{x,y}, c^{x,y})_{(x,y) \in \T^d \times \T^d}$ is a solution of $\MBro_\nu(\rhorho_\eps, \rhorho_{1-\eps})$ between the times $\eps$ and $1-\eps$. Then, we build from $(\rhorho, \cc)$ a competitor $(\widetilde{\rhorho}, \widetilde{\cc}) = (\widetilde{\rho}{}^{x,y}, \widetilde{c}{}^{x,y})$ for $\MBro_\nu(1 + \varphi)$ as defined in Problem \ref{pb.Mbro_density}, between the endpoints $\rhorho_\eps$ and $\rhorho_{1-\eps}$, and between the times $\eps$ and $1 - \eps$, as in the proof of Lemma~\ref{lem.Hnu_bounded}. From this proof, we have:
\begin{equation}
\label{eq.estim_HH}
\HH_\nu(\widetilde{\rhorho}, \widetilde{\cc}) \leq C ,
\end{equation}
where $C$ does not depend on $\varphi$.

Finally, we consider $Q$, build from $(\widetilde{\rhorho}, \widetilde{\cc})$ as in the proof of Theorem \ref{thm.lien_Bro_MBro}. By \eqref{eq.comparison_entropies_P_Q}, we have:
\begin{align}
\label{eq.comparison_entropies_P_Q_H*}\Hbar{}^*_\nu(1 + \varphi) &\leq \Hbar_\nu(Q) = \Hbar_\nu(P) + \HH_\nu(\widetilde{\rhorho}, \widetilde{\cc}) - \HH_\nu(\rhorho, \cc) \\
\notag &\leq \Hbar_{\nu}(P) + \HH_\nu(\widetilde{\rhorho}, \widetilde{\cc}) && \mbox{because }\HH_\nu \geq 0,\\
\notag &\leq \Hbar{}^*_\nu(\Leb) + C  && \mbox{by \eqref{eq.estim_HH}},\\
\notag &\leq C,
\end{align}
taking a larger $C$, but still independent of $\varphi$ in the last line.

We conclude that for all $\eps \in (0,1/2)$, there exists $p_\eps \in \mathcal{D}'((\eps, 1-\eps) \times \T^d)$ such that for all $\varphi \in \mathcal{E}_0$ with compact support in $(\eps, 1-\eps) \times \T^d$,
\begin{equation*}
\H{}^*_\nu(1 + \varphi) \geq \H{}^*_\nu(\Leb) + \cg p_\eps, \varphi \cd_{\mathcal{E}_0', \mathcal{E}_0}
\end{equation*}

We deduce from formula \eqref{eq.comparison_entropies_P_Q_H*} that $p_\eps$ is the pressure field in $\MBro_\nu(\rhorho_\eps, \rhorho_{1-\eps})$, so that by Lemma \ref{lem.charact_p}, $p_\eps$ is unique, and given by formula \eqref{eq.formule_p_MBro}. At last, by footnote \ref{footnote.restriction_time} and formula \eqref{eq.formule_p_MBro}, if $\eps_1 < \eps_2$, then $p_{\eps_2}$ is the restriction of $p_{\eps_1}$ to the set of times $(\eps_2, 1-\eps_2)$. So we end-up with a unique distribution $p$ satisfying the properties announced in the statement of Theorem \ref{thm.existence_pressure_process}.
\qed
\section{A formal way to derive the equation for the pressure}
\label{section.formal_computations}

Recall that in the case of incompressible optimal transport, if $P$ is a solution, and if $p$ is its pressure field that we suppose to be sufficiently regular, then for all $\eps \in (0,1/2)$, $P$-almost all curve $\omega$ is a minimizer of the Lagrangian:
\begin{equation*}
\int_\eps^{1-\eps} \Big\{ \frac{|\dot{\omega}_t|}{2}^2 - p(t,\omega_t) \Big\} \D t
\end{equation*}
among the set of curves whose positions at time $\eps$ and $1-\eps$ are $\omega_\eps$ and $\omega_{1-\eps}$ respectively.

In the case of the Br\"odinger problem, if $P$ is the solution of $\Bro_\nu(\gamma)$ and if $p$ is its pressure field, the corresponding expected result would be as follows. For $\gamma$-almost all $(x,y)$, $P^{x,y}_\eps$ as defined by formula~\eqref{eq.def_Pxyeps} should be the solution of the Schr\"odinger problem in the potential $p$, aiming at minimizing:
\begin{equation*}
\nu H(P|R^\nu) - \E_P\left[  \int p(t,X_t) \D t \right]
\end{equation*}
between the times $\eps$ and $1-\eps$, under constraints $X_\eps {}\pf P = X_\eps {}\pf P^{x,y}$ and  $X_{1-\eps} {}\pf P = X_{1-\eps} {}\pf P^{x,y}$. But in that case, it is known (see for instance \cite[Section 4.B]{zambrini1986variational}) that calling $\rho^{x,y}$ the density of $P^{x,y}$, $c^{x,y}$ its current velocity, and $w^{x,y} := \nu/2 \nabla \log \rho^{x,y}$, then $(\rho^{x,y}, c^{x,y}, w^{x,y})$ solves the following equations:
\begin{equation*}
\left\{
\begin{gathered}
\partial_t \rho^{x,y} + \Div (\rho^{x,y}c^{x,y}) = 0,\\
\partial_t c^{x,y} + (c^{x,y} \cdot \nabla)c^{x,y} + (w^{x,y} \cdot \nabla) w^{x,y} + \frac{\nu}{2} \boldsymbol{\Delta} w^{x,y} = - \nabla p.
\end{gathered}
\right.
\end{equation*}
(The notation $\boldsymbol{\Delta} w^{x,y}$ stands for the Laplacian operator computed coordinate by coordinate.) The second equation is reminiscent of the classical one:\footnote{usually written under the form of a Hamilton-Jacobi equation for $\theta^{x,y}$ satisfying $c^{x,y} = \nabla \theta^{x,y}$, see \cite{benamou2017variational}.}
\begin{equation*}
\partial_t c^{x,y} + (c^{x,y} \cdot \nabla)c^{x,y} = - \nabla p
\end{equation*}
for the velocity field in optimal transport with potential, plus osmotic terms of order $\nu^2$. If we multiply this equation by $\rho^{x,y}$ and if we use the identities:
\begin{gather*}
\bDiv ( c^{x,y} \otimes c^{x,y} \rho^{x,y} ) = \rho^{x,y} (c^{x,y} \cdot \nabla ) c^{x,y} + c^{x,y} \Div(\rho^{x,y} c^{x,y}),\\
\bDiv ( w^{x,y} \otimes w^{x,y} \rho^{x,y} ) = \frac{\nu}{4}^2 \boldsymbol{\Delta} \nabla \rho^{x,y}  - \frac{\nu}{2} \rho^{x,y} \boldsymbol{\Delta} w^{x,y} - \rho^{x,y} (w^{x,y} \cdot \nabla) w^{x,y},
\end{gather*} 
we get the following equation for the momentum:
\begin{gather*}
\partial_t(\rho^{x,y} c^{x,y}) + \bDiv\Big( \{ c^{x,y} \otimes c^{x,y} - w^{x,y} \otimes w^{x,y} \} \rho^{x,y} \Big) + \frac{\nu}{4} \boldsymbol{\Delta} \nabla \rho^{x,y} \\
= - \rho^{x,y} \nabla p + c^{x,y} \cancel{(\partial_t \rho^{x,y} + \Div(\rho^{x,y} c^{x,y}))} = -\rho^{x,y} \nabla p.
\end{gather*}
If we integrate with respect to $\gamma$, because of incompressibility, the $\boldsymbol{\Delta} \nabla$ term cancels and the coefficient in front of $\nabla p$ becomes $1$. So we get:
\begin{equation*}
\partial_t \left( \int \rho^{x,y} c^{x,y} \D \gamma(x,y) \right) + \bDiv \left( \int \{ c^{x,y} \otimes c^{x,y} - w^{x,y} \otimes w^{x,y} \} \rho^{x,y} \D \gamma(x,y)  \right) = - \nabla p.
\end{equation*}
This is exactly formula \eqref{eq.formule_p_MBro} derived earlier, with $\I = \T^d \times \T^d$ and $\m = \gamma$, which is coherent with the fact that we observed in Theorem \ref{thm.lien_Bro_MBro} that $(\rhorho, \cc) = (\rho^{x,y}, c^{x,y})_{(x,y) \in \T^d \times \T^d}$ is the solution to $\MBro_\nu$ when localized in times, with respect to its own endpoints.

It is likely that just as in the incompressible optimal transport case, regularity estimates for the pressure field would make it possible to justify rigorously these computations, but we did not pursue in this direction.
\paragraph{Acknowledgments} I would like to thank Christian L\'eonard for the many fruitful discussions during my master thesis that he supervised and still after. I also would like to thank him for having shared with me the preliminary versions of \cite{arn17}. This work is part of my PhD thesis supervised by Yann Brenier and Daniel Han-Kwan and I also would like to thank both of them.

 \bibliography{../../../biblio/bibliographie}
 \bibliographystyle{plain}
  \end{document}